\DeclareMathOperator{\Ann}{Ann}
\DeclareMathOperator{\rank}{rank}
\DeclareMathOperator{\orb}{orb}
\DeclareMathOperator{\stab}{stab}
\def\Gaff{\widehat{\mathfrak g}} 
\def\G{\mathfrak g} 
\def\H{\mathfrak h} 
\def\Haff{\widehat{\mathfrak h}} 
\def\Naff{\widehat{\mathfrak n}} 
\def\Baff{\widehat{\mathfrak b}} 
\def\CG{\mathfrak{Cg}} 
\def\CN{\mathfrak{Cn}} 
\def\CH{\mathfrak{Ch}} 
\def\a{\alpha}
\def\l{\lambda}
\def\d{\delta}
\def\L{\omega}
\newtheorem*{cor}{Corollary}
\newtheorem*{lem}{Lemma}
\newtheorem*{prop}{Proposition}
\newtheorem{thm}{Theorem}
\theoremstyle{definition}
\newtheorem*{defn}{Definition}
\newtheorem*{thm*}{Theorem}
\theoremstyle{remark}
\newtheorem*{rem}{Remark}
\newtheorem*{warning}{Warning}
\newenvironment{pf}{\proof}{\endproof}
\newcounter{cnt}
\newenvironment{enumerit}{\begin{list}{{\hfill\rm(\roman{cnt})\hfill}}{%
\settowidth{\labelwidth}{{\rm(iv)}}\leftmargin=\labelwidth%
\advance\leftmargin by \labelsep\rightmargin=0pt\usecounter{cnt}}}{\end{list}} \makeatletter
\def\mydggeometry{\makeatletter\dg@YGRID=1\dg@XGRID=20\unitlength=0.003pt\makeatother}
\makeatother \theoremstyle{remark}
\numberwithin{equation}{section}
\let\bwdg\bigwedge
\def\bigwedge{{\textstyle\bwdg}}
\newcommand{\id}{\operatorname{id}}
\newcommand{\wt}{\operatorname{wt}}
\newcommand{\nc}{\newcommand}
\newcommand{\rnc}{\renewcommand}
\nc{\cal}{\mathcal} \nc{\goth}{\mathfrak} \rnc{\bold}{\mathbf}
\nc\bomega{{\mbox{\boldmath $\omega$}}} \nc\bpsi{{\mbox{\boldmath $\Psi$}}}
 \nc\balpha{{\mbox{\boldmath $\alpha$}}}
 \nc\bpi{{\mbox{\boldmath $\pi$}}}
 \nc\bvpi{{\mbox{\boldmath $\varpi$}}}
  \nc\bxi{{\mbox{\boldmath $\xi$}}}
\nc\bmu{{\mbox{\boldmath $\mu$}}} \nc\bcN{{\mbox{\boldmath $\cal{N}$}}} \nc\bcm{{\mbox{\boldmath $\cal{M}$}}} \nc\blambda{{\mbox{\boldmath
$\lambda$}}}\nc\bnu{{\mbox{\boldmath $\nu$}}}
\newcommand{\lie}[1]{\mathfrak{#1}}
\def\section{\def\@secnumfont{\mdseries}\@startsection{section}{1}%
  \z@{.7\linespacing\@plus\linespacing}{.5\linespacing}%
  {\normalfont\scshape\centering}}
\def\subsection{\def\@secnumfont{\bfseries}\@startsection{subsection}{2}%
  {\parindent}{.5\linespacing\@plus.7\linespacing}{-.5em}%
  {\normalfont\bfseries}}
 \nc{\Hom}{\operatorname{Hom}}
  \nc{\mode}{\operatorname{mod}}
\nc{\End}{\operatorname{End}} \nc{\wh}[1]{\widehat{#1}} \nc{\Ext}{\operatorname{Ext}} \nc{\ch}{\text{ch}} \nc{\ev}{\operatorname{ev}}
\nc{\Ob}{\operatorname{Ob}} \nc{\soc}{\operatorname{soc}} \nc{\rad}{\operatorname{rad}} \nc{\head}{\operatorname{head}}
\def\gr{\operatorname{gr}}
\def\ann{\operatorname{Ann}}
 \nc{\Cal}{\cal} \nc{\Xp}[1]{X^+(#1)} \nc{\Xm}[1]{X^-(#1)}
\nc{\on}{\operatorname} \nc{\Z}{{\bold Z}} \nc{\J}{{\cal J}} \nc{\C}{{\bold C}} \nc{\Q}{{\bold Q}}
\nc{\N}{{\bold N}}  \nc\boa{\bold a} \nc\bob{\bold b} \nc\boc{\bold c} \nc\bod{\bold d} \nc\boe{\bold e} \nc\bof{\bold f} \nc\bog{\bold g}
\nc\boh{\bold h} \nc\boi{\bold i} \nc\boj{\bold j} \nc\bok{\bold k} \nc\bol{\bold l} \nc\bom{\bold m} \nc\bon{\mathbb n} \nc\boo{\bold o}
\nc\bop{\bold p} \nc\boq{\bold q} \nc\bor{\bold r} \nc\bos{\bold s} \nc\boT{\bold t} \nc\boF{\bold F} \nc\bou{\bold u} \nc\bov{\bold v}
\nc\bow{\bold w} \nc\boz{\bold z} \nc\boy{\bold y} \nc\ba{\bold A} \nc\bb{\bold B} \nc\bc{\mathbb C} \nc\bd{\bold D} \nc\be{\bold E} \nc\bg{\bold
G} \nc\bh{\bold H} \nc\bi{\bold I} \nc\bj{\bold J} \nc\bk{\bold K} \nc\bl{\bold L} \nc\bm{\bold M} \nc\bn{\mathbb N} \nc\bo{\bold O} \nc\bp{\bold
P} \nc\bq{\bold Q} \nc\br{\bold R} \nc\bs{\bold S} \nc\bt{\bold T} \nc\bu{\bold U} \nc\bv{\bold V} \nc\bw{\bold W} \nc\bz{\mathbb Z} \nc\bx{\bold
x} \nc\KR{\bold{KR}} \nc\rk{\bold{rk}} \nc\het{\text{ht }}
\nc\toa{\tilde a} \nc\tob{\tilde b} \nc\toc{\tilde c} \nc\tod{\tilde d} \nc\toe{\tilde e} \nc\tof{\tilde f} \nc\tog{\tilde g} \nc\toh{\tilde h}
\nc\toi{\tilde i} \nc\toj{\tilde j} \nc\tok{\tilde k} \nc\tol{\tilde l} \nc\tom{\tilde m} \nc\ton{\tilde n} \nc\too{\tilde o} \nc\toq{\tilde q}
\nc\tor{\tilde r} \nc\tos{\tilde s} \nc\toT{\tilde t} \nc\tou{\tilde u} \nc\tov{\tilde v} \nc\tow{\tilde w} \nc\toz{\tilde z} \nc\woi{w_{\omega_i}}
\begin{document}
\nc\chara{\operatorname{Char}}


\title{Weyl modules for the hyperspecial current algebra}
\author{Vyjayanthi Chari}
\thanks{V.C. was partially supported by DMS-0901253 and DMS- 1303052}
\address{Department of Mathematics, University of California, Riverside, CA 92521}
\email{chari@math.ucr.edu}
\author{Bogdan Ion}
\thanks{B.I. was partially supported by  CNCS-UEFISCDI project PN II-ID-PCE-2011-3-0039.}
\address{Department of Mathematics, University of Pittsburgh, Pittsburgh, PA 15260}
\address{Algebra and Number Theory Research Center, Faculty of Mathematics and Computer Science, University of Bucharest,  14 Academiei St., Bucharest, Romania}
\email{bion@pitt.edu}
\author{Deniz Kus}
\address{Mathematisches Institut, Universit\" at zu K\" oln, Germany}
\email{dkus@math.uni-koeln.de}
\thanks{D.K. was partially supported by the “SFB/TR 12-Symmetries and
Universality in Mesoscopic Systems”.}
\date{\today}
\begin{abstract}
We develop the theory of global and local Weyl modules for the hyperspecial maximal parabolic subalgebra of type $A_{2n}^{(2)}$. We prove that the dimension of a local Weyl module depends only on its highest weight, thus establishing a freeness result for global Weyl modules. Furthermore, we show that the graded local Weyl modules are level one Demazure modules for the corresponding affine Lie algebra. In the last section we derive the same results for the special maximal parabolic subalgebras of the twisted affine Lie algebras not of  type $A_{2n}^{(2)}$. 
\end{abstract}
\maketitle
\section{Introduction}

The interest in the representation theory of current algebras naturally originated in the context of the representation theory of quantized enveloping algebras of affine Lie algebras. However, further investigations revealed that the category of graded representations of the current algebra with finite dimensional homogeneous components  has an appealing structure which we will describe in what follows. 

Let $\G$ be a simple complex Lie algebra. The current algebra $\G[t]$ associated to $\G$ is defined as the Lie algebra of polynomial maps $\bc\to \G$  and can be identified with the complex vector space $\G\otimes \bc[t]$ with Lie bracket the $\bc[t]$-bilinear extension of the Lie bracket on $\G$. This Lie algebra is naturally graded and the category of graded representations with finite dimensional homogeneous components is not semi-simple. In fact, it contains many well-known indecomposable reducible representations such as  the Kirillov-Reshetikhin modules and the $\G$-stable Demazure modules associated to positive level integrable representations of $\Gaff$, the untwisted affine Lie algebra corresponding to $\G$.  

In \cite{CP01}  the authors introduced and studied two important families of universal  highest weight indecomposable modules for the current algebra. Fix $\H$ a Cartan subalgebra of $\G$. The first family,  indexed by dominant integral weights $\lambda$ of $\G$,  are the  global Weyl modules $W(\lambda)$ which are cyclic on a highest weight vector $w_\lambda$.  These modules admit a compatible $ \lie g[t]$--grading and the homogenous components are finite--dimensional.  Moreover, 
in addition to being a left $\G[t]$-module, the global Weyl module $W(\lambda)$ admits a graded right module structure with respect to a  graded commutative algebra $\ba_\lambda$. This algebra is defined to be  the symmetric algebra of $\H[t]$ modulo the annihilator of the element $w_\lambda$ and it was shown that $\ba_\lambda$ is a polynomial algebra on finitely many generators depending on $\lambda$.

The second family of modules is  indexed by dominant integral weights $\lambda$ of $\G$ and maximal {\em{not necessarily graded ideals}}  $\bi$ in $\ba_\lambda$. These are called the  local Weyl modules and are denoted $W(\lambda,\bi)$ (see \cite{CFK10} for this formulation).  The algebra $\ba_\lambda$ has a unique graded maximal ideal and hence the  corresponding local Weyl module denoted $W(\lambda,\bi_{\lambda,0})$  is also graded.  The local Weyl modules are universal highest weight objects in the category of finite dimensional representations of $\G[t]$. The fundamental facts in this context -- proved in \cite{CP01} for $A_1$,   \cite{CL06} for $\G$ of type $A_n$, in \cite{FoL07} for $\G$ simply laced algebras of rank at least two, and in \cite{N12} for $\G$ not simply laced -- are the following: all the local Weyl modules with highest weight $\lambda$ have the same dimension. 
 For $\G$ simply-laced $W(\lambda,\bi_{\lambda,0})$ is  isomorphic to the $\G$--stable Demazure modules $D(-\lambda)$ associated to level one integrable representations of $\Gaff$ and extremal weight $-\lambda$. The situation for $\G$ non--simply laced case is more complicated, the graded local Weyl modules only admit a filtration by level one Demazure modules.
In any case, it  follows that  the global Weyl module $W(\l)$ is free as a $\ba_\lambda$-module with rank equal to the  dimension of the local Weyl modules $W(\lambda, \bi_{\lambda,0})$. 

More recently, efforts were made to develop a corresponding theory for twisted loop algebras and the so-called twisted current algebras $\G[t]^\sigma$.  We shall discuss this in some detail since it is relevant for the primary focus of this paper.
 The loop algebra $L(\lie g)$ is defined analogously to the current algebra by replacing $\bc[t]$ with $\bc[t,t^{-1}]$. The twisted loop algebra and the  twisted current algebra are   defined  to be  the fixed point subalgebras of $L(\lie g)$ and $\G[t]$ under an automorphism $\sigma$ induced from a non-trivial diagram automorphism of $\G$. The development of this theory for the twisted loop algebras is   is the following: the theory of local Weyl modules of such algebras was developed in \cite{CFS08}.   On the other hand, the theory of global Weyl modules  for twisted loop algebras is developed in \cite{FMS} {\em assuming that $\lie g$ is not of type $A_{2n}^{(2)}$}.  The theory of global Weyl modules for special kinds of  equivariant map algebras is developed in \cite{FMSa} but this work  is contingent on the assumption that a certain group action is free and therefore it cannot be used to study the twisted current algebras.  This work also  does not deal with the case of $A_{2n}^{(2)}$.
The final remark in this direction is that the isomorphism  between graded local Weyl modules for the twisted current algebras and level one Demazure modules for the associated twisted affine Lie algebra was  established in \cite{FK11} except  in the case of  $A_{2n}^{(2)}$, where only partial results are obtained.

There is a second point of view that leads to the construction of current algebras associated to affine Lie algebras $\Gaff$. From this point of view, the current algebra associated to $\Gaff$ is essentially defined as the special maximal parabolic subalgebra of $\Gaff$. Aside from $\Gaff$ of type $A_{2n}^{(2)}$, this point of view produces the usual current algebras and twisted current algebras. For $\Gaff$ of type $A_{2n}^{(2)}$ there are two conjugacy classes of special maximal parabolic subalgebras: one of them has distinguished properties and it is called hyperspecial. The algebra $\lie{sl}_{2n+1}[t]^\sigma$ studied in \cite{CFS08, FK11} corresponds to the special non-hyperspecial maximal parabolic subalgebra of type $A_{2n}^{(2)}$.

The goal of this paper is to develop the theory for  twisted current algebras  and it is now clear that the most  challenging  case is that of $A_{2n}^{(2)}$.   The main body of the paper is devoted to  the study of of global and local Weyl modules for its hyperspecial maximal parabolic subalgebra, which we also call current algebra to avoid deviating from the established terminology. In Section \ref{nine} we treat the other twisted current algebras corresponding to the maximal parabolic subalgebra of $\hat{\lie g}$; however, in the case of $\lie{sl}_{2n+1}[t]^\sigma$  the results of Section \ref{nine} are still incomplete since the dimension of the local graded Weyl modules is still not known in general. We remark here  that one of our motivations for studying the hyperspecial case is the relationship between the characters of global Weyl modules and Macdonald polynomials established in \cite{CI13}.

The study of the current algebras and twisted current algebras associated to affine Lie algebras  not of type $A_{2n}^{(2)}$ ultimately relies on the understanding of the representation theory of the current algebra $\lie{sl}_2[t]$ that is, up to isomorphism, the only rank one current subalgebras that can appear in such a situation. On the other hand, in studying the hyperspecial current algebra associated to the affine Lie algebra of type $A_{2n}^{(2)}$ one has to deal with a genuinely new phenomenon as one encounters and is bound to understand the rank one hyperspecial current algebra of type $A_2^{(2)}$  considered here for the first time. Furthermore, the hyperspecial current algebra is not realized in the same fashion as the other twisted current algebras which also contributes to the technical difficulties of this case.

Let us describe our results for the hyperspecial current algebra of type $A_{2n}^{(2)}$. We denote by $\G$ the homogeneous component of degree zero of the current algebra (a simple Lie algebra of type $A_1$ or $C_n$, $n\geq 2$, according to its rank), and by $\CG$ the current algebra itself. Our main results are the following; we refer to Section \ref{prelim} and Section \ref{reps} for the precise definition of the ingredients.

\begin{thm}\label{alambda}  
Let $\lambda$ be a dominant integral weight of $\G$. The algebra $\ba_\lambda$ is a graded polynomial algebra in  variables $T_{i,r}$ of grade $2r$, $1\le i\le n$, $1\le r\le\lambda(\alpha_i^\vee)$.
\end{thm}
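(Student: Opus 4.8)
The plan is the classical two-step strategy for highest weight algebras of this type (compare \cite{CP01, CL06} for untwisted current algebras and \cite{FMS} for twisted loop algebras), with the rank-one analysis carried out from scratch. First I would produce a surjective homomorphism of graded algebras
\[
\C\big[\,T_{i,r}\ :\ 1\le i\le n,\ 1\le r\le\lambda(\alpha_i^\vee)\,\big]\ \twoheadrightarrow\ \ba_\lambda,\qquad T_{i,r}\mapsto T_{i,r},
\]
and then prove that it is injective; since the left-hand algebra is a graded polynomial algebra with $T_{i,r}$ in grade $2r$, this gives the theorem. Both steps are organized around the abelian Cartan current $\CH$ — recall $\ba_\lambda=\mathrm{Sym}(\CH)/\Ann(w_\lambda)$, that $\mathrm{Sym}(\CH)$ is generated by $\H$, which acts on $w_\lambda$ through the character $\lambda$, together with the grade-$2r$ parts of $\CH$ ($r\ge 1$), and that these latter can equivalently be replaced by the elements $T_{i,r}\in\mathrm{Sym}(\CH)$, $r\ge 1$, built from them as in \secref{reps} — and around the rank-one current subalgebras $\CG_i\subseteq\CG$ attached to the simple roots $\alpha_i$ of $\G$.

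For surjectivity one must show that $T_{i,r}$ kills $w_\lambda$ whenever $r>\lambda(\alpha_i^\vee)$, since then $\ba_\lambda$ is already generated by the images of the remaining $T_{i,r}$. Restricting the action to $\CG_i$, the submodule $U(\CG_i)w_\lambda$ of $W(\lambda)$ is a quotient of the rank-one global Weyl module of highest weight $\lambda(\alpha_i^\vee)$ — the relations $\CN_i^+w_\lambda=0$, $hw_\lambda=\lambda(h)w_\lambda$ for $h\in\H$, and $(f_i)^{\lambda(\alpha_i^\vee)+1}w_\lambda=0$ all persist — so it is enough to treat the rank-one case. There $\CG_i$ is either (a suitably graded) $\lie{sl}_2[t]$, for which the vanishing is the classical computation of \cite{CP01}, or the rank-one hyperspecial current algebra of type $A_2^{(2)}$. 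In the latter case one must pin down the precise defining relations of the global Weyl module and then establish the appropriate Garland-type commutation identities in $U(\CG_i)$ — expressing each $T_{i,r}$, modulo the left ideal generated by the positive currents, through divided powers of the generators — from which the vanishing for $r>\lambda(\alpha_i^\vee)$ follows. This case, the rank-one algebra of type $A_2^{(2)}$ being genuinely new and not a current algebra of $\lie{sl}_2$, is the technical heart of the argument and the main obstacle.

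For injectivity it suffices to produce, for a Zariski-dense set of maximal ideals of the polynomial algebra, a nonzero finite-dimensional cyclic $\CG$-module of highest weight $\lambda$ whose highest weight line carries the corresponding character of $\CH$: such a module is a quotient of $W(\lambda)$, so its $\CH$-character factors through $\ba_\lambda$, and density then forces the kernel of the surjection above to vanish. These modules I would build as tensor products of evaluation-type representations — pullbacks of the fundamental $\G$-modules $V(\varpi_i)$ along evaluation homomorphisms $\CG\to\G$ at pairwise distinct parameters, with the weights distributed so as to add up to $\lambda$ — and a direct computation of the $\CH$-action on the highest weight line should give, for $T_{i,r}$, the $r$-th elementary symmetric function (up to a normalization) in the parameters assigned to $\varpi_i$; as the parameters vary these characters sweep out a dense subset. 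The subtlety here, also flagged in the introduction, is that $\CG$ is not presented as a fixed-point subalgebra of $\G[t]$, so the evaluation homomorphisms — and the verification that such tensor products are cyclic — have to be set up directly from the realization of $\CG$ inside $\Gaff$; once that is in place, the remaining computations are routine.
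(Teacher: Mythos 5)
Your overall strategy --- a surjection from a polynomial algebra on Garland-type generators, with surjectivity coming from the vanishing of the generators above $\lambda(\alpha_i^\vee)$ and injectivity from a large family of finite-dimensional highest-weight quotients of $W(\lambda)$ --- is exactly the strategy of \secref{five}, and your first step is sound. The paper's elements $P_{i,r}$ play the role of your $T_{i,r}$: the change of variables between the $h_{i,r\delta}$ and the $P_{i,r}$, and the fact that $\bu(\CH_+)$ is a polynomial algebra on the $P_{i,r}$, is \lemref{garl} (for $i=n$ the required $A_2^{(2)}$ Garland identity is quoted from \cite{F-V98} and \cite{CFS08} rather than reproved, so your plan to establish it from scratch is more work than the paper does, but the content is the same), and the vanishing $P_{i,r}w_\lambda=0$ for $r>\lambda(\alpha_i^\vee)$ is \eqref{pir}, obtained from $x_{-\alpha_i}^{\lambda(\alpha_i^\vee)+1}w_\lambda=0$ essentially as you indicate.

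The genuine gap is in your injectivity step: the modules you propose do not exist. In the realization of $\CG$ inside $L(\lie{sl}_{2n+1})$ (\secref{realization}), evaluation at a point $z\neq 0$ restricted to $\CG$ is already surjective onto all of $\lie{sl}_{2n+1}$ (\lemref{generalev} with $k=N=1$), not onto $\G$; the only evaluation with target $\G$ is $\ev_0$, and any module pulled back through $\ev_0$ is annihilated by $\CH_+$, hence only realizes the augmentation character. So there is no family of "evaluation homomorphisms $\CG\to\G$ at pairwise distinct nonzero parameters" along which one could pull back the fundamental $\G$-modules $V(\varpi_i)$, and this is not a matter of setting the maps up more carefully from the realization of $\CG$ in $\Gaff$ --- the twisted structure forces the target at $z\neq 0$ to be the big algebra $\lie{sl}_{2n+1}$. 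The paper's substitute is the module $\ev_0^*V(\lambda_0)\otimes\Psi_{z_1,1}^*V_{2n+1}(\lambda_1)\otimes\cdots\otimes\Psi_{z_k,1}^*V_{2n+1}(\lambda_k)$, built from irreducible $\lie{sl}_{2n+1}$-modules pulled back along the surjections $\Psi_{z_s,1}:\CG\to\lie{sl}_{2n+1}$; on the highest weight line the generating series of the $P_{i,r}$ is $(1-z^{-1}u)^{\lambda(H_i)}(1+z^{-1}u)^{\lambda(H_{2n+1-i})}$ (equation \eqref{genmu}; note that both $z$ and $-z$ enter, reflecting the twist), and combined with the coproduct formula of \lemref{tp} this realizes \emph{every} tuple of scalars $(a_{i,r})$, not just a dense set. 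After that replacement the argument closes as you describe; note also that cyclicity of the tensor product is not needed --- it suffices that the tensor product of highest weight vectors satisfies the defining relations of $W(\lambda)$, so the cyclicity issue you flag does not arise.
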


\begin{thm}\label{locWeyl=Dem} Let $\lambda$ be a dominant integral weight of $\G$. The graded local Weyl module $W(\lambda,\bi_{\lambda,0})$ and the Demazure module $D(-\l)$   are isomorphic as graded $\CG$-modules.
\end{thm}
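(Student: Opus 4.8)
The plan is to prove the two inclusions of graded $\CG$-modules
$$
W(\lambda,\bi_{\lambda,0}) \twoheadrightarrow D(-\l)
\quad\text{and}\quad
\dim W(\lambda,\bi_{\lambda,0}) \le \dim D(-\l),
$$
the second of which, combined with a surjection, forces an isomorphism. For the surjection I would first identify the highest weight line of $D(-\l)$ as an extremal weight space for the level one integrable $\Gaff$-module $V(\Lambda)$ (with $\Lambda$ chosen so that the extremal weight is $-\l$), check that the Demazure generator is annihilated by the same relations that present $W(\lambda,\bi_{\lambda,0})$ — in particular the defining relations coming from the ideal $\bi_{\lambda,0}$, which by \thmref{alambda} is generated by the $T_{i,r}$ in positive grade — and conclude by the universal property of the graded local Weyl module that there is a surjective graded $\CG$-map $W(\lambda,\bi_{\lambda,0}) \to D(-\l)$. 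The nontrivial point here is to verify that the relations defining the Demazure module, when restricted along $\CG \hookrightarrow \Gaff$, are exactly consequences of the current-algebra relations plus the vanishing of the $T_{i,r}$; this is where the precise form of the hyperspecial embedding enters and where the rank one case $A_2^{(2)}$ must be understood by hand, since the Demazure relations for the short and long imaginary/real root directions behave differently than in the untwisted or non-hyperspecial twisted cases.

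For the dimension bound I would run the standard upper-bound machinery: reduce to the graded local Weyl module, use the $\G$-module structure and a tensor-product / fusion-product argument to bound $\dim W(\lambda,\bi_{\lambda,0})$ by a product of dimensions of fundamental local Weyl modules, and then bound each fundamental local Weyl module by the corresponding fundamental level one Demazure module. Concretely, one establishes that $W(\lambda,\bi_{\lambda,0})$ is a quotient of a fusion product of the modules $W(\lambda(\alpha_i^\vee)\omega_i,\bi_{\cdot,0})$, and separately that each $D(-\l)$ admits a Demazure "flag" so that $\dim D(-\l)$ is at least the analogous product; the key input is a Demazure character / tensor-product formula at level one for the affine algebra of type $A_{2n}^{(2)}$. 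The main obstacle I expect is precisely this last step: establishing the needed upper bound on $\dim W(\lambda,\bi_{\lambda,0})$ in a way that matches $\dim D(-\l)$ on the nose. One must find enough relations in the local Weyl module to cut it down to the Demazure dimension, and the hyperspecial geometry means the usual "one short root $\lie{sl}_2$" reduction is replaced by a reduction to the rank one hyperspecial current algebra of type $A_2^{(2)}$, whose local Weyl modules must be computed explicitly first (this is the genuinely new computation flagged in the introduction).

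Once both a surjection $W(\lambda,\bi_{\lambda,0}) \to D(-\l)$ and the inequality $\dim W(\lambda,\bi_{\lambda,0}) \le \dim D(-\l)$ are in hand, the map is a graded $\CG$-isomorphism, proving the theorem; as a byproduct one records that all local Weyl modules $W(\lambda,\bi)$ have the same dimension $\dim D(-\l)$, which feeds back into the freeness of $W(\lambda)$ over $\ba_\lambda$. I would organize the argument so that the rank one case is isolated as a preliminary lemma, the fusion-product upper bound and the Demazure lower bound are two parallel subsections, and the final paragraph is the two-line sandwich argument.
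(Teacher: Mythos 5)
There is a genuine gap, and it sits exactly where you put the main weight of the argument. The inequality $\dim W(\lambda,\bi_{\lambda,0})\le\dim D(-\l)$ cannot be obtained the way you propose: the fusion-product machinery goes in the opposite direction. What one gets for free is a surjection from the \emph{graded} local Weyl module onto the associated graded of a local Weyl module supported at distinct points (equivalently onto fusion products of smaller local Weyl modules), so $W(\lambda,\bi_{\lambda,0})$ surjects onto such fusion products and the product of fundamental dimensions is a \emph{lower} bound for $\dim W(\lambda,\bi_{\lambda,0})$ --- this is precisely how the paper proves its dimension inequality in Section~\ref{six}. Your claim that ``$W(\lambda,\bi_{\lambda,0})$ is a quotient of a fusion product of the modules $W(\lambda(\alpha_i^\vee)\omega_i,\bi_{\cdot,0})$'' is backwards, and without it the upper bound never materializes; obtaining an upper bound on the graded local Weyl module is the genuinely hard content of the theorem and is not available as ``standard machinery'' in the $A_{2n}^{(2)}$ hyperspecial setting.

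The paper's actual proof uses no dimension comparison at all. It takes the Joseph--Polo--Mathieu presentation of the Demazure module, rewrites it to exhibit $D(-\l)$ as a cyclic graded $\CG$-module whose generator is annihilated by the local Weyl relations \eqref{lWeylrels} together with finitely many extra relations $x_\a^{k_\a+1}$ for $\a\in\widehat R_{\rm re}(-)\setminus R^-$, and then proves (Proposition~\ref{higherrank}) that these extra relations already hold on $w_{\l,\bi_{\lambda,0}}$, by restricting to rank-one subalgebras: type $A_1^{(1)}$ pieces, where the statement is known, and type $A_2^{(2)}$ pieces, handled by the explicit Garland-type identities of \cite{F-V98} in Section~\ref{a22Dem}. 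Equality of the two annihilator ideals of cyclic generators then gives the isomorphism outright. Note that your own ``nontrivial point'' --- that the Demazure relations are consequences of the Weyl relations plus vanishing of the $T_{i,r}$ --- is exactly this step; but it is not needed for the surjection $W(\lambda,\bi_{\lambda,0})\twoheadrightarrow D(-\l)$ (only the trivial containment of relations is), and once you do prove it, the sandwich with dimensions is superfluous. So the salvageable version of your proposal collapses onto the paper's argument, while the dimension-bound half, as written, does not work.
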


\begin{thm}\label{mainthm} 
Let $\lambda$ be a dominant integral  weight of $\G$. For any maximal ideal $\bi$ of $\ba_\lambda$, we have  $$\dim W(\lambda,\bi)=\prod_{i=1}^n\binom{2n+1}{i}^{\lambda(\alpha_i^\vee)}.$$
\end{thm}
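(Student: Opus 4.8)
The plan is to bracket $\dim W(\lambda,\bi)$ between two bounds that meet, using \thmref{alambda} and \thmref{locWeyl=Dem} together with an upper-semicontinuity argument on $\operatorname{Spec}\ba_\lambda$; a byproduct will be that $W(\lambda)$ is free over $\ba_\lambda$. Write $d(\lambda):=\prod_{i=1}^n\binom{2n+1}{i}^{\lambda(\alpha_i^\vee)}$ and recall that $W(\lambda,\bi)\cong W(\lambda)\otimes_{\ba_\lambda}\ba_\lambda/\bi$ and that $W(\lambda)$ is a finitely generated graded $\ba_\lambda$-module. Hence $\bi\mapsto\dim W(\lambda,\bi)$ is the fibre dimension of a coherent sheaf on $\operatorname{Spec}\ba_\lambda$, so it is upper semicontinuous; by \thmref{alambda} it is also constant along the orbits of the contracting $\bc^\times$-action on $\operatorname{Max}\ba_\lambda\cong\bc^N$ ($N=\sum_i\lambda(\alpha_i^\vee)$) coming from the positive grading, and every orbit closure contains the unique fixed point, the graded ideal $\bi_{\lambda,0}$. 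Therefore $\dim W(\lambda,\bi)\le\dim W(\lambda,\bi_{\lambda,0})$ for every maximal ideal $\bi$.

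Next I would prove the lower bound $\dim W(\lambda,\bi)\ge d(\lambda)$ for $\bi$ in a dense subset of $\operatorname{Max}\ba_\lambda$. A point in general position corresponds to a choice, for each colour $i\in\{1,\dots,n\}$, of $\lambda(\alpha_i^\vee)$ pairwise distinct nonzero scalars; list all of these as $a_1,\dots,a_N$, with $a_j$ of colour $i_j$. Evaluation at a nonzero scalar $a$ realizes $\mathfrak{sl}_{2n+1}$ (a simple Lie algebra of type $A_{2n}$) as a quotient of $\CG$, through which the $i$-th fundamental representation $\bigwedge^i\bc^{2n+1}$ pulls back to an evaluation module $V_i(a)$ of dimension $\binom{2n+1}{i}$, cyclically generated by a highest weight vector of weight $\varpi_i$. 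Since the joint evaluation $\CG\to\bigoplus_{j=1}^N\mathfrak{sl}_{2n+1}$ at the distinct points $a_j$ is surjective, $\bigotimes_{j=1}^N V_{i_j}(a_j)$ is generated by the tensor product of these highest weight vectors, a vector of weight $\lambda$; hence this tensor product is a quotient of $W(\lambda,\bi)$ for the corresponding $\bi$, so $\dim W(\lambda,\bi)\ge\prod_{j=1}^N\binom{2n+1}{i_j}=d(\lambda)$ on this dense set. By upper semicontinuity, $\dim W(\lambda,\bi)\ge d(\lambda)$ for \emph{every} $\bi$, and in particular $\dim W(\lambda,\bi_{\lambda,0})\ge d(\lambda)$.

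It then remains to prove the matching upper bound $\dim W(\lambda,\bi_{\lambda,0})\le d(\lambda)$. By \thmref{locWeyl=Dem} this is the same as $\dim D(-\lambda)\le d(\lambda)$, which combined with the previous paragraph gives $\dim D(-\lambda)=d(\lambda)$. This is where the genuinely new ingredient enters, namely the rank one hyperspecial algebra of type $A_2^{(2)}$: for a fundamental weight one computes, from the explicit construction of the level one integrable module of $\Gaff$, that $\dim D(-\varpi_i)=\binom{2n+1}{i}$, and then that $D(-\lambda)$ is the fusion product of the modules $D(-\varpi_i)$, one factor for each of the $\lambda(\alpha_i^\vee)$ occurrences of $\varpi_i$, so that dimensions multiply. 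Equivalently, and probably more efficiently, one bounds $\dim W(\lambda,\bi_{\lambda,0})$ from above by extracting from the defining relations of $\CG$ — in particular from the relations attached to each rank one subalgebra of type $A_1$ and of type $A_2^{(2)}$ — a spanning set of cardinality $d(\lambda)$, by a PBW-straightening argument in the spirit of \cite{CL06}.

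Assembling the inequalities gives $d(\lambda)\le\dim W(\lambda,\bi)\le\dim W(\lambda,\bi_{\lambda,0})=d(\lambda)$ for every $\bi$, hence $\dim W(\lambda,\bi)=d(\lambda)$ for all $\bi$; consequently $W(\lambda)$ has constant fibre dimension over the reduced polynomial ring $\ba_\lambda$ and is therefore free of rank $d(\lambda)$. I expect the main obstacle to be the upper bound of the previous paragraph. Since, unlike the other twisted current algebras, $\CG$ is not the fixed point subalgebra of $\G[t]$ under a diagram automorphism, the translation automorphisms $t\mapsto t-a$ that reduce the single-point case to the graded case for an ordinary current algebra are not available; instead one has to understand the previously unstudied rank one algebra of type $A_2^{(2)}$ directly — its graded Weyl modules, the associated level one Demazure modules, and the fundamental evaluation modules used in the lower bound — and establish that each has the expected dimension.
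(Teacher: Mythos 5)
Your proposal has the same overall skeleton as the paper's proof: sandwich $\dim W(\lambda,\bi)$ between a lower bound coming from pulled-back $\lie{sl}_{2n+1}$-modules and the upper bound $\dim W(\lambda,\bi_{\lambda,0})=\dim D(-\lambda)$ obtained from Theorem \ref{locWeyl=Dem} together with the Fourier--Littelmann factorization of $D(-\lambda)$ into fundamental Demazure modules (this is exactly Proposition \ref{Demtensor}, quoted from \cite{FoL06}, plus the Weyl dimension formula; your ``fusion product'' step is the same external input, and your alternative PBW-spanning argument is not carried out, but neither is it needed). Where you genuinely diverge is in how arbitrary maximal ideals are handled. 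The paper compares $W(\lambda,\bi)$ with the graded local Weyl module by passing to the associated graded module (Proposition \ref{dimineq}) and obtains the lower bound for every $\bi$ by an induction on $\sum_i\lambda(\alpha_i^\vee)$, using surjections onto $W(\mu,\bi_{\mu,0})\otimes\Psi^*_{\boz,N}W_{2n+1}(\bpi)$ (Propositions \ref{piw} and \ref{weylsl}) and the Chari--Loktev dimension formula, Theorem \ref{charietal}(i). You instead use upper semicontinuity of fibre dimension plus the contracting $\bc^\times$-action (which is the geometric form of the paper's $\gr$ argument) and get the lower bound at a dense set of points from tensor products of fundamental evaluation modules, extending it everywhere by closedness of $\{\dim\ge d(\lambda)\}$. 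This is a legitimate and in fact lighter route: it avoids Theorem \ref{charietal}(i) and the induction of Proposition \ref{dimineq}, needing only irreducible evaluation modules.

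Three points need repair or justification, all fixable from material in the paper. First, ``pairwise distinct nonzero scalars'' is not enough for surjectivity of the joint evaluation restricted to $\CG$: since $\CG$ sits inside $L(\lie{sl}_{2n+1})$ as a twisted subalgebra, evaluation at $a$ and at $-a$ are not independent, and Lemma \ref{generalev} requires the points to be distinct up to sign; the set of such tuples is still dense, so the fix is cosmetic. Second, your phrase ``the corresponding $\bi$'' and the claim that these ideals form a dense subset of $\operatorname{Max}\ba_\lambda$ presuppose an identification of the evaluation parameters with the eigenvalues of the generators of $\ba_\lambda$ on the cyclic vector; this is the computation in \eqref{genmualt}, Lemma \ref{tp} and \eqref{piwpi}, and it must be invoked (Theorem \ref{alambda} alone gives the polynomial structure but not this parametrization). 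Third, the semicontinuity argument needs $W(\lambda)$ to be a finitely generated $\ba_\lambda$-module, which you ``recall'' but which in this hyperspecial setting is a genuine statement, Proposition \ref{fin-gen}, whose proof requires the Garland-type relations for the rank-one $A_2^{(2)}$ subalgebras (Lemma \ref{relations}); it cannot simply be imported from the untwisted theory.
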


From Theorem \ref{alambda} and Theorem \ref{mainthm} we obtain the following.
\begin{thm}\label{freeness}
Let $\lambda$ be a dominant integral weight of $\G$. The global Weyl module $W(\lambda)$ is a free $\ba_\lambda$-module of rank equal to $$\prod_{i=1}^n\binom{2n+1}{i}^{\lambda(\alpha_i^\vee)}.$$
\end{thm}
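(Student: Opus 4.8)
The plan is to deduce \thmref{freeness} directly from \thmref{alambda} and \thmref{mainthm} by combining a freeness criterion for graded modules with the equidimensionality of local Weyl modules. First I would recall the standard module-theoretic setup: the global Weyl module $W(\lambda)$ carries a graded right $\ba_\lambda$-module structure, and for each maximal ideal $\bi$ of $\ba_\lambda$ the local Weyl module is $W(\lambda,\bi)=W(\lambda)\otimes_{\ba_\lambda}(\ba_\lambda/\bi)$; in particular $W(\lambda,\bi_{\lambda,0})=W(\lambda)\otimes_{\ba_\lambda}(\ba_\lambda/\bi_{\lambda,0})$ where $\bi_{\lambda,0}$ is the unique graded maximal ideal, which by \thmref{alambda} is the ideal generated by all the $T_{i,r}$. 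Each graded piece $W(\lambda)_s$ is a finitely generated module over the Noetherian graded algebra $\ba_\lambda$ (indeed finite-dimensional over $\bc$ in each internal $\G$-weight), so the machinery of graded Nakayama applies.

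The key step is a freeness criterion: a finitely generated graded module $M$ over a graded polynomial ring $\ba_\lambda=\bc[T_{i,r}]$ (positively graded, with $\ba_\lambda/\bi_{\lambda,0}\cong\bc$) is free if and only if the function $\bi\mapsto\dim_{\bc} M\otimes_{\ba_\lambda}\ba_\lambda/\bi$ is constant on $\Max(\ba_\lambda)$; equivalently, picking a homogeneous $\bc$-basis of $M\otimes_{\ba_\lambda}\ba_\lambda/\bi_{\lambda,0}$ and lifting to homogeneous elements of $M$ gives by graded Nakayama a surjection $\ba_\lambda^{\oplus N}\twoheadrightarrow M$ with $N=\dim_\bc M\otimes_{\ba_\lambda}\ba_\lambda/\bi_{\lambda,0}$, and this is an isomorphism precisely when the fiber dimension over every other maximal ideal also equals $N$ (the kernel, if nonzero, would be a nonzero finitely generated graded submodule of a free module over a domain and would force strictly smaller fiber dimension at a generic point by semicontinuity). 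Applying this to $M=W(\lambda)$: by \thmref{mainthm} we have $\dim W(\lambda,\bi)=\prod_{i=1}^n\binom{2n+1}{i}^{\lambda(\alpha_i^\vee)}$ for \emph{every} maximal ideal $\bi$, in particular this common value equals $\dim W(\lambda,\bi_{\lambda,0})$, so the fiber-dimension function is constant and $W(\lambda)$ is free of rank $\prod_{i=1}^n\binom{2n+1}{i}^{\lambda(\alpha_i^\vee)}$.

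One point requiring a little care, and the main obstacle, is handling the fact that $W(\lambda)$ is not finitely generated as an $\ba_\lambda$-module globally, only in each fixed $\G$-weight space $W(\lambda)_\mu$. The cure is to run the argument weight-space by weight-space: $W(\lambda)=\bigoplus_{\mu}W(\lambda)_\mu$ as right $\ba_\lambda$-modules (since $\ba_\lambda$ acts by $\G$-degree-zero operators), each $W(\lambda)_\mu$ is finitely generated over $\ba_\lambda$, and tensoring with $\ba_\lambda/\bi$ commutes with the direct sum, so the $\mu$-weight space of $W(\lambda,\bi)$ is $W(\lambda)_\mu\otimes_{\ba_\lambda}\ba_\lambda/\bi$. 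Applying the freeness criterion to each $W(\lambda)_\mu$ gives that it is free, and since $\dim W(\lambda)_\mu\otimes_{\ba_\lambda}\ba_\lambda/\bi$ is independent of $\bi$ (being the $\mu$-component of $\dim W(\lambda,\bi)$, whose \emph{total} value is $\bi$-independent by \thmref{mainthm}, and the argument at the single point $\bi_{\lambda,0}$ already pins down each graded/weight component), we assemble these to conclude $W(\lambda)=\bigoplus_\mu W(\lambda)_\mu$ is free over $\ba_\lambda$ of total rank $\sum_\mu \operatorname{rank}W(\lambda)_\mu=\dim W(\lambda,\bi_{\lambda,0})=\prod_{i=1}^n\binom{2n+1}{i}^{\lambda(\alpha_i^\vee)}$, as desired.
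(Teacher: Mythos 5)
Your route is the intended one: the paper deduces \thmref{freeness} from \thmref{alambda} and \thmref{mainthm} by exactly this kind of argument, and your freeness criterion is sound as you justify it (lift a homogeneous basis of the fiber at $\bi_{\lambda,0}$, get a graded surjection $\ba_\lambda^{\oplus N}\twoheadrightarrow M$ by graded Nakayama, and kill the kernel because a nonzero kernel inside a free module over the domain $\ba_\lambda$ would force the generic fiber dimension below $N$, contradicting constancy, since maximal ideals are dense in the open locally-free locus of a finitely generated $\bc$-algebra). Note also that $\bi_{\lambda,0}$ is itself a maximal ideal, so \thmref{mainthm} does cover the graded point, as you use.

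Two remarks on your last paragraph. First, the ``main obstacle'' you identify is not actually there: $W(\lambda)$ \emph{is} finitely generated as a right $\ba_\lambda$-module, because $\wt W(\lambda)$ is a finite set (a $W$-invariant subset of $\lambda-\bar Q^+$); this is precisely Proposition \ref{fin-gen} in \secref{six}. So you may apply your criterion directly to $M=W(\lambda)$ and skip the weight-by-weight reduction. Second, the weight-by-weight version as you wrote it has a gap: constancy of the total dimension $\dim W(\lambda,\bi)$ does not by itself give constancy of each $\dim W(\lambda)_\mu\otimes_{\ba_\lambda}\ba_\lambda/\bi$, and the parenthetical claim that ``the argument at the single point $\bi_{\lambda,0}$ already pins down each weight component'' is an assertion, not a proof. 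It can be repaired: by upper semicontinuity each $\mu$-fiber dimension is at least the generic rank of $W(\lambda)_\mu$; there are finitely many weights, so there is a maximal ideal lying in the common locally-free locus, where the sum of the $\mu$-fibers equals the sum of the generic ranks; since the total is the same constant at every maximal ideal, each summand must equal its generic rank everywhere. But the cleaner fix is simply to invoke Proposition \ref{fin-gen} and run your argument once, on $W(\lambda)$ itself.
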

In Section \ref{nine} we obtain, except for $\lie{sl}_{2n+1}[t]^\sigma$, the corresponding results for twisted current algebras: the  global Weyl module $W(\lambda)$ is free as a $\ba_\lambda$-module with rank the common dimension of the local Weyl modules $W(\lambda, \bi)$.

The validity of the freeness property for the global Weyl modules associated to $\CG$ points perhaps to the existence of a more subtle canonical context for such questions. To further stress this point, let us mention that the understanding that the hyperspecial standard maximal parabolic subalgebra of $A_{2n}^{(2)}$ is better behaved than the special standard maximal parabolic originated in \cite{CI13} where the main results of this paper are used. As explained in \cite{CI13}, for this particular choice of maximal parabolic a rather subtle BBG-type reciprocity property holds for the category of graded representations with finite dimensional homogeneous components,  connecting the global Weyl modules, local Weyl modules, the simple objects in the category, and their projective covers, and the characters of the local and global Weyl modules  are specialized Macdonald polynomials. The BGG reciprocity and the connection with Macdonald theory hold also for the current algebras associated to all the other idecomposable affine Lie algebras.


\section{Preliminaries}\label{prelim}
\subsection{} We denote the set of complex numbers by $\bc$ and, respectively, the set of integers, non--negative integers, and positive integers  by $\bz$, $\bz_+$, and $\bn$. Unless otherwise stated, all the vector spaces considered in this paper are $\bc$-vector spaces and $\otimes$ stands for $\otimes_\bc$.


\subsection{}
For a Lie algebra $\lie a$, we let $\bu(\lie a)$ be the universal enveloping algebra of $\lie a$.  If, in addition, $\lie a$ is $\mathbb Z_+$-graded, then $\bu(\lie a )$ acquires the unique compatible $\mathbb Z_+$-graded algebra structure. We shall be interested in $\mathbb Z$-graded  representations $V=\oplus_{r\in\mathbb Z} V[r]$ of  $\mathbb Z_+$-graded Lie algebras $\lie a=\oplus_{r\in\bz_+} \lie a[r]$. Clearly, $\lie a[0]$ -- the homogeneous  component of $\lie a$ of grade zero -- is a Lie subalgebra  of $\lie a$ and if $V$ is a $\mathbb Z$-graded representation, then every homogeneous component $V[r]$ is a $\lie a[0]$--module. A morphism between graded $\lie a$-representations is a grade preserving map of $\lie a$-modules.


\subsection{}  We refer to \cite{K90} for the general theory of affine Lie algebras. Throughout, $\widehat{A}$ will denote the indecomposable affine Cartan matrix of type $A_{2n}^{(2)}$, $n\geq 1$,  and $\widehat S$  will denote the corresponding Dynkin diagram with the labeling of vertices as in Table Aff2 from \cite[pg.54-55]{K90}. 
Let $A$ and $S$ be the Cartan matrix and Dynkin diagram (of type $A_1$ if $n=1$ and of type $C_n$ if $n\geq 2$) obtained from $\widehat S$ by dropping the zero node.

Let $\widehat{\lie g}$ and $\lie g$  be the  affine Lie algebra and the finite--dimensional  algebra associated to $\widehat A$ and $A$, respectively. We can and shall realize $\lie g$ as a subalgebra of $\widehat{\lie g}$. We fix $\lie h\subset \widehat{\lie h}$ Cartan subalgebras of $\lie g $ and respectively $\widehat{\lie g}$. We denote by $\widehat{R}$ and, respectively,  $R$ the set of roots of $\widehat{\lie g}$ with respect to $\widehat{\lie h}$, and the set of roots of $\lie g $ with respect to ${\lie h}$. We fix $\{\alpha_0,\dots,\alpha_n\}$  a basis  for $\widehat R$ such that $\{\alpha_1,\dots,\alpha_n\}$ is a basis for $R$. The corresponding sets of positive and negative roots are denoted as usual by $\widehat R^\pm$ and respectively $R^\pm$.    We fix  $d\in \widehat{\lie h}$ such that $\alpha_0(d)=1$ and $\alpha_i(d)=0$ for $i\neq 0$; $d$ is called the scaling element and it is unique modulo the center of $\widehat{\lie g}$. For $0\le i\leq n$, define  $\omega_i\in\Haff^*$ by $\omega_i(\a_i^\vee)=\d_{i,j}$, for $0\leq j\leq n$, and $\omega_i(d)=0$, where $\d_{i,j}$  is Kronecker's delta symbol. The element $\omega_i$ is the fundamental weight of $\Gaff$ corresponding to $\a_i^\vee$.

Let $R_\ell$ and $R_s$ denote  respectively the subsets of $R$ consisting of the long and short roots and denote by $R_\ell^\pm, R_s^\pm$ the corresponding subsets of positive and negative roots. For $n=1$, by convention, $R_\ell=R$ and $R_s=\emptyset$.

If $\d$ denotes the unique non-divisible positive imaginary root in $\widehat{R}$ then $\widehat{R}=\widehat{R}^+\cup\widehat{R}^-$, where $\widehat{R}^-=-\widehat{R}^+$, $\widehat R^+ =\widehat R^+_{\rm {re}}\cup \widehat R^+_{\rm{im}}$, and
$$\widehat R^+_{\rm{im}} =\mathbb N\delta,\quad \widehat{R}^+_{\rm{re}}= R^+\cup(R_s+\mathbb N\delta)\cup(R_\ell+2\mathbb N\delta)\cup\frac12(R_\ell+(2\mathbb Z_++1)\delta).$$ For $n=1$, by convention, $R_s+\mathbb N\delta=\emptyset$.

We also need to consider the set $$\widehat R_{\rm re}(\pm)=R^\pm\cup(R_s^\pm+ \bn\delta)\cup(R_\ell^\pm+2\mathbb N\delta)\cup\frac12(R_\ell^\pm+(2\mathbb Z_++1)\delta).$$
Remark that $\widehat R_{\rm re}(+)\cup \widehat R_{\rm re}(-)= \widehat{R}^+_{\rm{re}}\cup R^-$.


\subsection{} 

Let  $Q=\oplus_{i=1}^n \bz \a_i$ be the root lattice of $R$ and let $\bar{Q}=\oplus_{i=1}^{n-1} \bz \a_i\oplus \bz \frac 12\a_n$. The respective $\bz_+$-cones are  $Q^+=\oplus_{i=1}^n \bz_+ \a_i$ and $\bar{Q}=\oplus_{i=1}^{n-1} \bz_+ \a_i\oplus \bz_+ \frac 12\a_n$. The weight lattice of $R$ is denoted by $P$ and the cone of dominant weights is denoted by $P^+$. Note that $P=\bar{Q}$ but $P^+\neq \bar{Q}^+$. Let $\widehat W$ and $W$ be the Weyl groups of  $\widehat{\lie g}$ and $\lie g$, respectively.


\subsection{}
Given $\alpha\in \widehat R^+$ let $\widehat{\lie g}_\alpha\subset\widehat{\lie g} $ be the corresponding root space; note that  $\widehat{\lie g}_\alpha \subset \lie g$ if $ \alpha\in R$.  We  define several subalgebras of $\widehat{\lie g}$ that will be needed in what follows. Let $\widehat{ \lie b}$ be the Borel subalgebra corresponding to $\widehat{R}^+$, and let $\widehat{\lie n}^+$ be its nilpotent radical,
 $$\widehat{\lie b}=\widehat{\lie h}\oplus \widehat{\lie n}^+,\ \  \ \ \widehat{\lie n}^\pm =\oplus_{\alpha\in\widehat R^+}\widehat{\lie g}_{\pm \alpha}.$$ The subalgebras $\lie b$ and $\lie n^\pm$ of $\lie g$ are analogously  defined. 

 The \emph{hyperspecial} standard maximal parabolic subalgebra of $\widehat{\lie g}$ is the standard maximal parabolic subalgebra corresponding to the Dynkin diagram of ${\lie g}$.
 The hyperspecial standard maximal parabolic subalgebra contains the center of $\widehat{\lie g}$ as a direct factor and therefore we can split off the center and consider instead
$$\lie k=(\lie h\oplus\mathbb Cd)\oplus\widehat{\lie n}^+\oplus\lie n^-.$$ 
 
 The current algebra $\CG$ is defined to be the ideal of $\lie k$ defined as
 $$
 \CG=\lie h\oplus\widehat{\lie n}^+\oplus\lie n^-.
 $$
 \begin{rem} Our definition of the current algebra of type $A_{2n}^{(2)}$ is different from the notion of twisted current algebra of type  $A_{2n}^{(2)}$ that exists in the literature (for example, as in \cite{FK11}).
\end{rem}
\begin{warning}
The current algebra $\CG$ depends on the affine Lie algebra $\Gaff$ and not only on $\G$ as the notation suggests. In fact, associated to  $\G$ of type $A_1$ there are two current algebras, corresponding to the affine Lie algebras of type $A_1^{(1)}$ and $A_2^{(2)}$, and associated to $\G$ of type $C_n$, $n\geq 2$, there are three current algebras, 
corresponding to the affine Lie algebras of type $C_n^{(1)}$,  $A_{2n-1}^{(2)}$, and $A_{2n}^{(2)}$, each exhibiting different behaviour in some respects.
\end{warning}
 
 The current algebra  has a triangular decomposition $$\CG=\lie C\lie n^+\oplus \CH \oplus\lie C\lie n^-,$$ where     $$\CH=\CH_+\oplus \H, \ \ \ \CH_+=\bigoplus_{k>0}\widehat{\lie g}_{k\delta},\ \ \ \lie C\lie n^\pm=\bigoplus_{\alpha\in \widehat{R}_{\rm re}(\pm)}\widehat{\lie g}_{\pm\alpha}.$$ Note that $\CH$ is an abelian Lie subalgebra.


\subsection{}\label{ev0} 
 
 The element $d$ defines a $\mathbb Z_+$-graded Lie algebra structure on $\CG$: for $\alpha\in \widehat R$ we say that $\lie g_\alpha$ has grade $k$ if $$[d,x_\alpha]=kx_\alpha$$ 
 or, equivalently, if $\alpha(d)=k$. Remark that since $\delta(d)=2$ the eigenvalues of $d$ are all integers and if $\lie g_\alpha\subset\CG$, then the eigenvalues are non--negative integers.  With respect to this grading, the zero homogeneous component of the current algebra is  $\CG[0]= \lie g$ and the subspace spanned by the positive homogeneous components is an ideal denoted by $  \CG_+$. We have a short exact sequence of Lie algebras,
$$0\to \CG_+\to\CG \stackrel{\ev_0}{\longrightarrow} \lie g\to 0.$$
Note that this exact sequence is right-split but not left-split as a sequence of Lie algebras but it is a split sequence as a sequence of $\lie g$-modules.


 
\section{Current algebra representations} \label{reps}
\subsection{}

We start by recalling some standard notation and results  on the representation theory of $\lie g$ and then introduce the representations of $\CG$ that we will be concerned with. 

A $\lie g$--module $V$ is said to be a weight module if it is $\lie h$--semisimple, $$V=\oplus_{\mu\in\lie h^*}V_\mu,\ \ \ V_\mu=\{v\in V: hv=\mu(h)v,\ \ h\in\lie h\}.$$ Set $\wt V=\{\mu\in\lie h^*:V_\mu\ne 0\}$.  
Given $\lambda\in P^+$, let $V(\lambda)$ be the irreducible finite--dimensional $\lie g$--module generated by an element $v_\lambda$ with defining relations:$$\lie n^+ v_\lambda=0,\ \ \ hv_\lambda=\lambda(h)v_\lambda,\ \ \ x_{-\alpha}^{\lambda(\alpha^\vee)+1}v_\lambda=0,$$
for all $h\in\lie h$ and $\alpha\in R^+$.  We have $\wt V(\lambda)\subset\lambda- Q^+$. Any irreducible finite--dimensional  $\lie g$--module is isomorphic to $V(\lambda)$ for some $\lambda\in P^+$. 
{{Moreover, any locally finite--dimensional $\lie g$--module $V$ is isomorphic to a direct sum of modules $V(\mu)$, $\mu\in P^+$, and in particular $\wt V$ is a $W$--invariant subset of $\lie h^*$.}}

Since we have an inclusion of $\lie g\hookrightarrow\CG$ we can and do  regard all $\CG$--modules as $\lie g$--modules by restriction. In particular a weight module for $\CG$ is  one which is a weight module for $\lie g$. {Notice that if we regard $\CG$   as a $\lie g$--module via  the adjoint action then $\CG$ is a weight module and $\wt\CG\subset \bar Q$. A similar statement is true for the adjoint action of $\lie g$ on $\bu(\CG)$.}

\subsection{}  For $\lambda\in P^+$, let $P(\lambda)$ be the $\CG$ module induced from $V(\lambda)$, $$P(\lambda)= \bu(\CG)\otimes_{\lie g}V (\lambda).$$ Setting $p_\lambda=1\otimes v_\lambda$,   it is easily seen that $P(\lambda)$ is generated as $\CG$-module by the element $p_\lambda$ with the same relations as $v_\lambda$.  Moreover, if we regard $\bu(\CG_+)$ as a $\lie g$--module via the adjoint action, we have an isomorphism of $\G$-modules,  $$P(\lambda)\cong\bu(\CG_+)\otimes V(\lambda).$$
Hence  $P(\lambda)$ is a weight module and  $\wt P(\lambda)\subset \lambda-\bar Q$.
In addition, $P(\lambda)$  is a projective module in the category of $\CG$--modules which are locally finite--dimensional $\lie g$--modules.   If we declare the grade of $p_\lambda$ to be $r\in\mathbb Z$, then  $P(\lambda)$ is also a $\mathbb Z$--graded $\CG$--module and we denote this module by $P(\lambda,r)$.    A non--zero  graded component of $P(\lambda,r)$ has grade $m\ge r$ and the $r^{\rm th}$ graded piece is isomorphic to $V(\lambda)$ and  all graded components are finite--dimensional $\lie g$--modules. In particular this proves that $P(\lambda,r)$ and hence also $P(\lambda)$ is a sum  of finite--dimensional $\lie g$--modules.
 Denoting by $\ev_0^* V(\lambda)$ the $\CG$--module obtained by pulling back $V(\lambda)$ through the homomorphism $\ev_0: \CG\to\lie g$, we see that $\ev_0^* V(\lambda)$ is an irreducible quotient of  $P(\lambda)$ and that $P(\lambda)$ is the projective cover of $\ev_0^* V(\lambda)$ in the category of $\CG$--modules which are locally finite-dimensional $\lie g$--modules. 

\subsection{} Let $W(\lambda)$ be the maximal quotient of $P(\lambda)$ such that $\wt W(\lambda)\subset \lambda- \bar Q^+$.  Then $W(\lambda)$ is locally finite--dimensional and   the arguments of \cite{CFK10} prove that if we denote by $w_\lambda$ the image of $p_\lambda$, then $W(\lambda)$ is generated by the element $w_\lambda$ with  relations:$$\lie C\lie n^+ w_\lambda=0,\ \ \  hw_\lambda=\lambda(h)w_\lambda,\ \ \ x_{-\alpha}^{\lambda(\alpha^\vee)+1}w_\lambda=0,$$for all $h\in\lie h$ and $\alpha\in R^+$.  Again, it is clear that if we declare the grade of $w_\lambda$ to be $r$ then $W(\lambda)$ is a $\mathbb Z$--graded module which we denote as $W(\lambda,r)$. It can also be defined as   the maximal graded quotient of $P(\lambda,r)$ such that $\wt W(\lambda,r)\subset \lambda- \bar Q^+$.
Finally, note that  $\ev_0^* V(\lambda)$ is a quotient of $W(\lambda)$ proving that $W(\lambda)_\lambda\ne 0$.  The module $W(\l)$ is called a global Weyl module.

\subsection{} Since $\CH$ is an abelian Lie algebra and $[\CH_+,\lie C\lie n^+]\subset\lie C\lie n^+$, following \cite{CFK10}, one sees that there is well--defined  (graded)  right action of $\bu(\CH_+)$ on $W(\lambda)$ by:$$(yw_\lambda)u=yuw_\lambda,\ \ u\in \bu(\CH_+),\ \ y\in\bu(\CG).$$ This action commutes with the (graded) left action of $\bu(\CG)$ and makes $W(\lambda)$ a (graded) $(\CG,\CH_+)$--bimodule.
Set $$\ann(w_\lambda)=\{u\in \bu(\CH_+): uw_\lambda=w_\lambda u=0\}, $$ and observe that this is a (graded)  ideal in $\bu(\CH_+)$. Let  $\ba_\lambda$ be the corresponding  (graded)  quotient. The following result  is clear from the discussion.
\begin{prop}\label{elementary}  Let $\lambda\in P^+$. Then,
\begin{enumerit} \item[(i)]   The global Weyl module $W(\lambda)$ (resp. $W(\lambda,r)$, $r\in\mathbb Z$) is a $(\CG,\ba_\lambda)$-bimodule (resp. graded $(\CG,\ba_\lambda)$-bimodule). 
\item[(ii)]  For any $\mu\in P$ the subspace  $W(\lambda)_\mu$ is a right $\ba_\lambda$--submodule of $W(\lambda)$, and hence we have a decomposition $$W(\lambda)=\bigoplus_{\mu\in P} W(\lambda)_\mu,$$ as $(\lie h,\ba_\lambda)$--bimodules.
\item[(iii)] As an $\ba_\lambda$--module the subspace $W(\lambda)_\lambda=w_\lambda \ba_\lambda$ is free of rank one.
\end{enumerit}
 \hfill\qedsymbol\end{prop}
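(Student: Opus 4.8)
The plan is to deduce all three statements directly from the construction of the right $\bu(\CH_+)$-action recalled immediately above, together with the PBW theorem and a single small weight computation; none of this requires a new idea, the point being only to assemble the pieces.

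For (i), recall that the right $\bu(\CH_+)$-action already commutes with the left $\bu(\CG)$-action, so that $W(\lambda)$ is a $(\bu(\CG),\bu(\CH_+))$-bimodule; what remains is to check that the right action annihilates the ideal $\ann(w_\lambda)$. Since $W(\lambda)=\bu(\CG)w_\lambda$, an arbitrary element has the form $yw_\lambda$, and for $u\in\ann(w_\lambda)$ the bimodule identity gives $(yw_\lambda)\cdot u=y\cdot(w_\lambda\cdot u)=y\cdot(uw_\lambda)=0$, where $w_\lambda\cdot u=uw_\lambda$ is just the definition of the right action with trivial left factor. Hence the right action factors through $\ba_\lambda=\bu(\CH_+)/\ann(w_\lambda)$, which is the bimodule structure of (i). For the graded version one notes that $d$ makes $\CH_+$ positively graded, $\ann(w_\lambda)$ a graded ideal, and that $(yw_\lambda)\cdot u=yuw_\lambda$ raises degree by $\deg u$, so after the usual grade shift on $W(\lambda,r)$ the induced $\ba_\lambda$-action is by grade-preserving maps; this is pure bookkeeping.

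The heart of (ii) and (iii) is the observation that every element of $\CH_+=\bigoplus_{k>0}\widehat{\lie g}_{k\delta}$ has $\lie h$-weight zero. Indeed $\delta(\alpha_i^\vee)=0$ for all $0\le i\le n$ — a standard property of the imaginary root — so $\delta$ vanishes on $\lie h=\operatorname{span}\{\alpha_1^\vee,\dots,\alpha_n^\vee\}$, whence $\widehat{\lie g}_{k\delta}$ lies in the zero $\lie h$-weight space. Since $W(\lambda)$ is a weight module (it is locally finite-dimensional over $\lie g$, hence a direct sum of $V(\mu)$'s), the right $\bu(\CH_+)$-action, and therefore the $\ba_\lambda$-action, preserves each $W(\lambda)_\mu$; combined with the weight-space decomposition $W(\lambda)=\bigoplus_{\mu\in P}W(\lambda)_\mu$ of $\lie h$-modules, this gives (ii).

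For (iii), applying PBW to $\CG=\lie C\lie n^+\oplus\CH\oplus\lie C\lie n^-$, using $\lie C\lie n^+ w_\lambda=0$, $\lie h w_\lambda\subset\bc w_\lambda$, and the commutativity of $\bu(\CH)$, one gets $W(\lambda)=\bu(\lie C\lie n^-)\bu(\CH_+)w_\lambda$. Reading off the finite parts from the description of $\widehat R_{\rm re}(+)$, every nonconstant monomial in $\bu(\lie C\lie n^-)$ strictly lowers the $\lie h$-weight by a nonzero element of $\bar Q^+$, while $\bu(\CH_+)$ preserves weight; comparing $\lambda$-weight components therefore forces $W(\lambda)_\lambda=\bu(\CH_+)w_\lambda=w_\lambda\cdot\ba_\lambda$. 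Finally the map $\ba_\lambda\to W(\lambda)_\lambda$, $\bar u\mapsto w_\lambda\cdot\bar u=uw_\lambda$, is a surjection of right $\ba_\lambda$-modules, and it is injective precisely because $\ba_\lambda$ is by definition $\bu(\CH_+)$ modulo the annihilator of $w_\lambda$, so $W(\lambda)_\lambda$ is free of rank one. I expect no genuine obstacle: the only two points requiring any thought are the interchange of the left and right actions in (i) and the verification that $\bu(\lie C\lie n^-)$-strings cannot reach the $\lambda$-weight space in (iii); everything else is immediate from the definitions.
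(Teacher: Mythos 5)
Your proposal is correct, and it follows exactly the route the paper intends: the Proposition is stated with no written proof (it is declared clear from the preceding discussion of the right $\bu(\CH_+)$-action), and your argument is precisely the routine verification that discussion suggests — factoring the right action through $\ba_\lambda$ via $\ann(w_\lambda)$, using that $\CH_+$ has $\lie h$-weight zero (equivalently, that the right action commutes with the left $\lie h$-action) to preserve weight spaces, and PBW plus the pointedness of $\bar Q^+$ to identify $W(\lambda)_\lambda=\bu(\CH_+)w_\lambda\cong\ba_\lambda$.
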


\subsection{} Given a maximal ideal $\bi$ of $\ba_\lambda$, define the local Weyl module $W(\lambda, \bi)$ as $$W(\lambda, \bi)= W(\lambda)\otimes_{\ba_\lambda} \ba_\lambda/\bi.$$  Proposition \ref{elementary}(ii) shows that  $W(\lambda, \bi)$ is a weight module and we have $$W(\lambda, \bi)=\bigoplus_{\mu\in P} W(\lambda)_\mu\otimes_{\ba_\lambda} \ba_\lambda/\bi.$$  Proposition \ref{elementary}(iii) gives $$\dim W(\lambda, \bi)_\lambda =1,$$ and hence $W(\lambda,\bi)$ has a unique irreducible quotient $V(\lambda,\bi)$.   We denote the image of $w_\lambda$ in $W(\lambda, \bi)$ by $w_{\lambda,\bi}$.   Suppose that $\tilde{\bi}$ is the preimage of $\bi$ in $\bu(\CH_+)$. Then it is clear that $$W(\lambda,\bi)\cong W(\lambda)\otimes_{\bu(\CH_+)} \bu(\CH_+)/\tilde{\bi},$$ and hence $W(\lambda,\bi)$ is the quotient of $W(\lambda)$ obtained by imposing the additional relation,$$\tilde{\bi}w_{\lambda}=0.$$  The local Weyl modules are  graded if and only if the ideal $\tilde{\bi} $  or equivalently, $\bi$ is graded. We let $\tilde \bi_{\lambda,0}$ to be the augmentation  ideal in $\bu(\CH_+)$ and $\bi_{\lambda,0}$ to be its image in $\ba_\lambda$ in which case $W(\lambda,\bi_{\lambda,0})$ is graded and has  $\ev_0^* V(\lambda)$ as its graded quotient.
 Finally, we remark that 
 if $\lambda,\mu\in P^+$ and $\bi$ and $\bj$ are ideals in $\ba_\lambda$ and $\ba_\mu$ respectively, then $W(\lambda,\bi)\cong W(\mu,\bj)$ as $\CG$-modules if and only if  $\lambda=\mu$ and $\bi=\bj$. 

 
 \subsection{} \label{Demazuredefs}
 
Let $\widehat{V}(\omega_0)$ be  the unique irreducible  highest weight  $\Gaff$-module with highest weight $\omega_0$. For each $w\in \widehat{W}$ the weight space $\widehat{V}(\omega_0)_{w(\omega_0)}$ is one-dimensional. The $\Baff$-module generated by $\widehat{V}(\omega_0)_{w(\omega_0)}$  depends only on  $w(\omega_0)$ and is denoted by $D(w(\omega_0))$.  These modules  are  called level one Demazure modules and are finite dimensional $\Baff$-modules. 

The eigenspaces of the scaling element $d$ define a canonical $\bz$-grading  on the Demazure modules. Replacing $\omega_0$ with $\omega_0+s\frac{1}{2}\d$, $s\in\bz$,  produces modules  that differ only in the $d$ action: the grading of one is a shift  of the other. Since modulo $\delta$ we have $\widehat{W}(\omega_0)=P+\omega_0$, the level one Demazure modules are of the form $D(\lambda+s\frac{1}{2}\d+\omega_0)$ with $\lambda\in P$. To keep the notation as simple as possible, we will use $D(\lambda)$ to refer to $D(\lambda+\omega_0)$.

Although $D(\lambda)$ are by definition only $\Baff$-modules, for anti-dominant $\lambda$ they are admit a $\CG$-module structure. In this paper we will be concerned with the Demazure modules $D(-\lambda)$ for $\lambda\in P^+$.


\section{An explicit realization of the current algebra}\label{realization}

In this section we give an explicit construction of the algebra $\CG$ which will be used to prove our results. This can be found in \cite{Ca05} or \cite{K90}  but our exposition uses different notation and so we very briefly outline proofs for some of the statements. In the second half of this section we relate the local Weyl modules for $\CG$ to the local Weyl modules for the loop algebra of $\lie{sl}_{2n+1}$.

\subsection{}\label{kacrealize}  Let  $\lie{sl}_{2n+1}$ be the Lie algebra of trace zero $(2n+1)\times (2n+1)$ matrices with complex entries and  let $\lie h_{2n+1}$ be a fixed Cartan subalgebra.  We also fix  $\{\alpha(i): 1\le i\le 2n \}$ a basis for the root system of $\lie{sl}_{2n+1}$ with respect to $\lie h_{2n+1}$ and such that their labeling corresponds to the usual labeling for the Dynkin diagram of type $A_{2n}$. We denote by $\Phi^+=\{\alpha(i,j): 1\le i\le j\le 2n\}$ the corresponding set of positive roots where $\alpha(i,j)=\sum_{k=i}^j\alpha(k)$. We denote by $P_{2n+1}$ the weight lattice of $\lie{sl}_{2n+1}$ and by $P^+_{2n+1}$ the cone of dominant weights with respect to the fixed basis.

Let $\lie n^\pm_{2n+1}$ be the nilpotent subalgebra of $\lie{sl}_{2n+1}$ determined by $\pm\Phi^+$. 
We fix a Chevalley basis $\{X^\pm_{i,j}$, $H_i$ ~:~$1\le i\le j\le 2n\}$  for $\lie{sl}_{2n+1}$.
The assignment $$X^\pm_{i,i}\to X^\pm_{2n+1-i,2n+1-i}$$ extends to an algebra automorphism $\sigma$ of $\lie{sl}_{2n+1}$ and we set
\begin{align*}
x_0^\pm&=X^\mp_{1,n}-X^\mp_{n+1,2n},\\ 
x_i^\pm&= X^\pm_{i,i}+ X^\pm_{2n+1-i,2n+1-i},\ \ 1\le i\le n-1,\\ 
x_n^\pm&= X_{n,n+1}^\pm,\\
h_i&=H_i+H_{2n+1-i},\ \ 1\le i\le n.
\end{align*} 
It can be easily checked that the Lie algebra $\lie{sl}_{2n+1}$ is generated by $x_i^+$, $0\le i\le n$.

\subsection{} Let $\bc[t, t^{-1}]$ be the ring of Laurent polynomials in the  indeterminate $t$ and let $$L(\lie{sl}_{2n+1})=\lie{sl}_{2n+1}\otimes \bc[t, t^{-1}]$$ be the corresponding loop algebra with Lie bracket given by the $\bc[t, t^{-1}]$-bilinear extension of the Lie bracket of $\lie{sl}_{2n+1}$.

\begin{lem} The subalgebra of  $L(\lie{sl}_{2n+1})$ generated by the elements $x_i^\pm\otimes 1$, $1\le i\le n-1$ and $x_n^\pm\otimes t^{\pm 1}$ is isomorphic to $\lie g$.
\end{lem}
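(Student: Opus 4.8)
The plan is to exhibit an explicit isomorphism from $\lie g$ onto the subalgebra $\lie a$ of $L(\lie{sl}_{2n+1})$ generated by the displayed elements, using the Chevalley/Kac presentation of the affine Lie algebra and the fact (noted in Section~\ref{kacrealize}) that $\lie{sl}_{2n+1}$ is generated by $x_0^+,\dots,x_n^+$. First I would recall that $\sigma$ has order $2$ and that the map
$$
\sigma\otimes\tau:\ X\otimes t^m\ \longmapsto\ (-1)^m\,\sigma(X)\otimes t^m
$$
(where $\tau(t)=-t$) is an order-two automorphism of $L(\lie{sl}_{2n+1})$; its fixed-point subalgebra is, by the standard construction in \cite{K90}, a realization of the derived algebra of $\widehat{\lie g}$ of type $A_{2n}^{(2)}$ modulo the centre, i.e. (after discarding the center and the scaling element) an algebra containing a copy of $\lie g$. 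One checks directly that each generator $x_i^\pm\otimes 1$ ($1\le i\le n-1$) and $x_n^\pm\otimes t^{\pm1}$ lies in this fixed-point subalgebra: for $1\le i\le n-1$ the element $x_i^\pm=X_i^\pm+X_{2n+1-i}^\pm$ is $\sigma$-fixed and is tensored with $t^0$, while $x_n^\pm\otimes t^{\pm1}$ satisfies $\sigma(x_n^\pm)=X_{n+1,n}^{\pm}$, wait—rather, $\sigma$ swaps the two middle nodes so that $\sigma(x_n^\pm)=-x_n^\pm$ up to sign matching the factor $(-1)^{\pm1}=-1$, so these are $(\sigma\otimes\tau)$-fixed as well. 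Hence $\lie a$ is contained in the twisted fixed-point algebra.

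The key step is to verify that $x_i^{\pm}$ ($1\le i\le n-1$) together with $x_n^{\pm}\otimes t^{\pm1}$, and the resulting $h_i$'s (obtained as $[x_i^+,x_i^-]$, with $h_n=[x_n^+\otimes t,\,x_n^-\otimes t^{-1}]=[x_n^+,x_n^-]\otimes 1$), satisfy the Serre relations of $\lie g$ for the Cartan matrix $A$ of type $A_1$ (if $n=1$) or $C_n$ (if $n\ge2$). These are finite computations inside $L(\lie{sl}_{2n+1})$: the bracket $[x_i^+,x_j^-]$ for $i\ne j$ vanishes because the supports of the corresponding roots of $\lie{sl}_{2n+1}$ (after symmetrizing via $\sigma$) are disjoint except at the last node, where the $t$-grading forces $[x_{n-1}^+, x_n^-\otimes t^{-1}]$ and its relatives to have the correct structure; the diagonal brackets reproduce the entries of $A$ — in particular the short simple root $\alpha_n$ acquires its doubled reflection behaviour precisely from the $t^{\pm1}$ twist. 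I would then invoke the presentation of the simple Lie algebra $\lie g$ by generators and Serre relations to get a surjective homomorphism $\varphi:\lie g\twoheadrightarrow\lie a$.

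Finally, injectivity of $\varphi$: since $\lie g$ is simple, $\ker\varphi$ is either $0$ or all of $\lie g$, and it is not all of $\lie g$ because $\lie a\ne 0$ (it contains $x_n^+\otimes t$). Therefore $\varphi$ is an isomorphism and $\lie a\cong\lie g$. The main obstacle I anticipate is bookkeeping in the Serre-relation check at the node where the untwisted generators ($t^0$) meet the twisted one ($t^{\pm1}$): one must confirm that $\ad(x_{n-1}^\pm)^{?}$ applied to $x_n^\pm\otimes t^{\pm1}$ kills the right power dictated by the entry $A_{n-1,n}$ of the $C_n$ Cartan matrix (and symmetrically $A_{n,n-1}$), and that no unwanted elements of the form $X^\pm_{\ast}\otimes t^{\pm2}$ or central terms are produced — this is exactly where the $A_{2n}^{(2)}$ (rather than $A_{2n-1}^{(2)}$ or $C_n^{(1)}$) structure enters, and it is the point that deserves the explicit small-rank check referred to in the text.
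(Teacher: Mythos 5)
Your proposal is correct and follows essentially the same route as the paper: one checks that $x_i^\pm\otimes 1$ ($1\le i\le n-1$), $x_n^\pm\otimes t^{\pm1}$ and the resulting $h_j$ satisfy the Chevalley--Serre relations for the Cartan matrix $A$ (type $A_1$ or $C_n$, i.e.\ the Chevalley generators of $\lie{sp}_{2n}$), and then concludes via the Serre presentation together with simplicity of $\lie g$ for injectivity. The preliminary digression about the $(\sigma\otimes\tau)$-fixed subalgebra is not needed for this lemma (it belongs to the subsequent proposition) and its sign bookkeeping is a bit loose, but it does not affect the argument.
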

\begin{pf}  Recall that $\lie g$ is isomorphic to $\lie{sp}_{2n}$. The lemma is  now easily  verified by proving that the elements $x_i^\pm\otimes 1$, $x_n^\pm\otimes t^{\pm1 }$, $1\le i\le n-1$ and $h_j\otimes 1$, $1\le j\le n$ satisfy the same relations as the Chevalley generators for $\lie{sp}_{2n}$.
\end{pf}
Henceforth, we will identify $\G$ with the subalgebra of $L(\lie{sl}_{2n+1})$ described above. The following is proved in \cite[Chapter 8]{K90}. 
\begin{prop} The Lie subalgebra of $L(\lie{sl}_{2n+1})$ spanned by  the elements 
$$(X+(-1)^r\sigma(X))\otimes t^r,\ \  X\in\lie{sl}_{2n+1},\ \   r\in\mathbb{Z},$$
is isomorphic to $[\widehat{\lie g},\widehat{\lie g}]$ modulo its center.  Furthermore, $\CG$ is isomorphic to the subalgebra of $L(\lie {sl}_{2n+1})$ generated by 
$\G$
and $x_0^+$.
\end{prop}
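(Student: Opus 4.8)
The plan is to verify the two claims separately, both by exhibiting explicit generators and checking relations in $L(\lie{sl}_{2n+1})$. For the first claim, I would show that the span
$$
\lie k' = \bigoplus_{r\in\bz}\{(X+(-1)^r\sigma(X))\otimes t^r : X\in\lie{sl}_{2n+1}\}
$$
is a Lie subalgebra of $L(\lie{sl}_{2n+1})$, which is immediate from the identity $[X+(-1)^r\sigma(X), Y+(-1)^s\sigma(Y)] = [X,Y]+(-1)^{r+s}\sigma([X,Y])$ together with the fact that $\sigma$ is an order-two automorphism. This is precisely the fixed-point algebra of the order-two automorphism $\hat\sigma$ of $L(\lie{sl}_{2n+1})$ given by $X\otimes t^r\mapsto (-1)^r\sigma(X)\otimes t^r$, so by the standard Kac construction (\cite[Chapter 8]{K90}) it is the derived algebra $[\Gaff,\Gaff]$ modulo its center. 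I would cite this directly rather than reprove it, and only record that under this identification the grading by powers of $t$ matches the grading by $\d/2$-eigenvalues of $d$, with the $r$-th graded piece being the $+1$-eigenspace of $\sigma$ when $r$ is even and the $-1$-eigenspace when $r$ is odd.

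For the second claim, I would first identify, inside $\lie k'$, the images of the triangular pieces of $\CG$. The subalgebra $\G\subset L(\lie{sl}_{2n+1})$ from the previous lemma sits in the grade-zero part (generated by elements with no $t$), and I need to locate $\widehat{\lie n}^+$ and $\lie n^-$. The key computation is that $x_0^+ = X^-_{1,n}-X^-_{n+1,2n}$ lies in the grade-one piece of $\lie k'$ (it is of the form $X-\sigma(X)$ tensored with $t$, up to the sign conventions in the definition of $x_0^\pm$), and that together with $\G$ it generates, under repeated bracketing, all the root vectors $\widehat{\lie g}_\alpha$ for $\alpha\in\widehat R_{\mathrm{re}}(+)$ as well as the imaginary root spaces $\widehat{\lie g}_{k\d}$ for $k>0$. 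Concretely: bracketing $x_0^+$ with the grade-zero $x_i^-$'s moves it through the $\lie{sl}_{2n+1}$-weight lattice while keeping the $t$-degree at one, producing all of $R_s^+ + \bn\d$-type and $\tfrac12(R_\ell^+ + (2\bz_++1)\d)$-type root spaces at the first available degree; iterating and using that the grade-zero part is all of $\G$ (which itself already contains half the positive real roots, namely $R^+$) fills out the rest by induction on $\d$-degree. Since the triangular decomposition $\CG = \lie C\lie n^+\oplus\CH\oplus\lie C\lie n^-$ has $\CH$ in the $\G$-part and the off-diagonal pieces indexed by $\widehat R_{\mathrm{re}}(\pm)$, showing $\G$ and $x_0^+$ generate all these root spaces gives that they generate $\CG$; conversely every generator plainly lies in $\CG$, so the subalgebra they generate is exactly $\CG$.

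The main obstacle I anticipate is the bookkeeping in the second half: one has to be careful that the folding conventions in $x_0^\pm$, $x_i^\pm$, $x_n^\pm$ (in particular the swap of $+$ and $-$ superscripts in the definition of $x_0^\pm$, and the single versus doubled matrix entries) really do place $x_0^+$ in the correct affine root space $\widehat{\lie g}_{\a_0}$ with $\a_0(d)=1$, and that the inductive generation argument actually reaches \emph{every} real root space rather than a proper subalgebra — the twisted root system of $A_{2n}^{(2)}$ with its short-root-plus-$\bn\d$ and half-long-root strata is exactly where such arguments are easy to get slightly wrong. I would handle this by checking generation at the level of the Chevalley-type generators of $\Gaff$ given in \cite{K90}: it suffices to produce $x_0^-$ (the lowest-degree element of $\lie C\lie n^-$ corresponding to $-\a_0$) from $\G$ and $x_0^+$, since $\{x_i^\pm : 0\le i\le n\}$ together with $\lie h$ are the standard generators, and $x_0^-$ can be obtained as a suitable bracket of $x_0^+$ with grade-zero elements of $\G$ followed by a comparison of weights and degrees. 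Once $x_0^-$ is in hand the generation statement is the usual one for affine Lie algebras, restricted to the parabolic $\CG$, and the proof concludes.
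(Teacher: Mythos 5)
Your first half (citing the Kac realization for the fixed-point algebra of $X\otimes t^r\mapsto(-1)^r\sigma(X)\otimes t^r$) is what the paper does — it gives no proof beyond the reference to \cite[Chapter 8]{K90}. But your second half rests on a misreading of the embedding, and the error is not just bookkeeping. You identify the grading by powers of $t$ with the grading by eigenvalues of the scaling element $d$; in this realization these are genuinely different gradings. Indeed $\G$ is \emph{not} contained in the $t$-degree-zero part (its simple root vectors $x_n^\pm$ come tensored with $t^{\pm1}$), and $x_0^+=X^-_{1,n}-X^-_{n+1,2n}$ is tensored with $t^0$, not with $t$: it is a $\sigma$-fixed element sitting in $t$-degree zero, whose degree-zero subalgebra is of type $B_n$, while the $d$-grading (with $\alpha_0(d)=1$, $\alpha_i(d)=0$ for $i\ne0$) is a different derivation. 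This mismatch between the homogeneous ($t$-)grading and the hyperspecial grading is precisely the feature that makes this case delicate, and your argument assumes it away.

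More seriously, your proposed reduction — produce $x_0^-$ from $\G$ and $x_0^+$ and then invoke that $\{x_i^\pm:0\le i\le n\}$ together with $\lie h$ generate — cannot work and, if it did, would prove the wrong statement. The root space $\widehat{\lie g}_{-\alpha_0}$ is not contained in $\CG$: its $d$-grade is $-1$, equivalently $-\alpha_0\notin \widehat R_{\rm re}(-)$, so $x_0^-$ is not "the lowest-degree element of $\lie C\lie n^-$" — it lies outside $\CG$ altogether. Since $\G\subset\CG$ and $x_0^+\in\CG$ and $\CG$ is a subalgebra, the subalgebra they generate is contained in $\CG$ and can never contain $x_0^-$; had you obtained it, you would have generated all of $[\Gaff,\Gaff]$ modulo its center, contradicting the proposition itself. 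The correct reduction goes the other way and needs only \emph{positive} affine simple root vectors: write $\CG=\lie h\oplus\lie n^-\oplus\widehat{\lie n}^+$, note that $\lie h$ and $\lie n^-$ lie in $\G$, that $\widehat{\lie n}^+$ is generated by the simple root vectors $e_0,\dots,e_n$ of $\Gaff$, and that under the identification $e_i=x_{\alpha_i}\in\G$ for $1\le i\le n$ while $e_0=x_0^+$ (this last sign/root-space check, namely that $x_0^+$ spans $\widehat{\lie g}_{\alpha_0}$ with $\alpha_0=\tfrac12(\delta-\theta)$, $\theta$ the highest long root, is the only computation required); combined with the containment in $\CG$ noted above, this yields equality. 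Alternatively one can simply compare root spaces, as the paper does in the proof of Proposition \ref{basis}.
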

In what follows we will identify $\CG$ with the subalgebra of $L(\lie {sl}_{2n+1})$ described above.


\subsection{} \label{handh}

We shall adopt the following conventions. We identify the subalgebra $\lie h$ of $\lie g$ with  the subspace of $\lie h_{2n+1}$ spanned by the elements $h_i$, $1\le i\le n$; the simple roots $\alpha_i$, $1\le i\le n$ with  the set of simple roots determined by the elements $x_i^+$, $1\le i\le n$ and the elements $\alpha_i^\vee$ with $h_i$, $1\le i\le n$.
We shall regard   $\lie h^*$ as a subspace of $\lie{h}_{2n+1}^*$ by extending $\mu\in\lie h^*$ as follows
$$\mu(H_i)=\mu(h_i),\ \ 1\le i\le n,\qquad  \mu(H_i)=0,\ \ i>n.$$  
Via this identification, the set of fundamental weights for $\lie{sl}_{2n+1}$ contains the fundamental weights for $\lie g$, allowing us to denote by $\omega_i$, $1\le i\le 2n$, the fundamental weights  for $\lie{sl}_{2n+1}$. We also have $P\subset P_{2n+1}$ and $P^+\subset P_{2n+1}^+$.


\subsection{}  

Let $\alpha\in R^+_s$ and $r\in \bz_+$;  $\alpha$ is necessarily of one of the two forms listed below for some $1\le i\le j< n$. We set
\begin{equation*}
\begin{aligned}
&x_{\pm\alpha+r\delta}= X^\pm_{i,j}\otimes t^r+(-1)^{i+j}X^\pm_{2n+1-j,2n+1-i}\otimes (-t)^r, &&\text{for } \alpha=\sum_{s=i}^j\alpha_s,\\
&x_{\pm\alpha+r\delta}=X^{\pm}_{i,2n-j}\otimes t^{r\pm 1}+(-1)^{i+j}X^{\pm}_{j+1,2n+1-i}\otimes (-t)^{r\pm 1}, &&\text{for }  \alpha=\sum_{s=i}^{j}\alpha_s + 2\sum_{s=j+1}^{n-1}\alpha_s+\alpha_n.
\end{aligned}
\end{equation*}
Let  $\alpha\in R^+_\ell$ and $r\in \bz_+$;  $\alpha$ is of the form $2(\alpha_{i}+\cdots +\alpha_n)-\alpha_n$ for some $1\le i\le n$. We set
\begin{equation*}
\begin{aligned}
& x_{\pm\alpha+2r\delta}= X_{i,2n+1-i}^\pm \otimes t^{2r\pm 1}, \\
&x_{\pm\frac12(\alpha+(2r+1)\delta)}= X^\pm_{i,n}\otimes t^{(2r+1\pm 1)/{2}}+(-1)^{i} X^\pm_{n+1, 2n+1-i}\otimes (-t)^{(2r+1\pm 1)/2}. 
\end{aligned}
\end{equation*}
Finally, for $1\le i\le n$,  we set $$h_{i,r\delta}= H_i\otimes t^r+H_{2n+1-i}\otimes (-t)^r.$$ We remark that $\alpha_i^\vee=h_{i,0}$ for $1\le i\le n$.
\begin{prop}\label{basis} The  set 
$$
\{x_{\alpha},~ h_{i,s\delta}: \alpha\in \widehat{R}_{\rm{re}}(\pm), 1\le i\le n, s\in\bz_+\}
$$ 
is a basis for $\CG$.
\end{prop}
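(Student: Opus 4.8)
The plan is to verify that the listed set is contained in $\CG$, that it is linearly independent, and that it spans, with the last point following from a dimension/grading count combined with the triangular decomposition $\CG = \lie C\lie n^+ \oplus \CH \oplus \lie C\lie n^-$ established in Section \ref{prelim}. First I would check membership: each element $x_{\pm\alpha + r\delta}$, $x_{\pm\frac12(\alpha+(2r+1)\delta)}$, and $h_{i,r\delta}$ is manifestly of the form $(X + (-1)^r \sigma(X))\otimes t^r$ for a suitable $X\in\lie{sl}_{2n+1}$ and the appropriate power of $t$, so by the Proposition of Section \ref{kacrealize} it lies in $[\Gaff,\Gaff]$ modulo center; one then checks it lies in the subalgebra generated by $\G$ and $x_0^+$, i.e. in $\CG$, by matching it against the root space $\widehat{\lie g}_{\pm\alpha}$ (for the short roots and imaginary roots this is clear from the realization of $\G$ inside $L(\lie{sl}_{2n+1})$; for the long real roots $\pm\alpha+2r\delta$ and the half-sums one uses that applying $\ad$ of the generators of $\G$ to $x_0^+$ produces exactly these elements up to scalars).

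Next I would establish linear independence. Since each $x_\alpha$ lies in a distinct root space $\widehat{\lie g}_\alpha$ for $\alpha\in\widehat R_{\rm re}(\pm)$ — and the decomposition $\widehat R_{\rm re}(+)\cup\widehat R_{\rm re}(-) = \widehat R^+_{\rm re}\cup R^-$ is a disjoint union with each such root space one-dimensional in $\Gaff$ — the elements $\{x_\alpha\}$ are linearly independent and span $\bigoplus_{\alpha\in\widehat R_{\rm re}(\pm)}\widehat{\lie g}_\alpha = \lie C\lie n^+\oplus\lie n^-\oplus(\lie C\lie n^-\cap\widehat{\lie g}_{R^-})$; more precisely they give a basis of $\lie C\lie n^+\oplus\lie C\lie n^-$. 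For the $h_{i,s\delta}$: for $s=0$ these are $\alpha_i^\vee$, a basis of $\lie h$; for $s>0$, the elements $h_{1,s\delta},\dots,h_{n,s\delta}$ lie in $\widehat{\lie g}_{s\delta} = \CH_+[s\delta\text{-part}]$ and are independent because $H_1,\dots,H_n$ are independent in $\lie h_{2n+1}$ and the map $H\mapsto H\otimes t^s + \sigma(H)\otimes(-t)^s$ is injective on the span of the $H_i$ (the $+1$-eigenspace of $\sigma$ restricted there, which for odd/even $s$ is handled by the explicit formula). Independence of the union of the three families is then immediate from the triangular decomposition, since the three families sit in $\lie C\lie n^-$, $\CH$, $\lie C\lie n^+$ respectively — no, more carefully, the $x_\alpha$ for $\alpha\in\widehat R_{\rm re}(-)$ and those for $\widehat R_{\rm re}(+)$ separate into $\lie C\lie n^-$ and $\lie C\lie n^+$, the $h_{i,s\delta}$ into $\CH$, so the whole set respects the decomposition and it suffices to have independence within each summand.

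Finally, spanning. By the triangular decomposition it is enough to see that the three families span $\lie C\lie n^+$, $\CH$, and $\lie C\lie n^-$ respectively. For $\lie C\lie n^{\pm} = \bigoplus_{\alpha\in\widehat R_{\rm re}(\pm)}\widehat{\lie g}_{\pm\alpha}$ this is exactly the statement that we have exhibited one nonzero vector in each (one-dimensional) real root space, which we did. For $\CH = \H\oplus\bigoplus_{k>0}\widehat{\lie g}_{k\delta}$, we need that $\{h_{i,k\delta}: 1\le i\le n\}$ is a basis of $\widehat{\lie g}_{k\delta}$ for each $k>0$; since for $A_{2n}^{(2)}$ the imaginary root spaces $\widehat{\lie g}_{k\delta}$ all have dimension $n$ (this is the rank of the underlying finite type datum, $= n$), producing $n$ independent vectors suffices. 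This dimension fact is the one genuinely nontrivial input and is the step I expect to be the main obstacle: one must either cite the multiplicities of imaginary roots for $A_{2n}^{(2)}$ from \cite{K90} (where $\mult(k\delta) = n$ for all $k\ge 1$ in this type) or verify directly from the construction that $\widehat{\lie g}_{k\delta}$, realized inside $L(\lie{sl}_{2n+1})$ as $\{H\otimes t^k + \sigma(H)\otimes(-t)^k : H\in\lie h_{2n+1}\}$ modulo center, is exactly $n$-dimensional — which follows because the $\pm1$-eigenspace of $\sigma$ on $\lie h_{2n+1}$ relevant for a given parity of $k$ has dimension $n$, after accounting for the center. Granting this, the three families each have the right cardinality and are independent and contained in the right summand, hence are bases of $\lie C\lie n^+$, $\CH$, $\lie C\lie n^-$, and their union is a basis of $\CG$. \qedsymbol
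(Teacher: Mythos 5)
Your argument is correct, and its core mechanism—read each listed element as a weight vector in the explicit realization inside $L(\lie{sl}_{2n+1})$ and compare with the root-space decomposition of $\CG$—is exactly the ``comparison of root spaces'' the paper invokes. The genuine difference is how the spanning direction is obtained. The paper notes that the span of the listed elements is a subalgebra of $L(\lie{sl}_{2n+1})$ containing $x_i^\pm=x_{\pm\alpha_i}$ ($1\le i\le n-1$), $x_n^\pm\otimes t^{\pm1}=x_{\pm\alpha_n}$ and $x_0^+=x_{-\frac 12(2\sum_{i=1}^{n-1}\alpha_i+\alpha_n)+\delta}$, hence contains the subalgebra $\CG$ these generate; after that, only the reverse containment and independence remain, and no knowledge of $\dim\widehat{\lie g}_{k\delta}$ is needed. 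You instead prove spanning ``from below'', summand by summand in the triangular decomposition, which makes $\mult(k\delta)=n$ for $A_{2n}^{(2)}$ the essential input—you rightly flag this, and it is indeed available either from \cite[Ch.~8]{K90} or directly from $h_{i,k\delta}=(H_i+(-1)^kH_{2n+1-i})\otimes t^k$, since the $(-1)^k$-eigenspace of $\sigma$ on $\lie h_{2n+1}$ has dimension $n$ for either parity. What your route buys is that you never need to check that the span is closed under brackets or that it contains the generators; what the paper's route buys is that the imaginary-root multiplicity comes out as a consequence rather than entering as an input. Two cosmetic points: the decomposition displayed in your independence paragraph is garbled (your self-correction, that the $x_\alpha$ form a basis of $\lie C\lie n^+\oplus\lie C\lie n^-$, is the right statement), and there is no central element to ``account for'' inside $L(\lie{sl}_{2n+1})$—the imaginary root space is simply the relevant eigenspace of $\sigma$ on $\lie h_{2n+1}$ tensored with $t^k$.
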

\begin{pf} It is clear that the subspace spanned by the elements above is a subalgebra of $L(\lie{sl}_{2n+1})$ and since  $$x_i^\pm =x_{\pm\alpha_i}, \ \ 1\le i\le n-1,\ \ x_n^\pm\otimes t^{\pm 1}=x_{\pm\alpha_n},\ \ x_0^+=  x_{-\frac 12(2\sum_{i=1}^{n-1}\alpha_i+
\alpha_n)+\delta},$$  this subalgebra contains $\CG$. The claim then follows from the comparison of root spaces.
\end{pf}


\subsection{} The following is now clear.
\begin{prop}\label{sp2}
With the notation above, we have
\begin{enumerit} 
\item[(i)]
The set  $$\{x_{\alpha}: \alpha\in \widehat{R}_{\rm{re}}(\pm)\}$$ is a basis for $\lie C\lie n^\pm$ and the set $\{h_{i,r\delta}$~:~ $1\le i \le n$, $r\in\bn\}$ is a basis for $\CH_+$.
\item[(ii)] For $\alpha\in R^+$, the subalgebra of $\CG$ generated by the elements $$\{x_{\pm\alpha+r\delta}~:~r\in\mathbb{Z}_+, \a+r\delta\in \widehat{R} \}$$   is isomorphic to $\lie{sl}_2\otimes \bc[t]$.
\item[(iii)] For $\alpha\in R^+_\ell$,  the subalgebra generated by the elements $$\{x_{\frac 12(\pm\alpha+(2r+1)\delta)}, x_{\pm\alpha+2r\delta}~:~ r\in\mathbb{Z}_+\}$$ is isomorphic to the current algebra of type $A_2^{(2)}$.
\end{enumerit}
\end{prop}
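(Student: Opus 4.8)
The plan is to deduce all three statements directly from the explicit realization of $\CG$ inside $L(\lie{sl}_{2n+1})$ given in \secref{realization}, and in particular from \propref{basis}. Part~(i) is immediate from \propref{basis}: that proposition exhibits a basis of all of $\CG$, and since the triangular decomposition $\CG=\lie C\lie n^+\oplus\CH_+\oplus\H\oplus\lie C\lie n^-$ is a decomposition into sums of root spaces, the basis elements lying in $\lie C\lie n^\pm$ (namely the $x_\alpha$ with $\alpha\in\widehat{R}_{\rm re}(\pm)$) form a basis of $\lie C\lie n^\pm$, and those lying in $\CH_+$ (namely the $h_{i,s\delta}$ with $s\geq 1$) form a basis of $\CH_+$.

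For part~(ii), fix $\alpha\in R^+$ and put $c=1$ if $\alpha$ is short and $c=2$ if $\alpha$ is long; by the description of $\widehat R$ recalled in \secref{prelim} the listed generating set is then $\{x_{\pm\alpha+cr\delta}:r\in\bz_+\}$. I substitute the explicit formulas of \secref{realization} and compute inside $L(\lie{sl}_{2n+1})$. The key point is that each $x_{\pm\alpha+cr\delta}$ is a linear combination of at most two root vectors of $\lie{sl}_{2n+1}$ whose roots are mutually orthogonal (hence strongly orthogonal, since $A_{2n}$ is simply laced) and whose coroots are orthogonal with sum $\alpha^\vee$. Consequently every bracket among these elements decouples into the defining relations of two independent copies of $\lie{sl}_2$; in particular $[x_{\alpha+ca\delta},x_{-\alpha+cb\delta}]$ depends only on $a+b$, and the subalgebra generated by the $x_{\pm\alpha+cr\delta}$ is spanned by $\{x_{\alpha+cr\delta},\,x_{-\alpha+cr\delta},\,h_{\alpha,r}:r\in\bz_+\}$, where $h_{\alpha,r}:=[x_\alpha,x_{-\alpha+cr\delta}]$. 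One then checks that $e\otimes t^r\mapsto x_{\alpha+cr\delta}$, $f\otimes t^r\mapsto x_{-\alpha+cr\delta}$, $h\otimes t^r\mapsto h_{\alpha,r}$ extends to a Lie algebra isomorphism from $\lie{sl}_2\otimes\bc[t]$ onto this subalgebra.

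For part~(iii), write $\alpha=2(\alpha_i+\cdots+\alpha_n)-\alpha_n$. In $\lie{sl}_{2n+1}$ the roots $\alpha(i,n)$ and $\alpha(n+1,2n+1-i)$ have sum $\alpha(i,2n+1-i)$ while their difference is not a root, so $X^\pm_{i,n}$ and $X^\pm_{n+1,2n+1-i}$ generate a subalgebra $\lie s\cong\lie{sl}_3$ of $\lie{sl}_{2n+1}$; since $\sigma$ interchanges these two roots it stabilizes $\lie s$ and restricts there to an order-two diagram automorphism. The formulas of \secref{realization} show that all the elements $x_{\frac12(\pm\alpha+(2r+1)\delta)}$ and $x_{\pm\alpha+2r\delta}$ lie in $L(\lie s)\subseteq L(\lie{sl}_{2n+1})$, and I would then prove that under this embedding the subalgebra they generate coincides with the image of the current algebra of type $A_2^{(2)}$ attached, via the construction of \secref{realization}, to the pair $(\lie s,\sigma|_{\lie s})$. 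Concretely, one checks that $x_\alpha$, $x_{-\alpha}$ and $x_{\frac12(-\alpha+\delta)}$ correspond (after a suitable normalization of root vectors) to the standard generators $x_1^+\otimes t$, $x_1^-\otimes t^{-1}$, $x_0^+$ of the rank-one hyperspecial current algebra, and that the remaining listed elements lie in the subalgebra they generate.

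The one step that requires genuine care is part~(iii): one must follow the $\sigma$-action closely enough to confirm that $\lie s$ is $\sigma$-stable with $\sigma|_{\lie s}$ the diagram automorphism of $\lie{sl}_3$, and then identify $x_{\frac12(\pm\alpha+(2r+1)\delta)}$ and $x_{\pm\alpha+2r\delta}$ with the standard generators of the $A_2^{(2)}$-current algebra under the induced embedding $L(\lie s)\hookrightarrow L(\lie{sl}_{2n+1})$. All the remaining verifications are routine bracket computations using \propref{basis} and the explicit formulas of \secref{realization}.
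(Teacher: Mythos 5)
Your proposal is correct and follows essentially the same route as the paper, which states Proposition~\ref{sp2} as an immediate consequence of the explicit realization and the basis exhibited in Proposition~\ref{basis} (the paper offers no further argument, deeming it clear). Your write-up simply makes explicit the routine root-space and bracket verifications — strong orthogonality of the paired $\lie{sl}_{2n+1}$-roots for part (ii) and the $\sigma$-stable $\lie{sl}_3$-subalgebra for part (iii) — that the paper leaves implicit.
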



\subsection{} \label{pullbacks} 

It is well--known (see \cite{CFK10} for instance) that any ideal of finite codimension in $L(\lie{sl}_{2n+1})$ is of the form $\lie{sl}_{2n+1}\otimes I$ where $I$ is an ideal in $\bc[t,t^{-1}]$ and 
$$
I\supseteq \left ((t-z_1)\cdots (t-z_k)\right)^N \bc[t,t^{-1}]
$$ 
for some non-zero complex numbers $z_s$, $1\le s\le k$ and $N\in\bn$.  

Given $\boz=(z_1,\dots,z_k)\in(\bc^\times)^k$ and $N\ge 1$, let $$\ev_{\boz, N}: L(\lie{sl}_{2n+1})\to\bigoplus_{s=1}^k\lie{sl}_{2n+1}^{z_s,N},\ \ \lie{sl}_{2n+1}^{z_s,N}=\lie{sl}_{2n+1}\otimes\frac{ \bc[t,t^{-1}]}{\left((t-z_s)^N\right)},$$  be the induced morphism of Lie algebras.   Let  
$$
\Psi_{\boz, N}:\CG\hookrightarrow L(\lie{sl}_{2n+1})\to \bigoplus_{s=1}^k\lie{sl}_{2n+1}^{z_s,N},
$$ 
be the restriction of  $\ev_{\boz, N}$ to $\CG$. 
\begin{lem}\label{generalev} 
If $z_p\ne z_s$ for $1\le s\neq p\le k$ then the morphism $\ev_{\boz, N}$ is surjective. If   $z_s\ne\pm z_p$ for $1\le s\ne p\le k$ then the morphism $\Psi_{\boz, N}$ is surjective.
\end{lem}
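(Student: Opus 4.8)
The plan is to treat the two assertions separately, since the first concerns the full loop algebra and is classical, while the second requires genuinely exploiting the explicit realization of $\CG$ from Proposition~\ref{basis}. For the surjectivity of $\ev_{\boz,N}$ under the hypothesis that the $z_s$ are pairwise distinct, I would argue as follows. The map $\ev_{\boz,N}$ is $\lie{sl}_{2n+1}\otimes(-)$ applied to the ring map $\bc[t,t^{-1}]\to\bigoplus_{s=1}^k\bc[t,t^{-1}]/((t-z_s)^N)$, so it suffices to show the latter is surjective; but this is exactly the Chinese Remainder Theorem, since the ideals $((t-z_s)^N)$ are pairwise comaximal in $\bc[t,t^{-1}]$ when the $z_s$ are distinct (each $t-z_s$ is a unit modulo $(t-z_p)^N$ for $p\ne s$). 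Tensoring a surjection of $\bc$-vector spaces with the fixed finite-dimensional space $\lie{sl}_{2n+1}$ preserves surjectivity, so $\ev_{\boz,N}$ is onto.

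For the surjectivity of $\Psi_{\boz,N}$ under the stronger hypothesis $z_s\ne\pm z_p$ for $s\ne p$, the first reduction is to note that, since $\CG$ is the fixed-point-type subalgebra of $L(\lie{sl}_{2n+1})$ described in Proposition~\ref{basis} — spanned by the $x_\alpha$ and the $h_{i,s\delta}$ — its image is a subalgebra of $\bigoplus_s\lie{sl}_{2n+1}^{z_s,N}$, and I want to show it is everything. I would first handle the case $k=1$: there $\Psi_{z,N}\colon\CG\to\lie{sl}_{2n+1}\otimes\bc[t]/((t-z)^N)$, and surjectivity should follow from Proposition~\ref{sp2} together with a direct count using the basis in Proposition~\ref{basis}. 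The point is that the generators $x_{\pm\alpha+r\delta}$, $h_{i,r\delta}$ hit elements of the form $X\otimes t^{r}\pm\sigma(X)\otimes(-t)^{r}$ (and single-term elements $X\otimes t^{r}$ for the long-root vectors), and evaluating $t\mapsto$ (a formal parameter near $z\ne 0$) one sees these span the target: because $z\ne 0$, the "twisted" combinations $X\otimes t^r + (-1)^{i+j}\sigma(X)\otimes(-t)^r$ and their $\sigma$-counterparts, as $r$ varies, decouple into the $\pm1$-eigenspaces of $\sigma$, so one recovers all of $\lie{sl}_{2n+1}\otimes\bc[t]/((t-z)^N)$. Concretely, from $x_{\pm\alpha+r\delta}$ and $x_{\pm\alpha+(r+1)\delta}$ one extracts both $X^\pm_{i,j}\otimes(\text{anything})$ and $\sigma(X^\pm_{i,j})\otimes(\text{anything})$ in the quotient, since $z\ne -z$; running over all short roots, all long roots (via Proposition~\ref{sp2}(iii)), and the Cartan elements $h_{i,r\delta}$, one gets a spanning set for the full $\lie{sl}_{2n+1}$ in the quotient, in every degree $<N$.

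With $k=1$ settled, the general case follows by the Chinese Remainder Theorem argument applied to $\CG$ directly: the composite $\CG\to\bigoplus_s\lie{sl}_{2n+1}^{z_s,N}\to\lie{sl}_{2n+1}^{z_p,N}$ is surjective for each $p$ by the $k=1$ case, so the image of $\Psi_{\boz,N}$ is a subalgebra (indeed a Lie-ideal-closed subspace) projecting onto each summand. The hypothesis $z_s\ne\pm z_p$ is what guarantees that the evaluation-at-$z_s$ and evaluation-at-$z_p$ conditions are independent even after the $\sigma$-twist: an element $X\otimes f(t)+\sigma(X)\otimes f(-t)$ of $\CG$ can be made to vanish to order $N$ at $z_p$ while being prescribed arbitrarily mod $(t-z_s)^N$ precisely because the six points $\pm z_1,\dots,\pm z_k$ that the twisting "sees" are all distinct, so a single interpolation (CRT on $\bc[t,t^{-1}]$ with the ideals $((t-z_s)^N)$ and $((t+z_s)^N)$ all pairwise comaximal) produces the required polynomial. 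Assembling these local surjections gives surjectivity of $\Psi_{\boz,N}$.

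The main obstacle I expect is the $k=1$ step — verifying that the explicitly twisted generators of $\CG$ really do surject onto $\lie{sl}_{2n+1}\otimes\bc[t]/((t-z)^N)$ when $z\ne 0$. This is where one must use the concrete formulas of Proposition~\ref{basis} and cannot stay abstract: one has to check that the $\sigma$-eigenspace decomposition of $\lie{sl}_{2n+1}$ is recovered degree-by-degree, and in particular that the long-root vectors $X_{i,2n+1-i}^\pm$, which in $\CG$ appear only in odd degrees as $x_{\pm\alpha+2r\delta}=X^\pm_{i,2n+1-i}\otimes t^{2r\pm1}$, nonetheless generate everything they must after taking Lie brackets inside the local quotient (using $z\ne0$ so that odd powers of $t$ are units). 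The comaximality bookkeeping in the general $k$ step, once $z_s\ne\pm z_p$ is in force, is then routine.
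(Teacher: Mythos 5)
Your proof of the first assertion is the paper's: the Chinese Remainder Theorem for the pairwise comaximal ideals $((t-z_s)^N)$, then tensoring with $\lie{sl}_{2n+1}$. For the second assertion you take a mildly different route, and it does work, but the paper's organization is worth comparing. You prove the case $k=1$ directly from the explicit basis of Proposition \ref{basis} (only $z\ne 0$ is needed there) and then glue the summands by interpolating with respect to the $2k$ pairwise comaximal ideals $((t-z_s)^N)$, $((t+z_s)^N)$ (you wrote ``six points''; you mean the $2k$ points $\pm z_1,\dots,\pm z_k$). The paper instead treats all the points at once: it reduces to $\lie{sl}_{2n+1}\otimes\bc[t]/(P(t))$ with $P(t)=((t-z_1)\cdots(t-z_k))^N$, observes that the hypothesis makes $P(t)$ and $P(-t)$ relatively prime, writes $A(t)P(t)+B(t)P(-t)=1$, and substitutes $R(t)=Q(t)B(t)P(-t)$ (and $R(t)=Q(-t)B(-t)P(t)$) into the elements $X^\pm_{i,i}\otimes R(t)+X^\pm_{2n+1-i,2n+1-i}\otimes R(-t)\in\CG$, together with the analogous elements $x_{\frac12(\pm\alpha_n+(2r+1)\delta)}$ for $i=n,n+1$, to conclude that every class $X^\pm_{i,i}\otimes Q(t)$, $1\le i\le 2n$, lies in the image; surjectivity then follows because the degree-$r$ component of $\lie{sl}_{2n+1}\otimes\bc[t]$ is generated as a $\G$-module by the elements $X^\pm_{i,i}\otimes t^r$. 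That last observation is worth adopting: it makes your separate worry about the long-root vectors $X^\pm_{i,2n+1-i}$ (which occur in $\CG$ only with odd powers of $t$) evaporate, since once the simple-root vectors are hit, the image --- a subalgebra containing the image of $\G$ --- contains everything. Conversely, your gluing step needs one caution: ``the image is a subalgebra projecting onto each summand'' proves nothing by itself (the diagonal of $\lie{g}\oplus\lie{g}$ has this property); the entire weight rests on your interpolation sentence, which must choose $f$ vanishing modulo $(t\mp z_p)^N$ for $p\ne s$ and also modulo $(t+z_s)^N$, so as to kill the $\sigma$-twisted term at the chosen summand --- your CRT on all $2k$ ideals does deliver this, and is precisely the summand-by-summand form of the paper's single B\'ezout identity. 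In short: your version buys a transparent local ($k=1$) picture at the cost of a gluing argument; the paper's identity $A(t)P(t)+B(t)P(-t)=1$ buys uniformity, no gluing, and a one-line finish via $\G$-module generation.
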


\begin{pf} 
The first statement follows from the fact that if   $z_s\ne z_p$ for $1\leq s\ne p\leq k$ then 
$$
\bc[t,t^{-1}]/(((t-z_1)\cdots(t-z_k))^N)\cong\bigoplus_{s=1}^k\bc[t,t^{-1}]/((t-z_s)^N).
$$ 
For the second claim, remark that, because of the above isomorphism, and the fact that 
$$
\bc[t,t^{-1}]/ (((t-z_1)\cdots (t-z_k))^N) \cong \bc[t]/ (((t-z_1)\cdots (t-z_k))^N),
$$
it is enough to show that 
$$
\CG\hookrightarrow L(\lie{sl}_{2n+1})\to \lie{sl}_{2n+1}\otimes \bc[t]/ (((t-z_1)\cdots (t-z_k))^N)
$$
is surjective. To avoid introducing more notation we will use $\Psi_{\boz, N}$ to denote the above map too. Let us denote $P(t)=((t-z_1)\cdots (t-z_k))^N$.  Because $P(t)$ and $P(-t)$ are, by hypothesis, relatively prime, there exist $A(t), B(t)\in \bc[t]$ such that
$$
A(t)P(t)+B(t)P(-t)=1.
$$
Fix $1\leq i< n$. For all $r\in \bz_+$, $x_{\pm\a_i+r\delta}=X^\pm_{i,i}\otimes t^r+X^\pm_{2n+1-i,2n+1-i}\otimes (-t)^r\in \CG$ so, for any $R(t)\in \bc[t]$, we have
$$
X^\pm_{i,i}\otimes R(t)+X^\pm_{2n+1-i,2n+1-i}\otimes R(-t)\in \CG.
$$
If $Q(t)\in \bc[t]$, then applying this for $R(t)=Q(t)B(t)P(-t)$ and respectively  for $R(t)=Q(-t)B(-t)P(t)$ we obtain that the class of $X^\pm_{i,i}\otimes Q(t)$ and $X^\pm_{2n+1-i,2n+1-i}\otimes Q(t)$ are the image of $\Psi_{\boz, N}$. We therefore obtain that the class of $X^\pm_{i,i}\otimes t^r$ is in the image of $\Psi_{\boz, N}$ for all $1\leq i\leq 2n, ~i\not \in \{n,n+1\}$, and all $r\in \bz_+$. Repeating this argument for $x_{\frac12(\pm\alpha_n+(2r+1)\delta)}\in \CG$, $r\in \bz_+$, we obtain that $X^\pm_{i,i}\otimes t^{(2r+1\pm 1)/2}$ is in the image of $\Psi_{\boz, N}$ for all $n\leq i\leq n+1$ and all $r\in \bz_+$. Since the degree $r$ component of $\lie{sl}_{2n+1}\otimes \bc[t]$ is generated as a $\G$-module by $X_{i,i}^\pm\otimes t^r$, $1\le i\le 2n$, we obtain that the map $\Psi_{\boz, N}$ is surjective. 
\end{pf}
\begin{rem}
Note that the proof of Lemma \ref{generalev} shows that the restriction of $\Psi_{\boz, N}$ to $\oplus_{s\geq r}\CG[s]$ remains surjective for any $r\in \bz_+$.
\end{rem}

\subsection{}\label{surj}
 For the rest of this section we shall relate the finite dimensional representation theory of $\CG$ and  $L(\lie{sl}_{2n+1})$. As a consequence of the results in Section \ref{pullbacks}, we see that  any finite--dimensional representation $V$ of $L(\lie{sl}_{2n+1})$  can be regarded as a module for $\oplus_{s=1}^k\lie{sl}_{2n+1}^{z_s,N}$ for some non--zero distinct complex numbers $z_1,\dots, z_s$ and some $N\in\bn$.  We denote by $\ev_{\boz, N}^*V$ and  $\Psi_{\boz, N} ^*V$  the representation of  $L(\lie{sl}_{2n+1})$,  and respectively of  $\CG$,  obtained by pulling back $V$ through the corresponding morphism. If in addition  $\ev_{\boz, N}^*V$ is a cyclic $L(\lie{sl}_{2n+1})$--module and $z_s\ne \pm z_p$ for $1\le s\ne p\le k$, then as a consequence of Lemma \ref{generalev} we obtain that $\Psi_{\boz, N} ^*V$ is also a cyclic $\CG$--module.  


\subsection{} For $\mu\in P^+$, recall that $V(\mu)$ denotes the irreducible highest weight representation of $\lie g$ with highest weight $\mu$. If $\lambda\in P_{2n+1}^+$, we denote by $V_{2n+1}(\lambda)$ the irreducible highest weight module for $\lie{sl}_{2n+1}$ with highest weight $\lambda$. 
If  $\lambda_s\in P^+_{2n+1}$ and $z_s\in\bc^\times$,  $1\le s\le k$, then the  canonical map $$\ev^*_{\boz,1}(V_{2n+1}(\lambda_1)\otimes\cdots\otimes V_{2n+1}(\lambda_k))\cong \ev_{z_1,1}^*V_{2n+1}(\lambda_1)\otimes \cdots \otimes \ev^*_{z_k, 1}V_{2n+1}(\lambda_k),$$ is an isomorphim of $L(\lie{sl}_{2n+1})$--modules.


  \subsection{} \label{untwistedweyl} We recall now the definition of the local Weyl modules for $L(\lie{sl}_{2n+1})$.
  For $1\le i\le 2n$, and $r\in\bz$, inductively define elements $\Lambda_{i,r}\in\bu(L(\lie h_{2n+1}))$  by 
  $$
  \Lambda_{i,r}=-\frac 1r\sum_{s=0}^{r-1}( H_i\otimes t^{s+1})\Lambda_{i,s},\ \  \Lambda_{i,0}=1.
  $$
  The elements $\Lambda_{i, r}$,  $H_i\otimes 1$, $1\le i\le 2n$, $r\in\mathbb{Z}$, are a set of polynomial generators for $\bu(L(\lie h_{2n+1}))$.

 Suppose that  $\bpi=(\pi_1(u),\dots,\pi_{2n}(u))$ is a collection of polynomials  in   an indeterminate   $u$ such that $\pi_i(u)=1+\sum_{r\ge 1}\pi_{i,r}u^r$ for some $\pi_{i,r}\in\bc$.  For  $1\le i\le 2n$ and $r\in\mathbb{Z}_+$ denote,
$$
 \sum_{r\ge 0}\pi_{i,-r}u^{r}=  \pi_{i,\deg \pi_i}^{-1}\sum_{r\ge 1}\pi_{i,r} u^{\deg\pi_i-r}.
 $$
 \begin{defn}
 Let $\bpi$ be as above. The local Weyl module $W_{2n+1}(\bpi)$ is the $L(\lie{sl}_{2n+1})$--module generated by an element $w_\bpi$ satisfying the relations: 
 \begin{equation*}
 \begin{aligned} 
 &L(\lie n^+_{2n+1})w_\bpi=0, && H_i w_\bpi=\deg\pi_i\cdot w_\bpi,\\
 & (X_i^-)^{\deg\pi_i+1}w_\bpi=0,&& \Lambda_{i, \pm r}w_\bpi=\pi_{i,\pm r} w_\bpi,
 \end{aligned}
 \end{equation*} 
 for all $1\le i\le 2n$ and $r\in\mathbb{Z}_+$.
\end{defn}
Parts (i) and (ii) of  the following Theorem are a combination of results proved in  \cite{CL06} and \cite{CP01}. Part (iii) was proved in \cite[Section 2.7.]{CFS08}.
\begin{thm}\label{charietal}
 Let $\bpi$ be as above. Then,
\begin{enumerit}
\item[(i)] We have, $$\dim W_{2n+1}(\bpi)=\prod_{i=1}^{2n}\binom{2n+1}{i}^{\deg\pi_i}.$$
\item[(ii)] There is a unique  irreducible quotient $V(\bpi)$ of $W(\bpi)$ and there is an isomorphism of $L(\lie{sl}_{2n+1})$--modules, $$V(\bpi)\cong \ev_{z_1,1}^*V_{2n+1}(\lambda_1)\otimes\cdots\otimes \ev_{z_k, 1}^* V_{2n+1}(\lambda_k),$$ where 
 $$\lambda_s=\sum_{i=1}^{2n}m_{i,s}\omega_i,\quad 1\leq s\leq k, \quad\text{and}\quad \pi_i=\prod_{s=1}^k(1-z_su)^{m_{i,s}}, \quad 1\leq i\leq 2n.$$
\item[(iii)] For $N\in\bn$ sufficiently large, we have  $$\left(\lie{sl}_{2n+1}\otimes\prod_{s=1}^k(t-z_s)^N \bc[t,t^{-1}]\right)W_{2n+1}(\bpi)=0.$$
\end{enumerit}
\end{thm}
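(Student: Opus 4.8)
The statement to prove gathers three facts about the local Weyl modules $W_{2n+1}(\bpi)$ for the loop algebra $L(\lie{sl}_{2n+1})$: a dimension formula, identification of the irreducible quotient as a tensor product of evaluation modules, and a finite-codimensionality (annihilation) property. These are essentially the foundational results on local Weyl modules in type $A$, and the plan is to assemble them from the literature cited (\cite{CL06}, \cite{CP01}, \cite{CFS08}) rather than reprove them from scratch. So the "proof" here is really a matter of pinning down the correct references and making the translation between conventions.

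For part (i), the plan is to first reduce to the case where all $z_s$ are distinct. Note that the defining data $\bpi$ factors as $\pi_i = \prod_s (1-z_s u)^{m_{i,s}}$, and the module $W_{2n+1}(\bpi)$ only depends on this factorization; by a standard argument (tensor product decomposition of local Weyl modules over distinct evaluation points, as in \cite{CP01,CL06}) we have $W_{2n+1}(\bpi) \cong \bigotimes_s W_{2n+1}(\bpi^{(s)})$ where $\bpi^{(s)}$ is supported at the single point $z_s$. Each single-point local Weyl module $W_{2n+1}(\bpi^{(s)})$ has the same dimension as the one supported at a generic point, hence the dimension depends only on the degrees $\deg\pi_i = \sum_s m_{i,s}$. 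Finally, for a single fundamental weight $\omega_i$ supported at one point, the local Weyl module is the evaluation module $\ev^* V_{2n+1}(\omega_i)$, which has dimension $\binom{2n+1}{i}$ (the $i$-th exterior power of the standard representation). Multiplicativity over tensor products and over the $i$ then yields $\dim W_{2n+1}(\bpi) = \prod_{i=1}^{2n} \binom{2n+1}{i}^{\deg\pi_i}$. For part (ii), the irreducible quotient $V(\bpi)$ is identified by matching highest weights: the relations $\Lambda_{i,\pm r} w_\bpi = \pi_{i,\pm r} w_\bpi$ encode precisely the Drinfeld-polynomial data, and the unique irreducible with this data is the tensor product $\bigotimes_s \ev_{z_s,1}^* V_{2n+1}(\lambda_s)$ with $\lambda_s = \sum_i m_{i,s}\omega_i$; uniqueness of the irreducible quotient follows from $\dim W_{2n+1}(\bpi)_{\wt} = 1$ at the highest weight.

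For part (iii), the plan is to use that $W_{2n+1}(\bpi)$ is finite-dimensional (by part (i)) and cyclic, so the kernel of the action $L(\lie{sl}_{2n+1}) \to \End W_{2n+1}(\bpi)$ — or rather the largest ideal of $L(\lie{sl}_{2n+1})$ annihilating the module — is an ideal of finite codimension. By the description recalled in Section \ref{pullbacks}, every finite-codimension ideal of $L(\lie{sl}_{2n+1})$ contains $\lie{sl}_{2n+1}\otimes ((t-w_1)\cdots(t-w_m))^N \bc[t,t^{-1}]$ for some points $w_j$ and some $N$. One then argues that the relevant points are exactly the $z_s$: the weight-module structure forces the support of the module (in the sense of the points at which the $\lie h_{2n+1}\otimes\bc[t,t^{-1}]$-action is nontrivial) to lie in $\{z_1,\dots,z_k\}$, because the generating relations on $\Lambda_{i,\pm r}$ say the generating function $\sum_r \Lambda_{i,r} u^r$ acts as $\pi_i(u)$ whose roots are among the $z_s^{-1}$. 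Hence for $N$ large, $\lie{sl}_{2n+1}\otimes \prod_s(t-z_s)^N\bc[t,t^{-1}]$ annihilates $w_\bpi$ and therefore (being an ideal) the whole module. This is exactly the content of \cite[Section 2.7]{CFS08}, and I would cite it directly.

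The main obstacle — or rather the only real work — is bookkeeping: making sure the normalization of the $\Lambda_{i,r}$ (the recursion $\Lambda_{i,r} = -\frac{1}{r}\sum_{s=0}^{r-1}(H_i\otimes t^{s+1})\Lambda_{i,s}$) matches the convention under which the cited theorems are stated, and that the identification $\pi_i = \prod_s(1-z_s u)^{m_{i,s}}$ is consistent across parts (ii) and (iii). Since the paper explicitly attributes (i) and (ii) to \cite{CL06} and \cite{CP01} and (iii) to \cite[Section 2.7]{CFS08}, the expectation is that the proof in the paper is simply a pointer to these references together with the tensor-product reduction for (i); I would write it the same way, keeping the verification of conventions implicit.
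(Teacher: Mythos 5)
Your proposal is correct and matches the paper, which gives no argument beyond attributing parts (i) and (ii) to \cite{CL06} and \cite{CP01} and part (iii) to \cite[Section 2.7]{CFS08}; your additional sketch (tensor-product factorization over distinct points, fundamental local Weyl modules as evaluation modules of dimension $\binom{2n+1}{i}$, identification of the irreducible quotient via the $\Lambda_{i,r}$-eigenvalue data, and the finite-codimension-ideal argument for (iii)) is an accurate summary of how those cited results go.
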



\subsection{} We continue to use the notation used in Section \ref{untwistedweyl}. The following result will play an important role.
\begin{prop}\label{piw} 
Assume that $\bpi=(\pi_1,\dots, \pi_{2n})$  has the additional property that for any $z\in\bc$ at most one of $z$ or $-z$ is a root of $\pi_1\cdots\pi_{2n}$. Let 
$$\lambda=\sum_{i=1}^n(\deg \pi_i+\deg\pi_{2n+1-i})\omega_i\in P^+.$$
There exists a maximal ideal $\bi_\bpi$ of $\ba_\lambda$ such that  $\Psi_{\boz, N}^*W_{2n+1}(\bpi)$ is a quotient of $W(\lambda,\bi_\bpi)$. In consequence, $$\dim W(\lambda,\bi_\bpi)\ge\dim\Psi_{\boz, N}^*W_{2n+1}(\bpi)=\prod_{i=1}^{2n}\binom{2n+1}{i}^{\deg\pi_i}.$$ The unique irreducible quotient of $W(\lambda, \bi_\bpi)$ is 
isomorphic to 
$\Psi_{\boz, 1}^*(V(\lambda_1))\otimes\cdots\otimes \Psi_{\boz, 1}^*(V(\lambda_k))$.
\end{prop}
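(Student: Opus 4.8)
The plan is to realize $\Psi_{\boz,N}^*W_{2n+1}(\bpi)$ as a quotient of the global Weyl module $W(\lambda)$, to read off the maximal ideal $\bi_\bpi$ from the way $\bu(\CH_+)$ acts on the cyclic generator, and finally to identify the unique irreducible quotient. First I would fix $N$ large enough that Theorem~\ref{charietal}(iii) applies and write $\boz=(z_1,\dots,z_k)$ for the distinct nonzero numbers occurring there; the hypothesis on $\bpi$ says exactly that $z_s\neq\pm z_p$ whenever $s\neq p$. Then $W_{2n+1}(\bpi)$ is a module over $\bigoplus_{s=1}^k\lie{sl}_{2n+1}^{z_s,N}$, cyclic over $L(\lie{sl}_{2n+1})$ on $w_\bpi$, and Lemma~\ref{generalev} together with the discussion in Section~\ref{surj} shows that $\Psi_{\boz,N}^*W_{2n+1}(\bpi)$ is a cyclic $\CG$-module generated by $w_\bpi$.

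Next I would verify that $w_\bpi$ satisfies the defining relations of the generator $w_\lambda$ of $W(\lambda)$. Since $\lie C\lie n^+\subseteq L(\lie n^+_{2n+1})$ as subspaces of $L(\lie{sl}_{2n+1})$ and $L(\lie n^+_{2n+1})w_\bpi=0$, we get $\lie C\lie n^+w_\bpi=0$. The subalgebra $\lie h$ is spanned by the elements $\alpha_i^\vee=h_i=H_i+H_{2n+1-i}$, and $H_iw_\bpi=\deg\pi_i\cdot w_\bpi$ gives $\alpha_i^\vee w_\bpi=(\deg\pi_i+\deg\pi_{2n+1-i})w_\bpi=\lambda(\alpha_i^\vee)w_\bpi$, so $hw_\bpi=\lambda(h)w_\bpi$ for all $h\in\lie h$. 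Finally, $\bu(\lie g)w_\bpi$ is finite dimensional, being contained in $W_{2n+1}(\bpi)$, and is generated by the vector $w_\bpi$, which is killed by $\lie n^+\subseteq\lie C\lie n^+$ and has dominant weight $\lambda$; hence $\bu(\lie g)w_\bpi\cong V(\lambda)$, and in particular $x_{-\alpha}^{\lambda(\alpha^\vee)+1}w_\bpi=0$ for every $\alpha\in R^+$. Together with cyclicity, this produces a surjection $\phi\colon W(\lambda)\twoheadrightarrow\Psi_{\boz,N}^*W_{2n+1}(\bpi)$ sending $w_\lambda$ to $w_\bpi$.

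A short computation with the realization of Section~\ref{realization} shows that each $\alpha(i)|_{\lie h}$, $1\le i\le 2n$, is a positive multiple of a simple root of $\lie g$; it follows that the only $\lie h_{2n+1}$-weight of $W_{2n+1}(\bpi)$ whose restriction to $\lie h$ equals $\lambda$ is its highest weight, so the $\lie h$-weight space $\bigl(\Psi_{\boz,N}^*W_{2n+1}(\bpi)\bigr)_\lambda$ is one dimensional, spanned by $w_\bpi$. Consequently the abelian algebra $\bu(\CH_+)$ acts on $w_\bpi$ through a character $\chi$; I set $\tilde\bi_\bpi=\ker\chi$ and let $\bi_\bpi$ be its image in $\ba_\lambda$. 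Because $\phi$ is a $\CG$-module map with $\phi(w_\lambda)=w_\bpi$, one has $\ann(w_\lambda)\subseteq\tilde\bi_\bpi$, so $\bi_\bpi$ is a codimension-one, hence maximal, ideal of $\ba_\lambda$; and since the left $\CG$-action commutes with the right $\bu(\CH_+)$-action on $W(\lambda)$, the map $\phi$ annihilates $W(\lambda)\tilde\bi_\bpi$ and therefore factors through $W(\lambda,\bi_\bpi)=W(\lambda)/W(\lambda)\tilde\bi_\bpi$. This gives the surjection $W(\lambda,\bi_\bpi)\twoheadrightarrow\Psi_{\boz,N}^*W_{2n+1}(\bpi)$, whence $\dim W(\lambda,\bi_\bpi)\ge\dim W_{2n+1}(\bpi)=\prod_{i=1}^{2n}\binom{2n+1}{i}^{\deg\pi_i}$ by Theorem~\ref{charietal}(i). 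For the last assertion, $\Psi_{\boz,N}^*W_{2n+1}(\bpi)$ surjects onto $\Psi_{\boz,N}^*V(\bpi)$, which by Theorem~\ref{charietal}(ii) and the tensor product identification recalled in Section~\ref{realization} equals $\Psi_{\boz,1}^*(V(\lambda_1))\otimes\cdots\otimes\Psi_{\boz,1}^*(V(\lambda_k))$; this module is the pullback of an irreducible $\bigoplus_s\lie{sl}_{2n+1}$-module along the surjection $\Psi_{\boz,1}$ of Lemma~\ref{generalev}, hence irreducible over $\CG$, and as $W(\lambda,\bi_\bpi)$ has a unique irreducible quotient it must coincide with this module.

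I expect the delicate points to be the two places where the hypothesis on $\bpi$ is genuinely used -- the passage from $L(\lie{sl}_{2n+1})$-cyclicity to $\CG$-cyclicity, and the irreducibility of the tensor product, both through the surjectivity statement of Lemma~\ref{generalev} -- together with the bookkeeping with the two commuting actions on $W(\lambda)$ needed to extract $\bi_\bpi$ and obtain the factorization through $W(\lambda,\bi_\bpi)$. The remaining ingredients (the presentation of $W(\lambda)$, the identity $\bu(\lie g)w_\bpi\cong V(\lambda)$, the folding computation for $\alpha(i)|_{\lie h}$, and Theorem~\ref{charietal}(i)) are routine.
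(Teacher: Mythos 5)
Your proposal is correct and follows essentially the same route as the paper: check that $w_\bpi$ satisfies the defining relations of $w_\lambda$, observe that $\bu(\CH_+)$ acts on $w_\bpi$ by a character to extract the maximal ideal $\bi_\bpi$ and the surjection from $W(\lambda,\bi_\bpi)$, and use Theorem~\ref{charietal}(iii) together with Lemma~\ref{generalev} (via the hypothesis $z_s\neq\pm z_p$) to identify $\bu(\CG)w_\bpi$ with all of $\Psi_{\boz,N}^*W_{2n+1}(\bpi)$, then conclude with Theorem~\ref{charietal}(i),(ii). The only differences are in how you verify sub-steps the paper leaves implicit (the weight-restriction argument for $\dim\bu(\CH_+)w_\bpi=1$ rather than the $\Lambda_{i,r}$-relations, and the finite-dimensional highest-weight argument for $x_{-\alpha}^{\lambda(\alpha^\vee)+1}w_\bpi=0$), and these are sound.
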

\begin{pf}
Form the definition of  $W_{2n+1}(\bpi)$ we see that $\bu(\CG)w_\bpi$ is a quotient of $W(\lambda)$. Moreover, $\dim \bu(\CH_+)w_\bpi=1,$ and hence it follows that there exists a maximal ideal $\bi_\bpi$ in $\ba_\lambda$ such that $$\bi_\bpi w_\bpi=0.$$ Therefore, there exists a surjective map of $\CG$--modules,  $$W(\lambda, \bi_\bpi)\to \bu(\CG)w_\bpi\to 0.$$
By Theorem \ref{charietal}(iii), there exist $N$ such that the action of $L(\lie{sl}_{2n+1})$ factors through $\bigoplus_{s=1}^k\lie{sl}_{2n+1}^{z_s,N}$. Our hypothesis allows us to use Lemma \ref{generalev} which, as explained in Section \ref{surj}, implies that  $\bu(\CG)w_\bpi=\Psi_{\boz, N}^*W_{2n+1}(\bpi)$. Our claim now immediately follows.
\end{pf}


\section{ The algebra  \texorpdfstring{$\ba_\lambda$}{A-lambda}} \label{five}

\subsection{}
The goal of this section is to give the proof of Theorem \ref{alambda}.

The proof proceeds as follows. We first define elements $P_{i,r}\in\bu(\CH_+)$ which are the analogs of the elements $\Lambda_{i,r}$ introduced in Section \ref{untwistedweyl} in the context of $L(\lie{sl}_{2n+1})$.  We then show that given $\lambda\in P^+$  all but finitely many of these elements are zero in $\ba_\lambda$. In the final step we use Proposition \ref{piw} to prove that $\ba_\lambda$ is the polynomial algebra in the (non--zero) images of the elements $P_{i,r}$.


\subsection{} Using the Poincar\' e--Birkhoff--Witt theorem, we may write $$\bu(\CG)=\bu(\CH)\bigoplus \left( \lie C\lie n^-\bu(\CG) +\bu(\CG)\lie C\lie n^+\right),$$ and  let $\bop: \bu(\CG)\to \bu(\CH)$ be the corresponding projection. For $r\in\bz_+$, and $y\in \CG$, let us use the notation
$$
y^{(r)}=\frac{y^r}{r!}\in \bu(\CG).
$$
For $r\in\bz_+$, define
\begin{equation*}
\begin{aligned}
& P_{i,r}= (-1)^r\bop\left(x_{\alpha_i+\delta}^{(r)}x_{-\alpha_i}^{(r)}\right), &&1\le i\le n-1,\\
&P_{n,r}=(-1)^r\bop\left(x_{\frac 12(
\alpha_n+\delta)}^{(2r)}x_{-\alpha_n}^{(r)}\right), && i=n.
\end{aligned}
\end{equation*}
Clearly, the grade of $P_{i,r}$ is $2r$ and, moreover, from the defining relations of $W(\lambda)$ we see that 
\begin{equation}\label{pir}  
P_{i,r} w_\lambda=0,\ \ \text{for }1\le i\le n,\ \ r\ge\lambda(\alpha_i^\vee)+1.
\end{equation} 
More generally,  for  $r\in \bz_+$, we define  
\begin{equation*}
\begin{aligned}
& P_{\alpha,r}= (-1)^r\bop\left(x_{\alpha+\delta}^{(r)}x_{-\alpha}^{(r)}\right),&& \text{for } \alpha\in R^+_s, \\
& P_{\alpha,r}= (-1)^r\bop\left(x_{\alpha+2\delta}^{(r)}x_{-\alpha}^{(r)}\right),&& \text{for }  \alpha\in R^+_\ell,\\
& P_{\frac 12\alpha, r}= (-1)^r\bop\left(x_{\frac 12(\alpha+\delta)}^{(2r)}x_{-\alpha}^{(r)}\right),&& \text{for }  \alpha\in R^+_\ell.
\end{aligned}
\end{equation*}
Note that $P_{i,r}= P_{\alpha_i,r}$ if $1\le i\le n-1$, while $P_{n,r}= P_{\frac 12\alpha_n,r}$.


\subsection{} The following result gives a presentation of the enveloping algebra $\bu(\CH_+)$.

\begin{lem} \label{garl} 
For $1\le i\le n$ and $r\in\bn$, we have $$P_{i,r}=-\frac 1r\sum_{s=0}^{r-1}h_{i,(s+1)\delta}P_{i,r-s-1}.$$ 
Furthermore, $\bu(\CH_+)$ is the graded polynomial algebra on the generators $P_{i,r}$ of grade $2r$, $1\leq i\leq n$, $r\in\bn$.
\end{lem}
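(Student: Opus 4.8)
The plan is to establish the two assertions of the lemma separately, the recursion first and the polynomiality second.

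\textbf{Step 1: the recursion.}
The identity $P_{i,r}=-\frac1r\sum_{s=0}^{r-1}h_{i,(s+1)\delta}P_{i,r-s-1}$ is the analog, inside $\bu(\CH_+)$, of the defining recursion for the Garland--type elements $\Lambda_{i,r}$ of Section~\ref{untwistedweyl}. For $1\le i\le n-1$, I would work inside the subalgebra generated by $\{x_{\pm\alpha_i+r\delta}:r\in\bz_+\}$, which by Proposition~\ref{sp2}(ii) is isomorphic to $\lie{sl}_2\otimes\bc[t]$; there the classical Garland identities (from \cite{G78}, used in \cite{CP01}) give precisely the relation among $\bop(x_{\alpha_i+\delta}^{(r)}x_{-\alpha_i}^{(r)})$ and the $h_{i,s\delta}$'s, so the claim follows by transporting that identity through the isomorphism and inserting the sign $(-1)^r$ that enters our normalization. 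For $i=n$ one uses instead Proposition~\ref{sp2}(iii): the relevant subalgebra is the rank-one hyperspecial current algebra of type $A_2^{(2)}$, and the analogous identity among $\bop(x_{\frac12(\alpha_n+\delta)}^{(2r)}x_{-\alpha_n}^{(r)})$ and the $h_{n,s\delta}$ must be verified there by a direct PBW computation — projecting products of root vectors onto the Cartan part and collecting terms by grade. This $A_2^{(2)}$ computation is the one genuinely new piece; I expect it to be the main obstacle, since this rank-one algebra has not appeared before and the bracket relations $[x_{\frac12(\alpha_n+\delta)},x_{-\alpha_n}]$ mix gradings in a way that is not simply inherited from $\lie{sl}_2[t]$.

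\textbf{Step 2: polynomiality.}
Granting the recursion, I would argue exactly as in the loop-algebra case. First, the recursion shows by induction on $r$ that each $P_{i,r}$ lies in the subalgebra generated by $\{h_{i,s\delta}:1\le s\le r\}$, and conversely that $h_{i,r\delta}$ is expressible (again via the recursion, solved for the top term) in terms of $P_{i,1},\dots,P_{i,r}$; hence the $P_{i,r}$, $1\le i\le n$, $r\in\bn$, generate $\bu(\CH_+)$, and together with the fact that by Proposition~\ref{sp2}(i) the elements $h_{i,r\delta}$ form a basis of the abelian Lie algebra $\CH_+$, so that $\bu(\CH_+)$ is the polynomial algebra $\bc[h_{i,r\delta}:1\le i\le n,\ r\in\bn]$, a dimension/grading count in each graded component shows the generating set $\{P_{i,r}\}$ is algebraically independent. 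Concretely: in grade $2r$ the recursion identifies $P_{i,r}$ with $-\tfrac1r h_{i,r\delta}$ plus a polynomial in strictly lower $h_{i,s\delta}$'s, so the change of generators from $\{h_{i,s\delta}\}$ to $\{P_{i,s}\}$ is graded and triangular, hence invertible, giving the isomorphism of graded algebras $\bu(\CH_+)\cong\bc[P_{i,r}:1\le i\le n,\ r\in\bn]$ with $P_{i,r}$ in grade $2r$.

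Finally I would remark that although the displayed recursion is stated only for the simple roots $\alpha_i$, the same argument applied inside the subalgebras attached to an arbitrary $\alpha\in R^+_s$ or $\alpha\in R^+_\ell$ (using Proposition~\ref{sp2}(ii),(iii) respectively) yields the analogous recursions for the elements $P_{\alpha,r}$, $P_{\frac12\alpha,r}$ defined above; these will be needed later, but for the lemma as stated only the $\alpha_i$ case is required.
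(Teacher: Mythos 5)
Your overall structure matches the paper: for $1\le i\le n-1$ you reduce to the $\lie{sl}_2\otimes\bc[t]$ subalgebra of Proposition \ref{sp2}(ii) and invoke Garland's identities (\cite{G78}, in the formulation of \cite{CP01}), and your Step 2 (the graded, triangular change of generators from $\{h_{i,s\delta}\}$ to $\{P_{i,r}\}$, using Proposition \ref{sp2}(i) to know that $\bu(\CH_+)$ is polynomial on the $h_{i,s\delta}$) is a correct and standard way to get the second assertion. The problem is the case $i=n$, which is the only content of the lemma beyond classical Garland theory: you do not prove the recursion for $P_{n,r}=(-1)^r\bop\bigl(x_{\frac12(\alpha_n+\delta)}^{(2r)}x_{-\alpha_n}^{(r)}\bigr)$, you only assert that it ``must be verified by a direct PBW computation'' and flag it as the expected main obstacle. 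As written, that leaves the proof incomplete precisely at the new point.

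Moreover, your premise that this rank-one identity ``has not appeared before'' is not accurate in the relevant sense. The paper closes the $i=n$ case not by a fresh computation but by observing that the required identity is a reformulation of a known result for the affine Lie algebra of type $A_2^{(2)}$: \cite[Corollary 5.39]{F-V98} (see also \cite[Lemma 3.3(iii)]{CFS08}), applied inside the subalgebra of Proposition \ref{sp2}(iii) spanned by the elements $x_{\pm\frac12(\alpha_n+(2r+1)\delta)}$, $x_{\pm\alpha_n+2r\delta}$ and $h_{n,s\delta}$ — the same source the paper uses again for Lemma \ref{fullrelations} in the appendix. So the fix is either to cite that result (as the paper does) or to actually carry out the PBW computation you sketch; the latter would work but is nontrivial and cannot simply be left as ``to be verified.'' Your closing remark that the analogous recursions for $P_{\alpha,r}$ and $P_{\frac12\alpha,r}$ follow by the same reduction is consistent with the paper's remark after the lemma.
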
 
\begin{pf}
For $1\le i \le n-1$ the statement is  a result of Garland (see \cite{CP01} for this formulation). For $i=n$ the result is a similar reformulation of \cite[Corollary 5.39]{F-V98}, (see also \cite[Lemma 3.3(iii)]{CFS08})  where the analogous result was established for  the affine Lie algebra of type $A_2^{(2)}$.
\end{pf}
Using Proposition \ref{sp2}, we see that the elements $P_{\alpha, r}$ for $\alpha\in R^+$  and  $P_{\frac 12\alpha, r}$ and $\alpha\in R^+_\ell$ satisfy an analogous recurrence relation.


\subsection{} The next result describes the action of the elements $P_{i,r}$ on a tensor product of certain representations of $\CG$. 
\begin{lem}\label{tp} 
For $1\le s\le k$, let $V_s$ be a representation of $\CG$ and let  $v_s\in V_s$ be  such that $\lie C\lie n^+ v_s=0$.  For all $1\le i\le n$ and $r\in\mathbb{Z}_+$,  $$P_{i,r}(v_1\otimes\cdots\otimes v_k)=\sum P_{i,j_1}v_1\otimes\cdots\otimes P_{i,j_k}v_k,$$ where the sum is over all $j_s\in\bz_+$ with $j_1+\cdots +j_s=r$.  Analogous statements are true for the elements $P_{\alpha, r}$, $\alpha\in R^+$  and $P_{\frac 12\alpha, r}$, $\alpha\in R^+_\ell$.
\end{lem}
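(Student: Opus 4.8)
The strategy is to reduce the computation of $P_{i,r}$ on a tensor product to the behaviour of the corresponding coproduct in $\bu(\CH_+)$, exploiting the recurrence relation of \lemref{garl}. First I would observe that the action of $\CH_+$ on a tensor product $V_1\otimes\cdots\otimes V_k$ is via the iterated coproduct of $\bu(\CG)$, so $h_{i,s\delta}$ acts as $\sum_{p=1}^k 1\otimes\cdots\otimes h_{i,s\delta}\otimes\cdots\otimes 1$. The point is that although $P_{i,r}$ is \emph{defined} by applying the projection $\bop$ to a product $x_{\alpha_i+\delta}^{(r)}x_{-\alpha_i}^{(r)}$ (resp.\ the $i=n$ analogue) which is \emph{not} in $\bu(\CH_+)$ a priori, \lemref{garl} tells us that inside $\bu(\CH_+)$ itself we have the closed recurrence $r\,P_{i,r}=-\sum_{s=0}^{r-1}h_{i,(s+1)\delta}P_{i,r-s-1}$, and this is an identity among honest elements of the abelian algebra $\bu(\CH_+)$.

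\textbf{Key steps.} The plan is: (1) establish the base case $r=0$, where $P_{i,0}=1$ and both sides equal $v_1\otimes\cdots\otimes v_k$. (2) Induct on $r$. For the inductive step, apply the recurrence of \lemref{garl} to the left-hand side:
\begin{equation*}
r\,P_{i,r}(v_1\otimes\cdots\otimes v_k)=-\sum_{s=0}^{r-1} h_{i,(s+1)\delta}\, P_{i,r-s-1}(v_1\otimes\cdots\otimes v_k).
\end{equation*}
By the inductive hypothesis $P_{i,r-s-1}(v_1\otimes\cdots\otimes v_k)=\sum_{\boj} P_{i,j_1}v_1\otimes\cdots\otimes P_{i,j_k}v_k$ with $|\boj|=r-s-1$. (3) Now expand $h_{i,(s+1)\delta}$ via the coproduct as a sum over which tensor factor it hits, use that $h_{i,(s+1)\delta}$ commutes with each $P_{i,j_p}$ (both lie in the abelian algebra $\bu(\CH_+)$), and reorganize the resulting double sum. (4) Recognize that the sum over $s$ and over the chosen factor $p$, combined with the sum over $\boj$, telescopes — via the single-tensor-factor recurrence $j\,P_{i,j}v_p=-\sum_{s=0}^{j-1}h_{i,(s+1)\delta}P_{i,j-s-1}v_p$ applied in the $p$-th slot — into $r\sum_{|\boj'|=r}P_{i,j_1}v_1\otimes\cdots\otimes P_{i,j_k}v_k$. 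Dividing by $r$ gives the claim. (5) For the $P_{\alpha,r}$ and $P_{\frac12\alpha,r}$ with $\alpha\in R^+$ or $\alpha\in R^+_\ell$, invoke the remark after \lemref{garl} that these satisfy the same recurrence (using \propref{sp2}), so the identical induction applies verbatim.

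\textbf{Main obstacle.} The only genuinely delicate point is the bookkeeping in step (4): one must verify that the coefficient rearrangement in the double sum
\begin{equation*}
-\sum_{s=0}^{r-1}\sum_{p=1}^k\sum_{|\boj|=r-s-1} P_{i,j_1}v_1\otimes\cdots\otimes\bigl(h_{i,(s+1)\delta}P_{i,j_p}v_p\bigr)\otimes\cdots\otimes P_{i,j_k}v_k
\end{equation*}
reassembles correctly. The clean way to see this is to set $j'_p=j_p+s+1$ for the active slot, so that for a fixed target multi-index $\boj'$ with $|\boj'|=r$ the contributions from the $p$-th slot are exactly $-\sum_{s=0}^{j'_p-1}h_{i,(s+1)\delta}P_{i,j'_p-s-1}v_p = j'_p\,P_{i,j'_p}v_p$ by the rank-one recurrence; summing $\sum_p j'_p = r$ over the slots produces the overall factor $r$. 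A subtlety worth a sentence in the write-up is that the rank-one recurrence is being applied to a vector $v_p$ with $\lie C\lie n^+v_p=0$ rather than as an identity of operators — but since the recurrence of \lemref{garl} holds already at the level of $\bu(\CH_+)$, it applies to any vector, so there is no issue. Everything else is a routine induction.
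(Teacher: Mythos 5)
Your proof is correct, but it takes a different route from the paper. The paper's own argument works directly from the definition of $P_{i,r}$ as $(-1)^r\bop\bigl(x_{\alpha_i+\delta}^{(r)}x_{-\alpha_i}^{(r)}\bigr)$: it invokes the explicit comultiplication formula $\Delta\bigl(x^{(r)}y^{(s)}\bigr)=\sum_{p,m}x^{(p)}y^{(s-m)}\otimes x^{(r-p)}y^{(m)}$ for products of divided powers, iterates it, and then uses the hypothesis $\lie C\lie n^+v_s=0$ (together with the PBW decomposition underlying $\bop$) to discard the unwanted terms slot by slot; in particular the highest-weight condition on the $v_s$ is genuinely used there, and \lemref{garl} plays no role. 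You instead take the recurrence $rP_{i,r}=-\sum_{s=0}^{r-1}h_{i,(s+1)\delta}P_{i,r-s-1}$ of \lemref{garl} as the starting point and combine it with the primitivity of the elements $h_{i,s\delta}\in\CH_+$ and an induction on $r$; your telescoping computation in step (4) is exactly the coefficient-level statement that the generating series $\sum_{r\ge 0}P_{i,r}u^r$ is group-like, i.e.\ that $\Delta^{k-1}(P_{i,r})=\sum_{j_1+\cdots+j_k=r}P_{i,j_1}\otimes\cdots\otimes P_{i,j_k}$ holds exactly in $\bu(\CH_+)^{\otimes k}$. This buys you a slightly stronger conclusion — the identity holds on arbitrary vectors, so the hypothesis $\lie C\lie n^+v_s=0$ is not needed in your argument, as you note — and it avoids any bookkeeping with the projection $\bop$ and the terms in $\lie C\lie n^-\bu(\CG)+\bu(\CG)\lie C\lie n^+$, which the paper's terser argument has to dismiss. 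The trade-off is that your proof leans on \lemref{garl} (and, for the elements $P_{\alpha,r}$, $P_{\frac12\alpha,r}$, on the remark following it together with \propref{sp2}), whereas the paper's proof is self-contained given the divided-power coproduct formula; since \lemref{garl} is established independently (via Garland and the $A_2^{(2)}$ analogue), there is no circularity, and your step (5) is legitimate because the Cartan-type elements occurring in the analogous recurrences again lie in $\CH_+$ and are primitive.
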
\label{tphw}
\begin{pf} Let $\Delta$ be the usual comultiplication in the enveloping algebra $\bu(\CG)$. As it is well-known, if $x, y\in\CG$ and $r,s\in\bz_+$, we have
$$
\Delta\left(x^{(r)}y^{(s)}\right)=\sum_{p, m} x^{(p)}y^{(s-m)}\otimes x^{(r-p)}y^{(m)}.
$$
Keeping in mind that 
$$
P_{i,r}(v_1\otimes\cdots\otimes v_k)=\Delta^{k-1}(P_{i,r})(v_1\otimes\cdots\otimes v_k),
$$
where $\Delta^{k}: \bu(\CG)\to \bu(\CG)^{\otimes k}$ is defined as $(\Delta\otimes \id^{\otimes (k-1)})\cdots(\Delta\otimes \id)\Delta$, our claim follows by using  the definition of $P_{i,r}$ and the fact that $\lie C\lie n^+ v_s=0$.
\end{pf}


\subsection{} Let $\lambda\in P_{2n+1}^+$,  $z\in\bc^\times$, and let $v_\lambda$ be a highest weight vector of $V_{2n+1}(\l)$. Using Theorem \ref{charietal} for $\bpi=(\pi_1,\dots, \pi_{2n})$, with $\pi_i(u)=(1-z^{-1}u)^{\lambda(H_i)}$, $1\leq i\leq 2n$, we obtain
\begin{equation}\label{genmu} 
\left(\sum_{r\ge 0} \Psi_{z,1}(P_{i,r})u^r\right) v_\lambda =\left((1-z^{-1}u)^{\lambda(H_i)}(1+z^{-1}u)^{\lambda(H_{2n+1-i})}\right)v_\lambda, \ \ 1\le i\le n.
\end{equation}  
If $\lambda\in P^+$, the above equation reads
\begin{equation} \label{genmualt}
\left(\sum_{r\ge 0} \Psi_{z,1}(P_{i,r})u^r\right) v_\lambda =(1-z^{-1}u)^{\lambda(\alpha_i^\vee)}v_\lambda, \ \ 1\le i\le n.
\end{equation}  

Assume now that  $\lambda\in P^+$, $\bpi=(\pi_1,\dots, \pi_{2n})$, and $\boz=(z_1,\dots, z_{k})$ are as in the hypothesis of Proposition \ref{piw}. It follows from Proposition \ref{piw} and Lemma \ref{tp} that  the action of $P_{i,r}$ on $w_{\lambda}\in W(\lambda, \bi_\bpi)$ is given by 
\begin{equation} \label{piwpi}
\left( \sum_{r\ge 0}P_{i,r}u^r\right)w_\lambda=\pi_i(u)\pi_{2n+1-i}(-u)w_\lambda, \ \ 1\le i\le n.
\end{equation}
For the above equality, it is important to keep in mind that $\lambda_1,\dots, \lambda_k$ form Proposition \ref{piw} are elements of  $P_{2n+1}^+$ even though $\lambda\in P^+$.


\subsection{} \label{lambdas}  
We are now ready to present the proof of Theorem \ref{alambda}. Let $\tilde \ba_\lambda$ be the quotient of $\bu(\CH_+)$ by the elements  $P_{i,r}$, $1\le i\le n$, $r\ge\lambda(\alpha_i^\vee)+1$. Clearly $\tilde\ba_\lambda$ is a graded algebra  isomorphic to the graded polynomial subalgebra of $\bu(\CH_+)$ generated by the elements $P_{i,r}$, $1\le i\le \lambda(\alpha_i^\vee)$ and  we shall routinely use this. By \eqref{pir} we see that $\ba_\lambda$ is a quotient of $\tilde\ba_\lambda$. It remains to prove that they are isomorphic. We first show that $P_{i,r}$ for $1\le i\le n$, $1\le r\le\lambda(\alpha_i^\vee)$ can act on some quotient of $W(\l)$ in any prescribed fashion.

Let $a_{i,r}\in\bc$ for $1\le i\le n$, $1\le r\le\lambda(\alpha_i^\vee)$. Set $\pi_i(u)=1+\sum_{r=1}^{\lambda(\alpha_i^\vee)}a_{i,r}u^r$ and let $z_1,\dots,z_k$ be the distinct roots of $\pi_1\cdots\pi_n$. Set 
\begin{equation}
\lambda_s=\sum_{i=1}^n m_{i,s}\omega_i,\quad 1\leq s\leq k, \quad \lambda_0=\lambda-\sum_{s=1}^k\lambda_s,
\end{equation} 
where $m_{i,s}\in\mathbb{Z}_+$ is the multiplicity of $z_s$ in $\pi_i$. Equivalently, 
$$
\pi_i(u)=\prod_{s=1}^k (1-z_s^{-1}u)^{\lambda_s(\alpha_i^\vee)}, \quad 1\leq i\leq n.
$$
Consider the $\CG$--module, 
$$
V:=\ev^*_0 V(\lambda_0)\otimes \Psi^*_{z_1,1}V_{2n+1}(\lambda_1)\otimes\cdots\otimes \Psi^*_{z_k,1} V_{2n+1}(\lambda_k),
$$ and denote by $v_{\lambda_s}$ a highest weight vector of $V_{2n+1}(\lambda_s)$, $0\leq s\leq k$. It is clear that  $$\ev_0 (P_{i,r})v_{\lambda_0}=0,$$ for $r\in\bn$, and by equation \eqref{genmualt} we have 
$$\sum_{r\geq 0}^{\lambda_s(\alpha_i^\vee)}\Psi_{z_s,1}( P_{i,r})u^r\cdot v_{\lambda_s}=(1-z_s^{-1}u)^{\lambda_s(\alpha_i^\vee)}v_{\lambda_s}.$$
From Lemma \ref{tp} we obtain that 
$$
\sum_{r\geq 0} P_{i,r}u^r\cdot v_{\lambda_0}\otimes v_{\lambda_1}\otimes \cdots\otimes v_{\lambda_k}= \pi_i(u)v_{\lambda_0}\otimes v_{\lambda_1}\otimes \cdots\otimes v_{\lambda_k},
$$
or, equivalently, 
$$
P_{i,r}\cdot v_{\lambda_0}\otimes v_{\lambda_1}\otimes \cdots\otimes v_{\lambda_k}=a_{i,r}v_{\lambda_0}\otimes v_{\lambda_1}\otimes \cdots\otimes v_{\lambda_k}, \quad 1\le i\le n,~ 1\le r\le\lambda(\alpha_i^\vee).
$$
Since
$$\lie C\lie n^+\cdot v_{\lambda_0}\otimes v_{\lambda_1}\otimes \cdots\otimes v_{\lambda_k}=0,\ \ h\cdot v_{\lambda_0}\otimes v_{\lambda_1}\otimes \cdots\otimes v_{\lambda_k}=\lambda(h)v_{\lambda_0}\otimes v_{\lambda_1}\otimes \cdots\otimes v_{\lambda_k},$$ for all $h\in\lie h$,  and 
$$
x_{-\a_i}^{\lambda(\alpha_i^\vee)+1}\cdot v_{\lambda_0}\otimes v_{\lambda_1}\otimes \cdots\otimes v_{\lambda_k}=0, \quad 1\leq i\leq n,
$$
we conclude that $V$ is a quotient of $W(\lambda)$  with $w_\l$ being mapped to $v_{\lambda_0}\otimes v_{\lambda_1}\otimes \cdots\otimes v_{\lambda_k}$.

Let $\bof\in \tilde\ba_\lambda$, a non-zero element. There is maximal ideal that does not contain $\bof$, therefore there exist $a_{i,r}\in\bc$ for $1\le i\le n$, $1\le r\le\lambda(\alpha_i^\vee)$ such that under the evaluation map sending $P_{i,r}$ to $a_{i,r}$, $1\le i\le n$, $1\le r\le\lambda(\alpha_i^\vee)$, $\bof$ is mapped to a non-zero scalar $\gamma$. Applying the above construction for the scalars $a_{i,r}$, we obtain that there exist a quotient of $W(\lambda)$ on which $\bof$ acts on a non-zero vector by scaling with $\gamma$. We conclude that $\bof$ acts non-trivially on $w_\l$, or, equivalently, $\bof \notin\ann w_\lambda$.   In other words the image of $\bof $ in $\ba_\lambda$ is non-zero and  $\tilde{\ba}_\lambda\cong\ba_\lambda$. The proof of Theorem \ref{alambda} is complete.


\section{Global and Local Weyl modules } \label{six}

\subsection{}
The goal of this section is to show  that the global Weyl module $W(\lambda)$ is a finitely generated $\ba_\lambda$-module. Together with Theorem \ref{alambda} this proves that any local Weyl module $W(\lambda, \bi)$ is finite--dimensional and we obtain a lower bound for this dimension.


\subsection{}

We start by recording some useful relations in $\bu(\CG)$.
\begin{lem}\label{relations}
Let $r\in \bz_+$. Then,
\begin{enumerit}
\item[(i)] For $\alpha \in R^+_s$ , we have 
\begin{equation}\label{1}x_{\alpha+\delta}^{(r)}x_{-\alpha}^{(r+1)}-\left(
x_{-\alpha+r\delta}+\sum^{r}_{s=1}x_{-\alpha+(r-s)\delta}P_{\alpha,s}\right) \in\bu(\CG)\lie C\lie n^+.\end{equation}
\item[(ii)] For $\alpha \in R^+_\ell$, we have
\begin{equation}\label{3}x_{\alpha+2\delta}^{(r)}x_{-\alpha}^{(r+1)}-\left(
x_{-\alpha+2r\delta}+\sum^{r}_{s=1}x_{-\alpha+2(r-s)\delta}P_{\alpha,s}\right) \in\bu(\CG)\lie C\lie n^+,\end{equation}
and
\begin{equation}\label{2}x_{\frac 12 (\alpha+\delta)}^{(2r+1)}x_{-\alpha}^{(r+1)}-\left( x_{\frac
12(-\alpha+(2r+1)\delta)}+\sum^{r}_{s=1}x_{\frac12(-\alpha+(2(r-s)+1)\delta)}P_{\frac 12\alpha,s}\right) \in \bu(\CG)\lie
C\lie n^+.\end{equation}
\end{enumerit}
\end{lem}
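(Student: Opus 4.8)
\textbf{Proof plan for Lemma \ref{relations}.}

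The plan is to verify all three identities by working inside the rank-one (or rank-two) current subalgebras furnished by Proposition \ref{sp2}, where the computations reduce to known facts for $\lie{sl}_2\otimes\bc[t]$ and for the current algebra of type $A_2^{(2)}$. For part (i), fix $\alpha\in R_s^+$. By Proposition \ref{sp2}(ii) the elements $\{x_{\pm\alpha+m\delta}: m\in\bz_+,\ \alpha+m\delta\in\widehat R\}$ together with the $h$-elements span a copy of $\lie{sl}_2[t]$ inside $\CG$, with $x_{\alpha}$ playing the role of $e\otimes 1$ (up to the grading shift by $\delta$) and $x_{-\alpha}$ that of $f\otimes 1$. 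Inside $\lie{sl}_2[t]$ the standard Garland-type identity states that, modulo $\bu(\lie{sl}_2[t])(\lie n^+[t])$, one has
$$
(e\otimes t)^{(r)}(f\otimes 1)^{(r+1)}\equiv (f\otimes t^r)+\sum_{s=1}^r (f\otimes t^{r-s})\Lambda_s,
$$
where $\Lambda_s$ is Garland's element in $\bu(\lie h[t])$; this is exactly \cite[\S1]{CP01}. Translating through the identification of Proposition \ref{sp2}(ii) — which sends $\Lambda_s$ to $P_{\alpha,s}$ by the very definition of $P_{\alpha,s}$ and by Lemma \ref{garl} — gives \eqref{1}. The only point to check is that the ``$\lie n^+$'' of this $\lie{sl}_2[t]$ is contained in $\lie C\lie n^+$, which is immediate since all the relevant positive root vectors $x_{\alpha+m\delta}$ lie in $\lie C\lie n^+$ by Proposition \ref{sp2}(i).

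For part (ii), fix $\alpha\in R_\ell^+$. Here Proposition \ref{sp2}(iii) gives a copy of the current algebra of type $A_2^{(2)}$ spanned by the $x_{\frac12(\pm\alpha+(2r+1)\delta)}$ and $x_{\pm\alpha+2r\delta}$. I would invoke the $A_2^{(2)}$ analogue of Garland's identities, which is precisely \cite[Corollary 5.39]{F-V98} (equivalently \cite[Lemma 3.3]{CFS08}): the first identity there, involving the ``long'' root vectors, yields \eqref{3} with $P_{\alpha,s}$ in place of the corresponding $\bu(\CH_+)$-element, and the second identity, involving the ``short'' (i.e. half-) root vectors $x_{\frac12(\pm\alpha+(2r+1)\delta)}$, yields \eqref{2} with $P_{\frac12\alpha,s}$. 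Again the definitions of $P_{\alpha,s}$ and $P_{\frac12\alpha,s}$ were set up (via the projection $\bop$ and the recurrences recorded after Lemma \ref{garl}) precisely so that the elements appearing on the left of the cited identities match $P_{\alpha,s}$ and $P_{\frac12\alpha,s}$; and the positive root vectors occurring are elements of $\lie C\lie n^+$ by Proposition \ref{sp2}(i), so the error terms land in $\bu(\CG)\lie C\lie n^+$ as claimed.

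The main obstacle I anticipate is purely bookkeeping: making sure the grading shifts (the extra $\delta$ in $x_{\alpha+\delta}$, the extra $2\delta$ in $x_{\alpha+2\delta}$, and the $\tfrac12(\alpha+\delta)$ in the half-root case) are correctly accounted for when transporting the abstract $\lie{sl}_2[t]$ and $A_2^{(2)}$ identities into $\CG$, and that the PBW-projection $\bop$ onto $\bu(\CH)$ used to define the $P$'s agrees with the projection modulo $\bu(\CG)\lie C\lie n^+$ implicit in the cited lemmas. Since $\lie C\lie n^+\supset \widehat{\lie g}_{k\delta}$-complement is not an issue (the $h_{i,k\delta}$ are genuinely in $\CH_+$, not in $\lie C\lie n^+$), one must be slightly careful that ``modulo $\bu(\CG)\lie C\lie n^+$'' is the correct ambient congruence rather than ``modulo $\lie C\lie n^- \bu(\CG)+\bu(\CG)\lie C\lie n^+$''; but since the left-hand sides of \eqref{1}–\eqref{2} already lie in $\bu(\CH)\oplus \bu(\CG)\lie C\lie n^+$ after applying the $\lie{sl}_2[t]$/$A_2^{(2)}$ identities, no cancellation from the $\lie C\lie n^-$ side is needed, and the statement follows as written.
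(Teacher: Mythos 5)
Your proposal is correct and follows essentially the same route as the paper: reduce to the rank-one subalgebras of Proposition \ref{sp2} and quote the Garland-type identities for $\lie{sl}_2[t]$ (from \cite{G78}, \cite{CP01}) and for the $A_2^{(2)}$ current algebra (from \cite{F-V98}, cf.\ \cite{CFS08}), noting that the relevant positive root vectors lie in $\lie C\lie n^+$. The only cosmetic difference is bookkeeping of references: the paper deduces \eqref{3} directly from Proposition \ref{sp2}(ii) together with Garland, and cites \cite[Lemma 5.36]{F-V98} (rather than Corollary 5.39, which is used for the recurrence in Lemma \ref{garl}) for the half-root identity \eqref{2}.
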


\begin{pf}
Part (i) and the first statement in part (ii)   is  deduced from Proposition~\ref{sp2}(ii) and
\cite{G78} and a more detailed proof can be found in \cite{CP01}. The proof of the second assertion in (ii) is a similar  combination of Proposition~\ref{sp2}(iii) and \cite[
Lemma 5.36]{F-V98}.
\end{pf}


\subsection{} We are now ready to prove one of the main results of this section.
\begin{prop}\label{fin-gen} Let $\lambda\in P^+$. The global Weyl module $W(\lambda)$ is a finitely generated $\ba_\lambda$--module. In particular, if $\bi$ is a maximal ideal in $\ba_\lambda$, then the local Weyl module $W(\lambda, \bi)$ is finite--dimensional.
\end{prop}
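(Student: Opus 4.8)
The plan is to show that $W(\lambda)$ is generated as a $\ba_\lambda$-module by finitely many weight vectors, namely by the finitely many weight spaces $W(\lambda)_\mu$ with $\mu\in\lambda-\bar Q^+$ lying in the convex hull of $W\lambda$ — but more precisely by a spanning set obtained from the PBW theorem. First I would use the triangular decomposition $\CG=\lie C\lie n^-\oplus\CH\oplus\lie C\lie n^+$ together with $\lie C\lie n^+ w_\lambda=0$ and $\CH w_\lambda\subseteq w_\lambda\ba_\lambda$ to write $W(\lambda)=\bu(\lie C\lie n^-)w_\lambda\ba_\lambda$, so it suffices to prove that $\bu(\lie C\lie n^-)w_\lambda$ is spanned over $\ba_\lambda$ by finitely many monomials in the root vectors $x_{-\alpha}$, $\alpha\in\widehat R_{\rm re}(+)$.

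The key step is a reduction-of-degree argument, exactly in the spirit of \cite{CP01, CFK10}. Using Proposition~\ref{basis}, a PBW monomial in $\bu(\lie C\lie n^-)$ is (up to reordering, which introduces lower terms) a product of $x_{-\alpha+r\delta}$ for $\alpha\in R_s^+$, of $x_{-\alpha+2r\delta}$ and $x_{\frac12(-\alpha+(2r+1)\delta)}$ for $\alpha\in R_\ell^+$. For each fixed $\alpha$ there are only finitely many positive real roots of the form $-\alpha+k\delta$ with $k$ small; I must bound the exponent $r$ that can occur with nonzero effect on $w_\lambda$. The relations in Lemma~\ref{relations} do precisely this: modulo $\bu(\CG)\lie C\lie n^+$ (which kills $w_\lambda$), $x_{\alpha+\delta}^{(r)}x_{-\alpha}^{(r+1)}$ rewrites $x_{-\alpha+r\delta}$ as a $\ba_\lambda$-combination of $x_{-\alpha+(r-s)\delta}$ with smaller $\delta$-shift, and similarly for the long-root relations \eqref{3} and \eqref{2}. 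Since $x_{\alpha+\delta}^{(r)}w_\lambda=0$ for $r\ge\lambda(\alpha^\vee)+1$ (from the defining relations of $W(\lambda)$, as $x_{\alpha+\delta}$ generates, together with $x_{-\alpha}$, a copy of $\lie{sl}_2[t]$ by Proposition~\ref{sp2}(ii), resp.\ an $A_2^{(2)}$-current algebra by Proposition~\ref{sp2}(iii)), I would invoke the standard $\lie{sl}_2[t]$ (resp.\ $A_2^{(2)}$) argument to conclude that $x_{-\alpha+r\delta}w_\lambda$, $x_{-\alpha+2r\delta}w_\lambda$, and $x_{\frac12(-\alpha+(2r+1)\delta)}w_\lambda$ all lie in $\sum_{k<r}x_{-\alpha+\ast}\bu(\CH_+)w_\lambda$ once $r$ exceeds an explicit bound depending only on $\lambda(\alpha^\vee)$. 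Applying this finitely often across all $\alpha\in R^+$ shows that $\bu(\lie C\lie n^-)w_\lambda$ is contained in the $\ba_\lambda$-span of the (finite) set of PBW monomials in the bounded-degree root vectors $x_{-\beta}$ with $\beta\in\widehat R_{\rm re}(+)$ of $\delta$-shift below the bound, where the exponent of each such $x_{-\beta}$ is also bounded (again by the $\lie{sl}_2[t]$ / $A_2^{(2)}$ theory applied to $x_{-\beta}$ itself). Hence $W(\lambda)$ is finitely generated over $\ba_\lambda$.

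Finally, since $\ba_\lambda$ is a finitely generated $\bc$-algebra by Theorem~\ref{alambda}, for a maximal ideal $\bi$ the quotient $\ba_\lambda/\bi$ is finite-dimensional (indeed, equal to $\bc$ by the Nullstellensatz, as $\ba_\lambda$ is a polynomial ring), and therefore $W(\lambda,\bi)=W(\lambda)\otimes_{\ba_\lambda}\ba_\lambda/\bi$ is a finite-dimensional $\CG$-module. The main obstacle is organizing the simultaneous reduction over all root vectors so that the PBW straightening (reordering monomials) does not reintroduce unbounded degrees: one must set up an induction on a suitable partial order on monomials — ordering first by total $\delta$-degree and then by the lexicographic order on the multiset of root vectors used — so that each application of a relation from Lemma~\ref{relations} strictly decreases the monomial in this order while the commutators that appear are genuinely lower, which is the bookkeeping heart of the argument and is where I would spend the most care.
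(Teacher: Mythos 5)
Your overall route is the paper's: write $W(\lambda)=\bu(\lie C\lie n^-)w_\lambda\ba_\lambda$, use the identities of Lemma~\ref{relations} to express $x_{-\alpha+r\delta}w_\lambda$ (and the long-root and half-root analogues) as $\ba_\lambda$-combinations of vectors with bounded $\delta$-shift, handle general monomials by straightening, and finish the local statement via $\ba_\lambda/\bi\cong\bc$. The genuine gap is in the straightening step, which you flag yourself but whose proposed resolution does not work as stated. The congruences of Lemma~\ref{relations} hold only modulo $\bu(\CG)\lie C\lie n^+$, so they can be applied only to the factor sitting directly against $w_\lambda$; to reduce an inner factor with large $\delta$-shift you must first commute it rightward, and the main term of each such swap has exactly the same total $\delta$-degree and the same multiset of root vectors as the monomial you started from. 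Hence your proposed order (total $\delta$-degree, then lexicographic order on the multiset) does not strictly decrease under the moves you actually need, and termination of your induction is not established. (A smaller slip: the vanishing that drives the bound is $x_{-\alpha}^{(r+1)}w_\lambda=0$ for $r\ge\lambda(\alpha^\vee)$, coming from the defining relation $x_{-\alpha}^{\lambda(\alpha^\vee)+1}w_\lambda=0$; the statement $x_{\alpha+\delta}^{(r)}w_\lambda=0$ holds trivially for all $r\ge 1$ because $x_{\alpha+\delta}\in\lie C\lie n^+$, and is not what limits $r$.)

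The paper closes this gap with simpler bookkeeping, which you could adopt. Since $\wt W(\lambda)$ is a finite $W$-invariant subset of $\lambda-\bar Q^+$, every monomial $x_{\beta_m}\cdots x_{\beta_1}w_\lambda$ with $\beta_j\in\widehat R_{\rm re}(-)$ and $m$ larger than some $M$ vanishes, so only monomials of length at most $M$ are needed; this single observation also replaces your separate argument bounding exponents. One then inducts on the length $m$: the case $m=1$ is your Lemma~\ref{relations} reduction, and for $m>1$ the inductive hypothesis lets you assume $\beta_1,\dots,\beta_{m-1}$ already lie in the finite set $\widehat R_{\rm{re}}(\lambda)$ of roots with bounded $d$-eigenvalue; then one commutation, $x_{\beta_m}x_{\beta_{m-1}}\cdots x_{\beta_1}w_\lambda=x_{\beta_{m-1}}\bigl(x_{\beta_m}x_{\beta_{m-2}}\cdots x_{\beta_1}w_\lambda\bigr)+[x_{\beta_m},x_{\beta_{m-1}}]x_{\beta_{m-2}}\cdots x_{\beta_1}w_\lambda$, reduces everything to length-$(m-1)$ monomials (note $[x_{\beta_m},x_{\beta_{m-1}}]$ is zero or again a root vector for a root in $\widehat R_{\rm re}(-)$), to which the hypothesis applies, and applying the good root vector $x_{\beta_{m-1}}$ keeps the length at most $M$. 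With this replacement for your monomial order, your argument is complete and coincides with the paper's.
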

\begin{pf}  Notice  that  $\wt W(\lambda)$ is a finite set since it is a $W$--invariant subset of $ \lambda-\bar Q^+$. This means that there exists an integer $M\in\bn$ such that  $W(\lambda)$ is generated as a right $\ba_\lambda$--module by the element $w_\lambda$ and elements of the set $\{x_{\beta_m}\cdots x_{\beta_1} w_\lambda: \beta_j\in \widehat R_{\rm{re}}(-),\  1\le m\le M\}$. 

  Let $$\widehat R_{\rm{re}}(\lambda)=\{\beta\in \widehat R_{\rm{re}}(-)~:~ [d, x_\beta]\le N(\lambda)\},\qquad  N(\lambda)=\max\{4\lambda(\alpha^{\vee})+1~:~ \alpha\in R^+\}.$$ Since $\widehat R_{\rm{re}}(\lambda)$ is a finite set,  our claim follows if we prove that $W(\lambda)$ is generated by $w_\lambda$ and the elements of the set 
$$S=\{x_{\beta_m}\cdots x_{\beta_1} w_\lambda: \beta_j\in \widehat R_{\rm{re}}(\lambda),\  1\le m\le M\}.$$

We will show that $x_{\beta_m}\cdots x_{\beta_1} w_\lambda\in S$, for any $\beta_j\in \widehat R_{\rm{re}}(-),\  1\le m\le M$. We proceed by induction on $m$. Let  $m=1$ and $\beta \in\widehat R_{\rm{re}}(-)$, say $\beta=-\alpha+ s\delta$ with $s\in\bz_+$. Using  Lemma \ref{relations}(i,ii) along with the fact that  $x^{\lambda(\alpha^{\vee})+1}_{-\alpha}w_{\lambda}=0$ we  see that $x_{-\alpha+\lambda(\alpha_i^\vee)\delta}w_\lambda=0$ if $\alpha\in R_s^+$ and $x_{-\alpha+2\lambda(\alpha_i^\vee)\delta}w_\lambda=0$ if $\alpha\in R_\ell^+$. Therefore, $x_{-\alpha+s\delta}w_\lambda$ is in the $\ba_\lambda$--module generated by elements $x_{-\alpha+k\delta}w_\lambda$ with $0\le  k\leq 2\lambda(\alpha^\vee)$. By using Lemma \ref{relations}(ii), a similar argument shows that if $\beta=-\frac 12\alpha+(s+\frac 12)\delta$ then $x_{-\frac 12\alpha+(s+\frac 12)\delta}w_\lambda$ is the $\ba_\lambda$--module generated by elements $x_{-\frac 12\alpha+(k+\frac 12)\delta}w_\lambda$ with $0\le  k\leq \lambda(\alpha^\vee)$.

Let $m>1$ and assume that we have proved the assertion for monomials of length at most $m-1$. Consider an element $x_{\beta_m}\cdots x_{\beta_1} w_\lambda$. By the inductive hypothesis, we can assume that $\beta_j\in \widehat R_{\rm{re}}(\lambda)$ for $1\le j\le m-1$. Writing,
$$x_{\beta_m}\cdots x_{\beta_1} w_\lambda=x_{\beta_{m-1}} x_{\beta_m} x_{\beta_{m-2}}\cdots x_{\beta_1} w_\lambda + [x_{\beta_m}, x_{\beta_{m-1}}]x_{\beta_{m-2}}\cdots x_{\beta_1} w_\lambda,$$
and using the induction hypothesis on the two terms on the right completes the proof.
\end{pf}


\subsection{} We shall now study the local Weyl modules in more detail. Let  $\lambda\in P^+$ and let $\bi$ be a maximal ideal in $\ba_\lambda$. By Theorem \ref{alambda} we know that any maximal ideal $\bi$ in $\ba_\lambda$ is  generated by elements $P_{i,r}-a_{i,r}$ for some $a_{i,r}\in\bc$ for $1\le i\le n$, $\ 1\le r\le\lambda(\alpha_i^\vee)$. The augmentation ideal $\bi_{\lambda,0}$  corresponds to taking $a_{i,r}=0$ for all $i,r$. 

Set 
$$
\tilde{\pi_i}(u)=1+\sum_{r\ge 1}^{\lambda(\alpha_i^\vee)}a_{i,r}u^r, \quad 1\le i\le n.
$$  
Let $z_1,\dots, z_k$ be a set of representatives for the set of distinct roots of $\tilde{\pi}_1\cdots\tilde{\pi}_n$ up to sign. Let $r_{i,s},~m_{i,s}\in \mathbb{Z}_+$ be the multiplicity   in $\tilde{\pi}_i$ of $z_s$ and $-z_s$, respectively.  Define $\bpi=(\pi_1,\dots, \pi_{2n})$ by, 
\begin{equation*}
\begin{aligned}
&{\pi}_i(u)=\prod_{s=1}^k(1-z_s^{-1}u)^{r_{i,s}}, &&1\le i\le n, \\
&{\pi}_i(u)= \prod_{s=1}^k(1-z_s^{-1}u)^{m_{2n+1-i,s}}, && n+1\le i\le 2n.
\end{aligned}
\end{equation*}
Denote $\boz=(z_1,\dots, z_k)$ and $\mu=\lambda-\sum_{i=1}^n \sum^k_{s=1}(r_{i,s}+m_{i,s})\omega_i$.

\begin{prop}\label{weylsl} 
With the notation above, for a sufficiently large integer  $N$, there is a surjective morphism of $\CG$--modules $$W(\lambda,\bi)\to W(\mu,\bi_{\mu,0})\otimes \Psi^*_{\boz, N}  W_{2n+1}(\bpi).$$
Furthermore,
$$
\dim W(\lambda,\bi)\ge \dim W(\mu,\bi_{\mu,0}) \prod_{i=1}^{2n}\binom{2n+1}{i}^{(\lambda-\mu)(\alpha_i^\vee)}.
$$

\end{prop}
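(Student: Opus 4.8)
The plan is to realize the right‑hand module as $\bu(\CG)v$ for a single vector $v$ and then to identify the $\bu(\CH_+)$‑annihilator of $v$. Set $M=W(\mu,\bi_{\mu,0})\otimes\Psi^*_{\boz,N}W_{2n+1}(\bpi)$, let $w_{\mu,0}$ denote the image of $w_\mu$ in $W(\mu,\bi_{\mu,0})$ and $w_\bpi$ the canonical generator of $W_{2n+1}(\bpi)$, and put $v=w_{\mu,0}\otimes w_\bpi$. First some preliminaries. Since $\deg\tilde{\pi}_i=\sum_s(r_{i,s}+m_{i,s})\le\lambda(\alpha_i^\vee)$ we have $\mu\in P^+$, so $W(\mu,\bi_{\mu,0})$ is defined; being graded and, by \propref{fin-gen}, finite dimensional, it is annihilated by the ideal $\lie z:=\bigoplus_{r\ge r_0}\CG[r]$ of $\CG$ once $r_0$ is large enough. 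All roots of $\pi_1\cdots\pi_{2n}$ lie among the $z_s$, which are pairwise distinct up to sign, so $\bpi$ satisfies the hypothesis of \propref{piw}, and $\sum_{i=1}^n(\deg\pi_i+\deg\pi_{2n+1-i})\omega_i=\lambda-\mu$. Hence \propref{piw} produces a maximal ideal $\bi_\bpi$ of $\ba_{\lambda-\mu}$ with $\Psi^*_{\boz,N}W_{2n+1}(\bpi)$ a quotient of $W(\lambda-\mu,\bi_\bpi)$; in particular $w_\bpi$ satisfies the highest weight relations $\lie C\lie n^+w_\bpi=0$, $hw_\bpi=(\lambda-\mu)(h)w_\bpi$, $x_{-\alpha_i}^{(\lambda-\mu)(\alpha_i^\vee)+1}w_\bpi=0$, and \eqref{piwpi} gives $\bigl(\sum_{r\ge0}P_{i,r}u^r\bigr)w_\bpi=\pi_i(u)\pi_{2n+1-i}(-u)w_\bpi$. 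Finally, after enlarging $N$ we may assume both that the $L(\lie{sl}_{2n+1})$‑action on $W_{2n+1}(\bpi)$ factors through $\bigoplus_{s=1}^k\lie{sl}_{2n+1}^{z_s,N}$ (\thmref{charietal}(iii)) and that the restriction of $\Psi_{\boz,N}$ to $\lie z$ is onto $\bigoplus_{s=1}^k\lie{sl}_{2n+1}^{z_s,N}$ (the Remark following \lemref{generalev}).

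With these in place I would verify that $v$ satisfies the defining relations of $W(\lambda)$: from $\lie C\lie n^+w_{\mu,0}=0=\lie C\lie n^+w_\bpi$ we get $\lie C\lie n^+v=0$; from $hw_{\mu,0}=\mu(h)w_{\mu,0}$ and $hw_\bpi=(\lambda-\mu)(h)w_\bpi$ we get $hv=\lambda(h)v$ for $h\in\lie h$; and since $x_{-\alpha_i}$ is primitive, expanding $x_{-\alpha_i}^{(\lambda(\alpha_i^\vee)+1)}v$ by the comultiplication and using $x_{-\alpha_i}^{\mu(\alpha_i^\vee)+1}w_{\mu,0}=0$ together with $x_{-\alpha_i}^{(\lambda-\mu)(\alpha_i^\vee)+1}w_\bpi=0$ (and $\mu(\alpha_i^\vee)+(\lambda-\mu)(\alpha_i^\vee)=\lambda(\alpha_i^\vee)$) gives $x_{-\alpha_i}^{\lambda(\alpha_i^\vee)+1}v=0$. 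So $\bu(\CG)v$ is a quotient of $W(\lambda)$ with $w_\lambda\mapsto v$. To pin down the ideal, \lemref{tp} evaluates $\bigl(\sum_{r\ge0}P_{i,r}u^r\bigr)v$ as the product of $\bigl(\sum_r P_{i,r}u^r\bigr)w_{\mu,0}=w_{\mu,0}$ (augmentation ideal) with $\bigl(\sum_r P_{i,r}u^r\bigr)w_\bpi=\pi_i(u)\pi_{2n+1-i}(-u)w_\bpi$; since $\pi_i(u)\pi_{2n+1-i}(-u)=\prod_s(1-z_s^{-1}u)^{r_{i,s}}(1+z_s^{-1}u)^{m_{i,s}}=\tilde{\pi}_i(u)$, this gives $P_{i,r}v=a_{i,r}v$ for all $i,r$, i.e. $\tilde\bi v=0$, and hence $\bu(\CG)v$ is in fact a quotient of $W(\lambda,\bi)$.

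Next, $\bu(\CG)v=M$. The elements of $\lie z$ are primitive and $\lie z w_{\mu,0}=0$, so $x(w_{\mu,0}\otimes m)=w_{\mu,0}\otimes xm$ for $x\in\lie z$ and all $m$, whence $\bu(\lie z)v=w_{\mu,0}\otimes\bu(\lie z)w_\bpi$; since $\Psi_{\boz,N}(\lie z)=\bigoplus_s\lie{sl}_{2n+1}^{z_s,N}$ and $W_{2n+1}(\bpi)$ is generated by $w_\bpi$ over this algebra, $\bu(\lie z)w_\bpi=\Psi^*_{\boz,N}W_{2n+1}(\bpi)$, so $w_{\mu,0}\otimes m\in\bu(\CG)v$ for every $m$. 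For $x\in\CG$ the identity $x(w_{\mu,0}\otimes m)=xw_{\mu,0}\otimes m+w_{\mu,0}\otimes xm$ then forces $xw_{\mu,0}\otimes m\in\bu(\CG)v$ for all $m$, and an induction on the length of a PBW monomial $u$ gives $uw_{\mu,0}\otimes m\in\bu(\CG)v$ for all $u\in\bu(\CG)$ and all $m$; since $W(\mu,\bi_{\mu,0})=\bu(\CG)w_{\mu,0}$, this yields $M\subseteq\bu(\CG)v$. Hence there is a surjection $W(\lambda,\bi)\twoheadrightarrow M$, and $\dim W(\lambda,\bi)\ge\dim M=\dim W(\mu,\bi_{\mu,0})\cdot\dim W_{2n+1}(\bpi)$. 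By \thmref{charietal}(i), together with $\binom{2n+1}{i}=\binom{2n+1}{2n+1-i}$ and $\deg\pi_i+\deg\pi_{2n+1-i}=(\lambda-\mu)(\alpha_i^\vee)$ for $1\le i\le n$ (the remaining factors being trivial since $(\lambda-\mu)(\alpha_i^\vee)=0$ for $i>n$), one gets $\dim W_{2n+1}(\bpi)=\prod_{i=1}^{2n}\binom{2n+1}{i}^{(\lambda-\mu)(\alpha_i^\vee)}$, which is the claimed bound.

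The step I expect to be the main obstacle is the cyclicity $\bu(\CG)v=M$: it hinges on the disjointness of the ``supports'' of the two tensor factors — the current algebra part $W(\mu,\bi_{\mu,0})$ is supported at $t=0$ while the loop algebra part is supported at the nonzero points $z_s$ — and therefore on the surjectivity assertion of \lemref{generalev}. It is exactly here that one is forced to impose $z_s\ne\pm z_p$ (built into the choice of the $z_s$ as representatives up to sign), which is the feature distinguishing the hyperspecial type $A_{2n}^{(2)}$ case from the untwisted and the other twisted situations.
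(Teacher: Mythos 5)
Your proposal is correct and follows essentially the same route as the paper: use the surjectivity of $\Psi_{\boz,N}$ restricted to high-degree components (the Remark after Lemma \ref{generalev}) together with the finite-dimensionality and gradedness of $W(\mu,\bi_{\mu,0})$ to show the tensor product is cyclic on $w_{\mu,0}\otimes w_\bpi$, verify the defining relations of $W(\lambda,\bi)$ via Lemma \ref{tp} and \eqref{piwpi}, and deduce the dimension bound from Proposition \ref{piw} and Theorem \ref{charietal}(i). Your write-up merely makes explicit some steps the paper leaves implicit (e.g.\ the induction proving cyclicity and the identity $\pi_i(u)\pi_{2n+1-i}(-u)=\tilde\pi_i(u)$), and the relation $x_{-\alpha}^{\lambda(\alpha^\vee)+1}v=0$ for non-simple $\alpha\in R^+$, which you only check for simple roots, follows automatically by $\lie{sl}_2$-theory since the target module is finite dimensional.
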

\begin{pf} By Proposition \ref{piw}, we see that  $\Psi^*_{\boz, N}  W_{2n+1}(\bpi)$ is generated as a $\CG$--module by $w_\bpi$. In fact, more generally, by using Remark \ref{pullbacks}, we see that for any $r\in\bz_+$, we have $$ \Psi^*_{\boz, N}  W_{2n+1}(\bpi)= \bu\left(\oplus_{s\ge r}\CG[s]\right) w_\bpi.$$ Since $W(\mu, \bi_{\mu,0})$ is finite dimensional graded $\CG$--module it follows that 
$$\CG[N]W(\mu, \bi_{\mu,0})= 0,
$$
for $N$ sufficiently large. Hence,   $W(\mu,\bi_{\mu,0})\otimes \Psi^*_{\boz, N}  W_{2n+1}(\bpi)$ is generated as a $\CG$--module by the element  $w= w_\mu\otimes w_\bpi$. Our first claim follows with the help of equation \eqref{piwpi} by noting that $w$  satisfies the defining relations of $W(\lambda, \bi)$. The second claim now follows from Proposition \ref{piw}.
\end{pf}


\subsection{} We are now ready to prove an important dimensional inequality for local Weyl modules.
\begin{prop}\label{dimineq} Let $\lambda \in P^+$ and   let $\bi$ be a maximal ideal in $\ba_\lambda$. We have $$\dim W(\lambda,\bi_{\lambda,0})\ge \dim W(\lambda,\bi_{})\ge \prod_{i=1}^n\binom{2n+1}{i}^{\lambda(\alpha_i^\vee)},$$ where $\bi_{\lambda,0}$ is the augmentation ideal of $\ba_\lambda$.
\end{prop}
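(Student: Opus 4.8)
The plan is to prove the two inequalities separately, the first being essentially soft and the second being the substantive content already assembled in the preceding propositions. For the inequality $\dim W(\lambda,\bi_{\lambda,0})\ge \dim W(\lambda,\bi)$, I would argue by a standard upper-semicontinuity / specialization argument: view the family $\{W(\lambda,\bi)\}$ over the affine space $\Max(\ba_\lambda)\cong \mathbb{A}^{\sum_i\lambda(\alpha_i^\vee)}$ (using Theorem~\ref{alambda}), and observe that since $W(\lambda)$ is a finitely generated $\ba_\lambda$-module (Proposition~\ref{fin-gen}), each graded piece $W(\lambda)_\mu$ is a finitely generated module over the Noetherian graded ring $\ba_\lambda$, so $\dim_\bc W(\lambda)_\mu\otimes_{\ba_\lambda}\ba_\lambda/\bi$ is upper semicontinuous in $\bi$ and attains its maximum on the closed point cut out by a graded maximal ideal, which is precisely $\bi_{\lambda,0}$. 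Summing over the finitely many $\mu\in\wt W(\lambda)$ gives the claim. Alternatively, one can give a degeneration argument: choose a one-parameter family of ideals $\bi_c$ with $\bi_1=\bi$ and $\bi_0=\bi_{\lambda,0}$ (scaling the parameters $a_{i,r}\mapsto c^{2r}a_{i,r}$ uses exactly the grading, since $P_{i,r}$ has grade $2r$), and note that the associated graded of $W(\lambda,\bi)$ with respect to the grading filtration is a graded quotient of $W(\lambda,\bi_{\lambda,0})$, so has dimension at most $\dim W(\lambda,\bi_{\lambda,0})$.

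For the inequality $\dim W(\lambda,\bi)\ge \prod_{i=1}^n\binom{2n+1}{i}^{\lambda(\alpha_i^\vee)}$, the idea is to reduce to the situation of Proposition~\ref{weylsl} and then iterate down to the rank-one / $\lambda=0$ base case, or more directly, to choose a particularly favorable maximal ideal $\bi'$ for which Proposition~\ref{piw} applies directly and gives the bound exactly, and then combine with the first inequality. Concretely: pick scalars $a_{i,r}$ so that $\tilde\pi_i(u)=\prod_{s=1}^k(1-z_s^{-1}u)^{\lambda(\alpha_i^\vee)}$ with the $z_s$ pairwise distinct \emph{and} satisfying $z_s\ne -z_p$ for all $s,p$ — e.g. all $z_s$ positive reals. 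Then $\bpi=(\pi_1,\dots,\pi_{2n})$ with $\pi_i(u)=\prod_s(1-z_s^{-1}u)^{\lambda(\alpha_i^\vee)}$ for $i\le n$ and $\pi_i=1$ for $i>n$ satisfies the hypothesis of Proposition~\ref{piw} (at most one of $z,-z$ is a root of $\pi_1\cdots\pi_{2n}$), and that proposition yields a maximal ideal $\bi_\bpi$ of $\ba_\lambda$ with
$$\dim W(\lambda,\bi_\bpi)\ \ge\ \prod_{i=1}^{2n}\binom{2n+1}{i}^{\deg\pi_i}\ =\ \prod_{i=1}^{n}\binom{2n+1}{i}^{\lambda(\alpha_i^\vee)},$$
using $\binom{2n+1}{i}=\binom{2n+1}{2n+1-i}$ and $\deg\pi_i=\lambda(\alpha_i^\vee)$ for $i\le n$, $\deg\pi_i=0$ for $i>n$. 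By the first inequality, $\dim W(\lambda,\bi_{\lambda,0})\ge \dim W(\lambda,\bi_\bpi)$, and then for an arbitrary $\bi$ one more application — or rather, one notes that Proposition~\ref{weylsl} applied to a general $\bi$ gives $\dim W(\lambda,\bi)\ge \dim W(\mu,\bi_{\mu,0})\prod_i\binom{2n+1}{i}^{(\lambda-\mu)(\alpha_i^\vee)}$, and inducting on the height of $\lambda$ with the base case $\mu=0$ (where $W(0,\bi_{0,0})=\bc$) closes the argument, since $(\lambda-\mu)(\alpha_i^\vee)+\mu(\alpha_i^\vee)=\lambda(\alpha_i^\vee)$.

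The main obstacle I anticipate is making the first inequality fully rigorous in the graded setting: one must be careful that the grading on $\ba_\lambda$ has all generators in positive even degree, so that the "rescaling toward the origin" degeneration is well-defined and the special fiber is genuinely the graded point $\bi_{\lambda,0}$; this is exactly where Theorem~\ref{alambda} (grade of $P_{i,r}$ equal to $2r>0$) is essential, and the argument would fail for a general filtered deformation. The second inequality, by contrast, is mostly bookkeeping: the one point requiring care is the symmetry $\binom{2n+1}{i}=\binom{2n+1}{2n+1-i}$ together with the pairing of $\deg\pi_i$ and $\deg\pi_{2n+1-i}$ into $\lambda(\alpha_i^\vee)$, and ensuring the chosen $\boz$ really does satisfy the sign-genericity hypothesis of Proposition~\ref{piw} so that Lemma~\ref{generalev} applies. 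I would therefore present the second inequality first as the "real" content, deduce the $\bi_{\lambda,0}$ case, and then get the general $\bi$ either from Proposition~\ref{weylsl} by induction or directly from the semicontinuity statement.
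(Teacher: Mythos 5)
Your proposal is correct and matches the paper's argument in essentials: the paper proves the first inequality by precisely your degeneration alternative (the filtration $V^k=\bigoplus_{s\le k}\bu(\CG)[s]v$ of a cyclic module, whose associated graded satisfies $P_{i,r}\bar w_{\lambda,\bi}=0$ because $P_{i,r}$ has positive grade $2r$, so $\gr W(\lambda,\bi)$ is a quotient of $W(\lambda,\bi_{\lambda,0})$ of the same dimension), and the second inequality via Proposition~\ref{weylsl} together with induction on $\sum_i\lambda(\alpha_i^\vee)$, exactly as in your closing paragraph. Your explicit sign-generic ideal $\bi_\bpi$ from Proposition~\ref{piw}, fed through the first inequality, is a clean way of handling the graded case ($\mu=\lambda$, i.e.\ $\bi=\bi_{\lambda,0}$) that the paper's induction leaves implicit, and it is the rigorous substitute for your bare semicontinuity claim, which on its own does not place the maximum at $\bi_{\lambda,0}$ without the positive-grading ($\mathbb{C}^\times$-contraction) point you yourself flag.
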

\begin{pf}
We recall a general construction from \cite{FL99}. Let $\bu(\CG)[k]$ be the homogeneous component of degree $k$  and recall that it is  a $\lie g$--module for all $k\in\mathbb{Z}_+$. Suppose now that we are given a $\CG$--module $V$ which is generated by $v$. Define an increasing filtration $0\subset V^0\subset V^1\subset\cdots $ of $\lie g$-submodules of $V$ by$$V^k=\bigoplus_{s=0}^k \bu(\CG)[s] v.$$ The associated graded vector space  $\gr V$ admits an action of $\CG$ given by: 
$$
x(v+V^{k})= xv+ V^{k+s},\ \ x\in\CG[s],\ \ v\in V^{k+1}.
$$
Moreover, the image of $v$ in  $\gr V$  generates $\gr V$ as a $\CG$--module.

If $\dim V<\infty$ it is clear that $V$ and $\gr V$ are isomorphic  as $\lie g$-modules but in general not as $\CG$-modules.
Applying  this construction to the local Weyl module $W(\lambda,\bi)$ for a maximal ideal $\bi$, we see that in $\gr W(\lambda,\bi)$ we have, in addition to the relations of the global Weyl module, the relation $$P_{i,r}\bar w_{\lambda,\bi}=0$$ where $\bar w_{\lambda,\bi}$ is the image of $w_{\lambda,\bi}$ in $\gr W(\lambda,\bi)$.  Hence $\gr W(\lambda,\bi)$ is a quotient of $W(\lambda,\bi_{\lambda,0})$ and we have proved,
 $$\dim W(\lambda, \bi_{\lambda,0})\ge\dim\gr W(\lambda, \bi)=\dim W(\lambda,\bi).$$

The proof of the remaining inequality is completed by an induction on $\sum_i\lambda(\alpha_i^\vee)$  as follows. By Proposition \ref{weylsl} we have
$$
\dim W(\lambda,\bi)\ge \dim W(\mu,\bi_{\mu,0}) \prod_{i=1}^{2n}\binom{2n+1}{i}^{(\lambda-\mu)(\alpha_i^\vee)}.
$$
for some $\mu\in P^+$ with the property that $\lambda-\mu\in P^+$. If $\mu=0$, then this is the desired result, while if $\mu\ne 0$ our induction hypothesis applies and the inductive step follows.
\end{pf}

\section{Local Weyl modules and Demazure modules}\label{seven}

\subsection{} The goal of this section is to prove Theorem \ref{locWeyl=Dem} and Theorem \ref{mainthm}.

The strategy for approaching Theorem \ref{locWeyl=Dem} is the one from \cite[Theorem 7]{FoL07} which reduces the argument to rank one irreducible affine Lie algebras. The rank one irreducible affine Lie algebras are those of type $A_1^{(1)}$ and $A_{2}^{(2)}$. For the affine Lie algebra of type $A_1^{(1)}$ the statement is known \cite[Corollary 1.5.1]{CL06}; for the affine Lie algebra of type $A_{2}^{(2)}$ the proof is presented in Appendix \ref{a22Dem}.


\subsection{} We start by recalling the definition of the local Weyl module $W(\lambda,\bi_{\lambda,0})$. The local Weyl module $W(\lambda,\bi_{\lambda,0})$ is the $\CG$-module generated by an element $w_{\l, \bi_{\lambda,0}}$ with the relations
\begin{equation}\label{lWeylrels}
\begin{aligned}
&(\CN^+\oplus \CH_+) \cdot w_{\l, \bi_{\lambda,0}}=0,\\
&h\cdot w_{\l, \bi_{\lambda,0}}=\l(h)w_{\l, \bi_{\lambda,0}},~h\in\H,\\
&x_{-\a}^{(\l,\a^\vee)+1}\cdot w_{\l, \bi_{\lambda,0}}=0, ~ \a\in R^+ .
\end{aligned}
\end{equation}
Note that since the ideal $\bi_{\lambda,0}$ is homogeneous the module $W(\lambda,\bi_{\lambda,0})$ is graded.

Similarly, Demazure modules can also be presented  as cyclic modules that have an explicit description of the annihilator of the generating element. The description of the Demazure modules for finite dimensional simple Lie algebras  \cite[Theorem 3.4]{J85}, \cite[Proposition Fondamentale 2.1]{P89} was extended in \cite[Lemme 26]{M89} for Demazure modules associated to Kac-Moody Lie algebras. We record below the statement that is relevant for us. 

\begin{thm}
Let $\l\in P^+$. The Demazure module $D(-\l)$ is the $(\H\oplus \bc d)\oplus \Naff^+$-module generated by an element $e_{-\l}$ with the relations
\begin{equation}\label{b-Demrels}
\begin{aligned}
&x_\a^{k_\a+1} \cdot e_{-\l}=0,~ \a\in \widehat{R}_{\rm re}^+, ~k_\a=\max\{0,~-(-\l+\L_0,\a^\vee)\},\\
&\CH_+ \cdot e_{-\l}=0,\\
&h\cdot e_{-\l}=-\l(h)e_{-\l},~h\in\H\oplus \bc d.\\
\end{aligned}
\end{equation}
\end{thm}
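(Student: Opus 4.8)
This is a classical statement; in the paper it is quoted from \cite{J85,P89,M89} rather than reproved, and the argument I would follow is the one underlying those references. The plan is to split the claim into an easy surjection and a harder injection. For the surjection, let $\widetilde D$ be the $(\H\oplus\bc d)\oplus\Naff^+$-module presented by the relations \eqref{b-Demrels}, with cyclic generator $e$, and realise the genuine Demazure module as $D(-\l)=\bu(\Baff)\cdot v$, where $v$ spans the one-dimensional extremal weight space of $\widehat V(\L_0)$ of weight $-\l+\L_0$. One then checks that $v$ satisfies every relation imposed on $e$: the Cartan relations are immediate; $\CH_+\cdot v=0$ because the imaginary root spaces $\widehat{\lie g}_{k\d}$ are $\widehat W$-invariant and hence lie in every conjugate $w\Naff^+w^{-1}$, so they annihilate every extremal weight vector; and, for each $\a\in\widehat R^+_{\rm re}$, the relation $x_\a^{k_\a+1}v=0$ follows from $\mathfrak{sl}_2$-theory applied to the rank-one subalgebra spanned by $x_{\pm\a}$ and $\a^\vee$, since $v$ is an extremal vector of an integrable module of weight $-\l+\L_0$ and $k_\a=\max\{0,-(-\l+\L_0,\a^\vee)\}$ is precisely the largest exponent that can survive. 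This produces a surjection $\widetilde D\twoheadrightarrow D(-\l)$ of $(\H\oplus\bc d)\oplus\Naff^+$-modules.

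For the injection I would induct on length in $\widehat W$. Pick $w\in\widehat W$ with $w(\L_0)\equiv-\l+\L_0$ modulo $\bc\d$ — for $\l\in P^+$ one may take $w=t_{-\l}$, the translation by $-\l$ — and fix a reduced word $w=s_{i_1}\cdots s_{i_N}$; for each prefix $w_j=s_{i_1}\cdots s_{i_j}$ let $\widetilde D_j$ be the module presented by the analogue of \eqref{b-Demrels} with $-\l+\L_0$ replaced by $w_j(\L_0)$. The crucial input is the compatibility of this presentation with the Demazure functor $\mathcal D_i$ — which induces a $\Baff$-module up to a $\mathfrak p_i$-module and then passes to its maximal $\mathfrak{sl}_2^{(i)}$-integrable quotient: one shows that $\mathcal D_{i_{j+1}}$ carries $\widetilde D_j$ onto $\widetilde D_{j+1}$. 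Granting this — it is the content of \cite[Lemme 26]{M89}, extending \cite[Theorem 3.4]{J85} and \cite[Proposition Fondamentale 2.1]{P89} — and using the analogous recursion $D(w_{j+1}(\L_0))=\mathcal D_{i_{j+1}}\,D(w_j(\L_0))$ for the genuine Demazure modules (valid since lengths increase along the reduced word), a simultaneous induction on $j$ identifies $\widetilde D_j$ with $D(w_j(\L_0))$; the case $j=N$ is the theorem.

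The main obstacle is precisely this compatibility lemma, i.e.\ the assertion that imposing the single $\mathfrak{sl}_2^{(i)}$-integrability relation on the induced module $\bu(\mathfrak p_i)\otimes_{\bu(\Baff)}\widetilde D_j$ already produces exactly the presentation \eqref{b-Demrels} attached to $s_iw_j(\L_0)$, with no further relations needed. In Kac--Moody generality one must be careful in the imaginary directions and for the roots whose sign is reversed by $s_i$; this is handled in \cite{M89} by an excellent-filtration argument (Frobenius splitting in positive characteristic, or Joseph's algebraic substitute in characteristic zero). As an alternative to the inductive identification, one could compare characters: the Demazure character formula computes $\ch D(w_j(\L_0))$ by iterated Demazure operators, and it would suffice to show that $\ch\widetilde D_j$ is bounded above by the same expression — but producing that upper bound is essentially the same difficulty.
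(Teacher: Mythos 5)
The paper does not actually prove this theorem --- it records it as a known result, citing \cite{J85}, \cite{P89} and, in the Kac--Moody generality needed here, \cite[Lemme 26]{M89} --- and your proposal follows the same route: you verify the easy surjection correctly (the extremal weight vector of $\widehat V(\omega_0)$ satisfies the Cartan relations, is killed by the imaginary root spaces, and satisfies the $x_\a^{k_\a+1}$-relations by rank-one integrability), and you defer the hard injectivity step, the compatibility of the presentation with the Demazure functors along a reduced word, to exactly the lemma of Mathieu that the paper itself cites. So your outline is sound and matches the paper's treatment; since the paper offers no in-house proof of that key step, there is nothing further to compare.
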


\subsection{}

 Note that the Demazure modules for $\Gaff$ are $\Baff$-modules; the description above is that of $D(-\l)$ as a $(\H\oplus \bc d)\oplus \Naff^+$-module. Since $-\l$ is anti-dominant, $D(-\l)$ is admits a structure of  graded $\CG$-module such that $e_{-\l}$ becomes a lowest-weight vector. Let us denote by $e_\l$ the (unique up to scaling) highest weight vector in the $\G$-submodule of $D(-\l)$ spanned by $e_{-\l}$.
 
\begin{cor}
Let $\l\in P^+$. The Demazure module $D(-\l)$ is the graded $\CG$-module generated by a degree zero element $e_{\l}$ with the relations
\begin{equation}\label{k-Demrels-final}
\begin{aligned}
&x_\a^{k_\a+1} \cdot e_{\l}=0,~ \a\in \widehat{R}_{\rm re}^+\cup R^-, ~k_\a=\max\{0,~-(\l+\L_0,\a^\vee)\},\\
&\CH_+ \cdot e_{\l}=0,\\
&h\cdot e_{\l}=\l(h)e_{\l},~h\in\H.\\
\end{aligned}
\end{equation}
\end{cor}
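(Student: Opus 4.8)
The strategy is to read off \eqref{k-Demrels-final} from \eqref{b-Demrels} by conjugating with the longest element $w_0$ of the finite Weyl group $W$; this is clean precisely because $\G$ is of type $A_1$ or $C_n$, so that $w_0=-\id$ on $\lie h^*$ while $w_0$ fixes $\omega_0$ and $\d$. First I would pin down the $\G$-submodule $\bu(\G)e_{-\l}$. Since $\omega_0$ vanishes on all finite coroots, the relations \eqref{b-Demrels} restricted to $\a\in R^+$ read $x_\a^{\l(\a^\vee)+1}e_{-\l}=0$, and by the discussion preceding the corollary $e_{-\l}$ is a lowest weight vector for $\CG$, so in particular $\lie n^-e_{-\l}=0$. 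Because $w_0=-\id$ these are exactly the relations presenting $V(\l)$ through its one-dimensional lowest weight space, whence $\bu(\G)e_{-\l}\cong V(\l)$. Let $e_\l$ be its highest weight vector; it has $\lie h$-weight $\l$ and lies in the degree-zero component of $D(-\l)$, with $e_\l\in\bu(\lie n^+)e_{-\l}$ and $e_{-\l}\in\bu(\lie n^-)e_\l$. Since $\lie n^\pm\subseteq\CG$ and the $d$-action on $D(-\l)$ is the grading, the theorem recalled above gives $D(-\l)=\bu(\widehat{\lie b})e_{-\l}=\bu(\CG)e_{-\l}=\bu(\CG)e_\l$, so $e_\l$ is a cyclic $\CG$-generator.

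Next I would produce the relations. Let $\tilde w_0$ be a lift of $w_0$ acting on the integrable module $\widehat V(\omega_0)$; it conjugates $\widehat{\lie g}_\gamma$ onto $\widehat{\lie g}_{w_0\gamma}$ up to sign, fixes $\CH_+$ (as $w_0\d=\d$), and sends $e_{-\l}$ to a nonzero multiple of $e_\l$ since both span the relevant extremal weight line of $\bu(\G)e_{-\l}$. Applying $\tilde w_0$ to the relations \eqref{b-Demrels}, which hold in $\widehat V(\omega_0)$ and hence in its submodule $D(-\l)$, turns $x_\gamma^{k_\gamma+1}e_{-\l}=0$ into $x_{w_0\gamma}^{k_\gamma+1}e_\l=0$, turns $\CH_+e_{-\l}=0$ into $\CH_+e_\l=0$, and turns the Cartan relation into $he_\l=\l(h)e_\l$ for $h\in\H$ (using $\l\circ w_0=-\l$); each $x_{w_0\gamma}$ lies in $\CG$, so these relations hold in $D(-\l)$. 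The exponent is preserved in the sense that $k_{w_0\gamma}=\max\{0,-(-\l+\omega_0,\gamma^\vee)\}$ by $w_0(\l+\omega_0)=-\l+\omega_0$. As $\gamma$ ranges over $\widehat R^+_{\rm re}$ the roots $w_0\gamma$ range over $(\widehat R^+_{\rm re}\setminus R^+)\cup R^-$, and adjoining the relations $x_\a e_\l=0$ for $\a\in R^+$ — automatic because $e_\l$ is the $\G$-highest weight vector — one obtains precisely \eqref{k-Demrels-final}. This yields a surjection $\psi\colon M\twoheadrightarrow D(-\l)$ of graded $\CG$-modules, where $M$ denotes the $\CG$-module generated by a degree-zero element $\bar m$ subject to \eqref{k-Demrels-final}.

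It remains to show $\psi$ is an isomorphism, i.e.\ $\dim M\le\dim D(-\l)$. The key observation is that $\tilde w_0(\widehat{\lie b})$, modulo the center, is the subalgebra $\widehat{\lie b}^{w_0}$ whose root set is $w_0\widehat R^+=(\widehat R^+\setminus R^+)\cup R^-$ together with $\lie h\oplus\bc d$; crucially $\widehat{\lie b}^{w_0}\subseteq\CG\oplus\bc d$ and $\CG\oplus\bc d=\widehat{\lie b}^{w_0}\oplus\lie n^+$ as vector spaces. In $M$ the relations \eqref{k-Demrels-final} force $\bu(\G)\bar m\cong V(\l)$ with $\bar m$ its highest weight vector, so $\lie n^+\bar m=0$; since by PBW $\bu(\CG\oplus\bc d)=\bu(\widehat{\lie b}^{w_0})\bu(\lie n^+)$, this gives $M=\bu(\widehat{\lie b}^{w_0})\bar m$. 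On the other hand $\bar m$ satisfies the $w_0$-twist of \eqref{b-Demrels}, which is contained in \eqref{k-Demrels-final}, and $D(-\l)$ transported through $\tilde w_0$ is precisely the $\widehat{\lie b}^{w_0}$-module presented on $e_\l$ by those twisted relations — here one uses that $\bu(\widehat{\lie b}^{w_0})e_\l\subseteq\bu(\CG)e_\l=D(-\l)$ has the same dimension as $\tilde w_0 D(-\l)$, hence equals $D(-\l)$. Therefore $\bu(\widehat{\lie b}^{w_0})\bar m$ is a quotient of $D(-\l)$, so $\dim M\le\dim D(-\l)<\infty$, and combined with $\psi$ this forces $M\cong D(-\l)$.

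I expect the point requiring the most care to be this last paragraph: verifying the vector-space identity $\CG\oplus\bc d=\widehat{\lie b}^{w_0}\oplus\lie n^+$ so that the PBW factorization is legitimate, and confirming that $\tilde w_0$ carries $D(-\l)$ back into itself so that its $\widehat{\lie b}$-presentation can be pulled over to a $\widehat{\lie b}^{w_0}$-presentation. The remaining verifications — the exponent identity $k_{w_0\gamma}=k_\gamma$, the stability of $D(-\l)$ under the operators $x_{w_0\gamma}$, and the reconciliation $\widehat R^+_{\rm re}\cup R^-=\widehat R_{\rm re}(+)\cup\widehat R_{\rm re}(-)$ — are routine once the conventions from Section \ref{prelim} are in hand.
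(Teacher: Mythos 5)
Your argument is correct, and it is exactly the route the paper intends (the corollary is stated without proof as an immediate consequence of the quoted presentation theorem): twist the $\widehat{\lie b}$-presentation at $e_{-\l}$ by a lift of the longest element $w_0$, using that $w_0=-\operatorname{id}$ on $\lie h^*$ while fixing $\omega_0$ and $\d$, so that $w_0\widehat R^+_{\rm re}=(\widehat R^+_{\rm re}\setminus R^+)\cup R^-$ and $k_{w_0\gamma}=k_\gamma$, and then recover the reverse inequality from the triangular decomposition $\CG\oplus\bc d=\widehat{\lie b}^{w_0}\oplus\lie n^+$ together with $\lie n^+\bar m=0$. Your write-up supplies correctly the details the paper leaves implicit, so no gaps to report.
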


\subsection{}

For $\a\in \widehat{R}_{\rm re}(+)$, remark that $(\l+\L_0,\a^\vee)\geq 0$, therefore the combining the first relation in \eqref{k-Demrels-final} for this type of roots with the second relation in \eqref{k-Demrels-final} we can write
$$
(\CN^+\oplus\CH_+) \cdot e_{\l}=0.
$$
For $\a\in R^{+}$, remark that $-(\l+\L_0,-\a^\vee)=(\l,\a^\vee)\geq 0$. Therefore, the first relation in \eqref{k-Demrels-final} for the roots $-\a$ with $\a\in R^+$ can be written as
$$
x_{-\a}^{(\l,\a^\vee)+1}\cdot e_{\l}=0, ~ \a\in R^+ .
$$
Therefore, we can state the following.
\begin{cor}\label{annihilators}
Let $\l\in P^+$. With the notation above, $\Ann_{\bu(\CG)}(e_\l)$ is the left ideal of $\bu(\CG)$ generated by $\Ann_{\bu(\CG)}(w_{\l, \bi_{\lambda,0}})$ and the elements
\begin{equation}
x_\a^{k_\a+1},~ \a\in \widehat{R}_{\rm re}(-)\setminus R^-, ~k_\a=\max\{0,~-(\l+\L_0,\a^\vee)\}.
\end{equation}
\end{cor}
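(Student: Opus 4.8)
The plan is to make the cyclic presentation of $D(-\l)$ recorded in the Corollary just above completely explicit and then to reorganize its list of defining relations so that it visibly matches the asserted one. That Corollary identifies $D(-\l)$ with $\bu(\CG)/L$, where $L$ is the left ideal of $\bu(\CG)$ generated by $\CH_+$, by the elements $h-\l(h)$ for $h\in\H$, and by the power relations $x_\a^{k_\a+1}$ for $\a\in\widehat{R}_{\rm re}^+\cup R^-$ with $k_\a=\max\{0,-(\l+\L_0,\a^\vee)\}$; equivalently, $\Ann_{\bu(\CG)}(e_\l)=L$. Writing $J$ for the left ideal generated by $\Ann_{\bu(\CG)}(w_{\l,\bi_{\lambda,0}})$ together with the elements $x_\a^{k_\a+1}$, $\a\in\widehat{R}_{\rm re}(-)\setminus R^-$, I would prove $L=J$ by double inclusion.

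For $J\subseteq L$, I would first observe that the two displayed relations derived just above, combined with $\CH_+\cdot e_\l=0$ and $h\cdot e_\l=\l(h)e_\l$ from \eqref{k-Demrels-final}, say exactly that $e_\l$ satisfies the defining relations \eqref{lWeylrels} of $W(\l,\bi_{\lambda,0})$. Hence $w_{\l,\bi_{\lambda,0}}\mapsto e_\l$ extends to a surjection of $\CG$-modules $W(\l,\bi_{\lambda,0})\twoheadrightarrow\bu(\CG)e_\l=D(-\l)$, which yields $\Ann_{\bu(\CG)}(w_{\l,\bi_{\lambda,0}})\subseteq L$. Moreover, by the identity $\widehat{R}_{\rm re}(+)\cup\widehat{R}_{\rm re}(-)=\widehat{R}_{\rm re}^+\cup R^-$ recorded in \secref{prelim} one has $\widehat{R}_{\rm re}(-)\setminus R^-\subseteq\widehat{R}_{\rm re}^+$, so the remaining generators of $J$ are among the generators of $L$. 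This gives $J\subseteq L$.

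For $L\subseteq J$, the generators $\CH_+$ and $\{h-\l(h):h\in\H\}$ of $L$ already lie in $\Ann_{\bu(\CG)}(w_{\l,\bi_{\lambda,0}})\subseteq J$, so it remains to account for the power relations $x_\a^{k_\a+1}$. Using the same set identity, $\widehat{R}_{\rm re}^+\cup R^-$ is the disjoint union of $\widehat{R}_{\rm re}(+)$, $R^-$, and $\widehat{R}_{\rm re}(-)\setminus R^-$, and I would treat these three cases separately. For $\a\in\widehat{R}_{\rm re}(+)$ one has $(\l+\L_0,\a^\vee)\geq 0$, so $k_\a=0$ and $x_\a^{k_\a+1}=x_\a\in\CN^+$, which belongs to $\Ann_{\bu(\CG)}(w_{\l,\bi_{\lambda,0}})$ by the first relation of \eqref{lWeylrels}. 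For $\a=-\beta$ with $\beta\in R^+$, the weight $\L_0$ annihilates $\beta^\vee$, so $k_\a=\max\{0,(\l,\beta^\vee)\}=(\l,\beta^\vee)$ and $x_\a^{k_\a+1}=x_{-\beta}^{(\l,\beta^\vee)+1}\in\Ann_{\bu(\CG)}(w_{\l,\bi_{\lambda,0}})$. For $\a\in\widehat{R}_{\rm re}(-)\setminus R^-$, the element $x_\a^{k_\a+1}$ is one of the distinguished generators of $J$. Hence $L\subseteq J$, completing the argument.

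Since the whole statement reduces to the presentation of $D(-\l)$ that is already in hand, I do not anticipate a genuine obstacle; the only point needing care is the affine root-system combinatorics — the disjoint splitting $\widehat{R}_{\rm re}^+\cup R^-=\widehat{R}_{\rm re}(+)\sqcup R^-\sqcup(\widehat{R}_{\rm re}(-)\setminus R^-)$ and the evaluation of $k_\a$ on each piece — but these facts are already isolated in the discussion immediately preceding the statement, so the write-up should be brief.
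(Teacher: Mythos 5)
Your proof is correct and follows essentially the same route as the paper: the paper obtains the corollary precisely by observing that the power relations for $\a\in\widehat{R}_{\rm re}(+)$ together with $\CH_+e_\l=0$ amount to $(\CN^+\oplus\CH_+)e_\l=0$, that those for $-\a$ with $\a\in R^+$ become $x_{-\a}^{(\l,\a^\vee)+1}e_\l=0$ (since $\L_0$ vanishes on $\a^\vee$), and that the leftover generators are indexed by $\widehat{R}_{\rm re}(-)\setminus R^-$, which is exactly your regrouping of the presentation \eqref{k-Demrels-final} against \eqref{lWeylrels}. Your double-inclusion write-up just makes explicit what the paper leaves implicit, so no further comment is needed.
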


Recall that the set  $\widehat{R}_{\rm re}(-)\setminus R^-$ consists of the following affine roots
\begin{equation*}
\widehat{R}_{\rm re}(-)\setminus R^-=\frac{1}{2}(R^-_\ell+(2\bz_{+}+1)\d)\cup(R^-_s+\bn\d)\cup(R^-_\ell +2\bn\d).
\end{equation*}

\subsection{}
The following result will proved by reduction to rank one.

\begin{prop}\label{higherrank}
Let $\l\in P^+$. Then,
$$
x_\a^{k_\a+1}\in \Ann_{\bu(\CG)}(w_{\l, \bi_{\lambda,0}}), \quad \text{for all }~ \a\in \widehat{R}_{\rm re}(-)\setminus R^-, ~k_\a=\max\{0,~-(\l+\L_0,\a^\vee)\}.
$$
\end{prop}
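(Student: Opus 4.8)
The plan is to reduce the statement to a computation in rank one current subalgebras of $\CG$, following the philosophy already used throughout Section \ref{seven}. The key structural inputs are Proposition \ref{sp2}(ii)-(iii): for $\alpha\in R^+_s$ the root vectors $x_{\pm\alpha+r\delta}$ ($r\in\bz_+$, $\alpha+r\delta\in\widehat R$) generate a copy of $\lie{sl}_2[t]$ inside $\CG$, while for $\alpha\in R^+_\ell$ the vectors $x_{\frac12(\pm\alpha+(2r+1)\delta)}$ and $x_{\pm\alpha+2r\delta}$ generate a copy of the rank one hyperspecial current algebra of type $A_2^{(2)}$. Since $\widehat{R}_{\rm re}(-)\setminus R^-=\frac12(R^-_\ell+(2\bz_++1)\d)\cup(R^-_s+\bn\d)\cup(R^-_\ell+2\bn\d)$, every root $\alpha$ occurring in the statement lies in one of these rank one subalgebras; the corresponding assertion $x_\alpha^{k_\alpha+1}\cdot w_{\lambda,\bi_{\lambda,0}}=0$ should follow from the analogous statement for the rank one algebra applied to the cyclic vector.

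First I would make precise which rank one module to invoke. Fix $\alpha\in R^+_s$ (resp.\ $\alpha\in R^+_\ell$) and let $\lie s_\alpha\subset\CG$ be the corresponding rank one current subalgebra. Restricting $W(\lambda,\bi_{\lambda,0})$ to $\lie s_\alpha$, the vector $w_{\lambda,\bi_{\lambda,0}}$ satisfies $\CN^+\cap\lie s_\alpha$ killing it, $\CH_+\cap\lie s_\alpha$ killing it, $h_\alpha$ acting by $\lambda(\alpha^\vee)$, and $x_{-\alpha}^{\lambda(\alpha^\vee)+1}$ killing it; these are exactly the defining relations of the rank one local Weyl module $W_{\lie s_\alpha}(\lambda(\alpha^\vee),\bi_0)$ with its graded/augmentation ideal. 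Hence $\bu(\lie s_\alpha)w_{\lambda,\bi_{\lambda,0}}$ is a quotient of that rank one graded local Weyl module. Next I would quote the rank one identification of the graded local Weyl module with the level one Demazure module: for $\lie{sl}_2[t]$ this is \cite[Corollary 1.5.1]{CL06}, and for the hyperspecial $A_2^{(2)}$ current algebra it is the content of Appendix \ref{a22Dem} (Theorem \ref{locWeyl=Dem} in rank one). In that identification the cyclic vector is annihilated precisely by the Demazure-type relations $x_\beta^{k_\beta+1}$ for $\beta\in\widehat R_{\rm re}(-)\setminus R^-$ restricted to the rank one root string, with $k_\beta=\max\{0,-(\lambda+\omega_0,\beta^\vee)\}$. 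Since these numbers are computed using only the $\alpha^\vee$-component of $\lambda$ — i.e.\ they agree with the rank one values — and since $(\lambda+\omega_0,\beta^\vee)$ is unchanged whether we view $\beta$ inside $\CG$ or inside $\lie s_\alpha$, the relation $x_\beta^{k_\beta+1}\cdot w_{\lambda,\bi_{\lambda,0}}=0$ descends from the rank one Demazure module to our module. Taking $\beta=\alpha$ (in the appropriate imaginary-shifted form) gives the claim.

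The main obstacle I anticipate is bookkeeping around the $A_2^{(2)}$ case: one must check carefully that the affine roots $\frac12(-\alpha+(2r+1)\delta)$, $-\alpha+2r\delta$ attached to a long root $\alpha\in R^+_\ell$ correspond, under the isomorphism of Proposition \ref{sp2}(iii), to the roots of the rank one $A_2^{(2)}$ root system in the right way, and that the exponents $k_\beta=\max\{0,-(\lambda+\omega_0,\beta^\vee)\}$ match the exponents appearing in the rank one Demazure presentation (Corollary \ref{annihilators} applied in rank one). This requires tracking coroots and the normalization of $\omega_0$, and is where the genuinely new rank one phenomenon of $A_2^{(2)}$ enters; once the dictionary is set up the argument is formal. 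A secondary point to verify is that restricting the \emph{graded} local Weyl module to $\lie s_\alpha$ indeed yields a quotient of the rank one \emph{graded} local Weyl module and not merely some rank one Weyl module with a non-graded ideal — this is immediate because $\CH_+\cdot w_{\lambda,\bi_{\lambda,0}}=0$ forces the augmentation ideal on every rank one subalgebra.
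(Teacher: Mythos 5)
Your proposal is correct and follows essentially the same route as the paper: reduce to the rank one current subalgebras of Proposition \ref{sp2}(ii)--(iii), observe that $w_{\lambda,\bi_{\lambda,0}}$ satisfies the defining relations of the rank one graded local Weyl module so that $\Ann_{\bu(\CG_\a)}$ of the rank one generator is contained in $\Ann_{\bu(\CG)}(w_{\lambda,\bi_{\lambda,0}})$, and then invoke the rank one result (\cite[Corollary 1.5.1]{CL06} for $A_1^{(1)}$, Proposition \ref{rankoneA22} for $A_2^{(2)}$) together with the coroot/null-root/$\L_0$-normalization bookkeeping that you correctly flag and that the paper records in \eqref{ks}.
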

\begin{proof} 
For the affine Lie algebra of type $A_1^{(1)}$ the corresponding statement is known \cite[Corollary 1.5.1]{CL06}. For the affine Lie algebra of type $A_2^{(2)}$, the statement is Proposition \ref{rankoneA22}.  

We may assume that $\Gaff$ has rank at least two. Let $\a\in R^-$. By Proposition \ref{sp2}(ii), there exists an idecomposable affine Lie subalgebra of $\Gaff$  of type $A_1^{(1)}$ whose non-imaginary root spaces are precisely the root spaces of $\Gaff$ parametrized by 
$$
\widehat{R}_{\rm re, \a}:=(\pm\a+\bz\d)\cap \widehat{R}.
$$
Let us denote the corresponding current algebra by $\CG_\a$, by  $(\cdot,\cdot)_\a$ the standard bilinear form, and by $\L_{\a,0}, \L_\a$ the fundamental weights. An element $\beta\in \widehat{R}_{\rm re,\a}$ is also an element of $\widehat{R}_{\rm re}$. When referring to the co-root corresponding to $\beta$ as an element of $\widehat{R}_{\rm re}$ we will use $\beta^\vee$ as usual, and when referring to the co-root corresponding to $\beta$ as an element of $\widehat{R}_{\rm re,\a}$ we will use $\beta^{\vee_\a}$. Note that with respect to $(\cdot,\cdot)_\a$, the root $\a$ has always square length 2.  If $\a\in R^-_s$ the null-root $\d_\a$ equals $\d$ and the $0$-fundamental weight $\L_{\a,0}$ equals $\L_0$.  If $\a\in R^-_\ell$ the null-root $\d_\a$ equals $2\d$ and the $0$-fundamental weight $\L_{\a,0}$ equals $\frac{1}{2}\L_0$. 

Denote $\l_\a:=-(\l,\a^\vee)\L_\a$.  We have
\begin{equation}\label{ks}
(\l_\a+\L_{\a,0}, (\a+s\d_\a)^{\vee_\a} )_\a=(\l,\a^\vee)+s=(\l+\L_0, (\a+s\d)^\vee).
\end{equation}

If $w_{\l_\a, \bi_{\lambda_\a,0}}$ denotes the highest weight cyclic vector of the local Weyl module $W(\lambda_\a,\bi_{\lambda_\a,0})$ for $\CG_\a$ with highest weight $\l_\a$, then the rank one statement  for $\CG_\a$ and $\l_\a$, together with \eqref{ks} imply  that 
$$
x_\beta^{k_\beta+1}\in \Ann_{\bu(\CG_\a)}(w_{\l_\a, \bi_{\lambda_\a,0}}), \quad \text{for all }~ \beta\in (\a+\bn\d)\cap \widehat{R}.
$$
Noting that according to \eqref{lWeylrels} we have
$$
\Ann_{\bu(\CG_\a)}(w_{\l_\a, \bi_{\lambda_\a,0}})\subseteq \Ann_{\bu(\CG)}(w_{\l, \bi_{\lambda,0}}), 
$$
we obtain that $x_\beta^{k_\beta+1}\in \Ann_{\bu(\CG)}(w_{\l}, \bi_{\lambda,0})$ for all $\beta\in (\a+\bn\d)\cap \widehat{R}$. Hence, our statement is proved for all elements of the following set
\begin{equation*}
(R^-_s+\bn\d)\cup(R^-_\ell +2\bn\d)\quad \text{if }  n\geq 2.
\end{equation*}
We are left with showing that our statement holds for the roots in
$$
\frac{1}{2}(R^-_\ell +(2\bz_{+}+1)\d)\quad \text{if }  n\geq 2.
$$
Let $\a\in R^-_\ell$. By Proposition \ref{sp2}(iii), there exists an idecomposable affine Lie subalgebra of $\Gaff$  of type $A_2^{(2)}$ whose non-imaginary root spaces are precisely the root spaces of $\Gaff$ parametrized by 
$$
\widehat{R}_{\rm re,\a}:=\frac{1}{2}(\pm\a+(2\bz+1)\d)\cup (\pm\a+2\bz\d).
$$
As before, let us denote the corresponding algebra by $\CG_\a$, by  $(\cdot,\cdot)_\a$ the standard bilinear form, and by $\L_{\a,0}, \L_\a$ the fundamental weights. 
Note that in this case, two standard bilinear forms coincide on the common domain, as do the co-roots, null-roots, and $0$-fundamental weights. Denote $\l_\a:=-(\l,\a^\vee)\L_\a$.  Then,
$$
(\l_\a+\L_{0}, \beta^{\vee} )=(\l+\L_0, \beta^\vee), \quad \text{for all }~\beta\in \frac{1}{2}(\a+(2\bz+1)\d)\cup (\a+2\bz\d).
$$
If $w_{\l_\a, \bi_{\lambda_\a,0}}$ denotes the highest weight cyclic vector of the local Weyl module $W(\lambda_\a,\bi_{\lambda_\a,0})$ for $\CG_\a$ with highest weight $\l_\a$, then Proposition \ref{rankoneA22} for $\CG_\a$ and $\l_\a$,  imply  that 
$$
x_\beta^{k_\beta+1}\in \Ann_{\bu(\CG_\a)}(w_{\l_\a, \bi_{\lambda_\a,0}}), \quad \text{for all }~ \beta\in \frac{1}{2}(\a+(2\bz_++1)\d)\cup (\a+2\bn\d).
$$
As before, from \eqref{lWeylrels} we obtain
$$
\Ann_{\bu(\CG_\a)}(w_{\l_\a,, \bi_{\lambda_\a,0}})\subseteq \Ann_{\bu(\CG)}(w_{\l, \bi_{\lambda,0}}), 
$$
and therefore, $x_\beta^{k_\beta+1}\in \Ann_{\bu(\CG)}(w_{\l, \bi_{\lambda,0}})$ for all $\beta\in \frac{1}{2}(\a+(2\bz_++1)\d)\cup (\a+2\bn\d)$. To conclude, our statement is proved for all the elements of the set
$$
\frac{1}{2}(R^-_\ell +(2\bz_{+}+1)\d),
$$
and hence it holds for all  roots in $\widehat{R}_{\rm re}(-)\setminus R^-$.
\end{proof}

\subsection{} We shall now present the proof of Theorem \ref{locWeyl=Dem}.  Proposition \ref{higherrank} and Corrollary \ref{annihilators} imply that both $W(\lambda,\bi_{\lambda,0})$ and $D(-\l)$ are graded cyclic $\CG$-modules and the cyclic vectors have the same annihilator ideal. Therefore, $W(\lambda,\bi_{\lambda,0})$ and  $D(-\l)$ are isomorphic as graded $\CG$-modules and, by restriction, they are also isomorphic as graded $\G$-modules.

\subsection{} Let us recall \cite[Theorem 2, pg. 195]{FoL06}.
\begin{prop}\label{Demtensor}
Let $\lambda \in P^+$. Then $D(-\l)$ and $\otimes_{i=1}^nD(-\L_i)^{\otimes \lambda(\alpha_i^\vee)}$ are isomorphic as graded $\G$-modules. In particular, $$\dim D(-\l)=\prod_{i=1}^n\dim D(-\L_i)^{\lambda(\alpha_i^\vee)}.$$
Furthermore, as $\G$-modules,
$$
D(-\L_i)\cong V(0)\oplus V(\L_1)\oplus\cdots\oplus V(\L_i).
$$
\end{prop}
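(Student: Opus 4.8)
This statement is recalled from \cite[Theorem~2, pg.~195]{FoL06}; the argument, which I would reproduce as follows, has two parts --- the $\G$--module structure of the fundamental modules $D(-\L_i)$, and the tensor product decomposition $D(-\l)\cong\bigotimes_{i=1}^n D(-\L_i)^{\otimes\l(\alpha_i^\vee)}$ --- with the second reducing to the first.

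\emph{The fundamental modules.} The plan is to compute $\ch D(-\L_i)$ directly from the Demazure character formula for $\Gaff$ of type $A_{2n}^{(2)}$. By \corref{annihilators}, $D(-\L_i)$ is the Demazure module attached to the extremal weight of $\widehat{V}(\L_0)$ whose finite part is $-\L_i$; applying the product of Demazure operators attached to a reduced decomposition of the corresponding element of $\widehat{W}$ to $e^{\L_0}$ and restricting the result to $\H^*$, the partial sums telescope and yield $\ch D(-\L_i)=\sum_{j=0}^i\ch V(\L_j)$. In particular $\dim D(-\L_i)=\sum_{j=0}^i\dim V(\L_j)=\binom{2n+1}{i}$, using $\sum_{j=0}^i\big(\binom{2n}{j}-\binom{2n}{j-2}\big)=\binom{2n}{i}+\binom{2n}{i-1}=\binom{2n+1}{i}$ for $\mathfrak{sp}_{2n}$ (and $1+2=3$ when $\G$ is of type $A_1$). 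As a cross-check one can, via \thmref{locWeyl=Dem}, identify $D(-\L_i)\cong W(\L_i,\bi_{\L_i,0})$, and \propref{piw} exhibits this as surjecting onto $\Psi^*_{z,1}V_{2n+1}(\L_i)=\bigwedge^i\bc^{2n+1}$ restricted along the embedding $\G\hookrightarrow\lie{sl}_{2n+1}$ of \secref{realization}: since $\bc^{2n+1}|_{\G}\cong V(\L_1)\oplus V(0)$ --- the only decomposition of a $(2n+1)$--dimensional $\mathfrak{sp}_{2n}$--module into irreducibles with nontrivial action --- and $\bigwedge^k$ of the standard $\mathfrak{sp}_{2n}$--module is $\bigoplus_{j\ge 0}V(\L_{k-2j})$, this restriction equals $\bigoplus_{j=0}^i V(\L_j)$, and the surjection is then forced to be an isomorphism by the dimension just computed.

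\emph{The tensor product decomposition.} By an induction it suffices to produce a surjection $D(-\mu)\otimes D(-\nu)\twoheadrightarrow D(-\mu-\nu)$ of $\CG$--modules, $e_\mu\otimes e_\nu\mapsto e_{\mu+\nu}$, for $\mu,\nu\in P^+$, and to show it is an isomorphism. For the surjection I would check that $e_\mu\otimes e_\nu$ satisfies the defining relations of $D(-\mu-\nu)$ from \corref{annihilators}: the relations $(\CN^+\oplus\CH_+)\cdot(e_\mu\otimes e_\nu)=0$ and the weight relation hold because these elements act by derivations killing each tensor factor; for $\alpha\in R^+$ the relation $x_{-\alpha}^{(\mu+\nu)(\alpha^\vee)+1}(e_\mu\otimes e_\nu)=0$ holds because in each term of the binomial expansion one factor is acted on by a power of $x_{-\alpha}$ past its own bound; and for $\alpha\in\widehat{R}_{\rm re}(-)\setminus R^-$ the same binomial argument applies once one verifies $k_\alpha^{\mu+\nu}\ge k_\alpha^{\mu}+k_\alpha^{\nu}$, where $k_\alpha^{\lambda}=\max\{0,-(\lambda+\L_0,\alpha^\vee)\}$. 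Writing $-\alpha$ with finite part $\beta\in R^+$, and noting that $(\lambda,\delta)=0$ for $\lambda\in\H^*$ while $c_\alpha:=(\L_0,\alpha^\vee)>0$, one has $k_\alpha^{\lambda}=\max\{0,c'_\alpha\,\lambda(\beta^\vee)-c_\alpha\}$ for a suitable positive constant $c'_\alpha$, and the inequality follows from $\lambda(\beta^\vee)\ge 0$ for $\lambda\in P^+$ by a short case analysis. The surjection is then an isomorphism by a dimension count: its source has dimension $\prod_{i=1}^n\binom{2n+1}{i}^{\l(\alpha_i^\vee)}$ by the first part, while $\dim D(-\l)=\dim W(\l,\bi_{\l,0})\ge\prod_{i=1}^n\binom{2n+1}{i}^{\l(\alpha_i^\vee)}$ by \thmref{locWeyl=Dem} and \propref{dimineq}, forcing equality.

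\emph{Main obstacle.} The delicate point throughout is the bookkeeping special to the twisted type $A_{2n}^{(2)}$: the half--integral multiples of $\delta$ occurring in the roots $\frac12(R_\ell+(2\bz_{+}+1)\delta)$, the long/short dichotomy built into the exponents $k_\alpha$, and the correct normalization of the translations in $\widehat{W}$ and of the pairings $(\L_0,\alpha^\vee)$ --- all of which enter both in checking $k_\alpha^{\mu+\nu}\ge k_\alpha^{\mu}+k_\alpha^{\nu}$ and in carrying out the Demazure operator computation for the fundamental modules. Once these are handled, the rest is formal.
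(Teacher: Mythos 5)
The paper itself does not prove this proposition: it is quoted from \cite[Theorem 2, pg.~195]{FoL06}, and the only thing added in the text is the Weyl dimension formula computation for $\dim V(\L_i)$. You are therefore attempting strictly more than the paper does, which is legitimate, but your argument has a genuine gap in the tensor-product step. You claim a surjection of $\CG$-modules $D(-\mu)\otimes D(-\nu)\twoheadrightarrow D(-\mu-\nu)$ and propose to obtain it by checking that $e_\mu\otimes e_\nu$ satisfies the defining relations of $D(-\mu-\nu)$ from the Corollary on annihilators. That verification (including the inequality $k^{\mu+\nu}_\a\ge k^{\mu}_\a+k^{\nu}_\a$, which is correct) produces a homomorphism in the \emph{opposite} direction: since $D(-\mu-\nu)$ is the cyclic module presented by those relations, what you get is a map $D(-\mu-\nu)\to D(-\mu)\otimes D(-\nu)$, $e_{\mu+\nu}\mapsto e_\mu\otimes e_\nu$, whose image is only the $\CG$-submodule generated by $e_\mu\otimes e_\nu$. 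The tensor product of two graded cyclic modules ``based at $0$'' need not be cyclic, and there is no argument that this map is injective. Consequently the dimension count does not close: \thmref{locWeyl=Dem} and \propref{dimineq} give only the lower bound $\dim D(-\l)\ge\prod_i\binom{2n+1}{i}^{\l(\a_i^\vee)}$, and what you need is the upper bound, which would require either the surjection you have not constructed or injectivity of the map you did construct. Indeed, a $\CG$-module surjection $D(-\mu)\otimes D(-\nu)\twoheadrightarrow D(-\mu-\nu)$ should not be expected at all: the proposition asserts an isomorphism only as graded $\G$-modules, and as $\CG$-modules the two sides differ in general (in the analogous untwisted level-one situation for $\lie{sl}_2[t]$, $\CG_+$ kills $D(-\L_1)^{\otimes 2}$ but not $D(-2\L_1)$). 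This is precisely why \cite{FoL06} and the related literature pass through fusion-type constructions --- tensor products of local Weyl modules at \emph{distinct} spectral parameters followed by taking the associated graded --- which is the same mechanism the paper itself exploits in \propref{weylsl} and \propref{dimineq}, but only to get the lower bound.

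A secondary weakness: your primary argument in the fundamental case, that in the Demazure character formula ``the partial sums telescope'' to give $\ch D(-\L_i)=\sum_{j=0}^i\ch V(\L_j)$, is asserted rather than carried out; this computation (equivalently the upper bound $\dim D(-\L_i)\le\binom{2n+1}{i}$) is exactly the nontrivial input the paper outsources to \cite{FoL06}. Your cross-check via $\bigwedge^i\bc^{2n+1}$ restricted to $\G\subset\lie{sl}_{2n+1}$ is a nice and correct computation of the $\G$-module decomposition, but as you note it only forces the isomorphism once the dimension of $D(-\L_i)$ is already known, so it cannot replace that input.
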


The Weyl dimension formula can be used to obtain
$$
\dim V(\L_i)=\frac{(2n-2i+2)\cdot (2n+1)!}{i!\cdot (2n-i+2)!}=\binom{2n+1}{i}-\binom{2n+1}{i-1}.
$$

\subsection{} We present the proof of Theorem \ref{mainthm}. Let $\lambda\in P^+$  and  let $\bi$ be a maximal ideal in $\ba_\lambda$.  From Theorem \ref{locWeyl=Dem}, Proposition \ref{Demtensor}, and the above dimension computation we obtain that
$$
\dim W(\lambda,\bi_{\lambda,0})=\prod_{i=1}^n\binom{2n+1}{i}^{\lambda(\alpha_i^\vee)}.
$$
This, together with Proposition \ref{dimineq} implies the conclusion of Theorem \ref{mainthm}.



\section{Local Weyl modules and Demazure modules in rank one} \label{a22Dem}


\subsection{}  We shall present the proof of Proposition \ref{higherrank} and Theorem \ref{locWeyl=Dem} for $\Gaff$ of type $A_2^{(2)}$. This proof emulates the proof of the same statement for $\Gaff$ of type $A_1^{(1)}$ given in \cite{CV13}.

Consider the current algebra $\CG$ of type $A_{2}^{(2)}$. In this case, $R^{+}=\{\a\}$, the unique fundamental weight is $\L=\a/2$, and 
$$\widehat{R}_{re}(\pm)=(\pm\a+2\bz_{+}\d)\cup (\pm\frac{\a}{2}+(\frac{1}{2}+\bz_{+})\d).$$
We can fix a basis $\{x_\beta, ~h_{s\d}~:~\beta\in \widehat{R}_{re}(\pm), s\in\bz_+\}$ for $\CG$ as in Proposition \ref{basis}. We recall here the relation the relations in $\bu(\CG)$ that will be used later on. We first introduce some notation.

For any $\ell,m\in\frac{1}{2}\bz_{+}$ such that $\ell+m\in \bz$, let $S(\ell,m)$ be the set of non-negative integer sequences $(p_i)_{i\in \frac{1}{2}\bz_+}$ that satisfy
\begin{equation}\label{a22sequences}
\begin{aligned}
&\sum_{N\geq 0} \frac{1}{2}p_{N+\frac{1}{2}} +\sum_{N\geq 0} p_N=\ell,\\
&\sum_{N\geq 0} \frac{2N+1}{2}p_{N+\frac{1}{2}} +\sum_{N\geq 0} 2Np_N=m.
\end{aligned}
\end{equation}
We define the support of $p\in S(r,s)$ as 
$$
\sup(p)=\{i\in \frac{1}{2}\bz_+~|~p_i\neq 0\}.
$$
Also, let 
\begin{equation}\label{ydef}
y(\ell,m)=\sum_{p\in S(\ell,m)} \overset{\rightarrow}{\prod}_{N\geq 0}
\left(
\frac{(-1)^{p_{N+\frac{1}{2}}}}{2^{Np_{N+\frac{1}{2}}}}
{x_{-\frac{\a}{2}+(N+\frac{1}{2})\d}^{(p_{N+\frac{1}{2}})}}
\right)\left(
\frac{(-1)^{p_{N}}(2-(-1)^N)^{p_N}}{2^{2Np_{N}}}
{x_{-\a+2N\d}^{(p_N)}}
\right).
\end{equation}
Above, $\overset{\rightarrow}{\prod}_{N\geq 0}$ refers to the product of the specified factors written exactly in the increasing order of the indexing parameter (the factors do not commute and the order in which they appear in the product is important).

\begin{lem}\label{fullrelations}
Let $\ell,m\in\frac{1}{2}\bz_{+}$ such that $\ell+m\in \bz$. Then,
$$
(-1)^{\ell+m}{x_{\frac{\a}{2}+\frac{1}{2}\d}^{(2m)}}{x_{-\a}^{(\ell+m)}}-y(\ell,m)\in \bu(\CG)\CN^+ .
$$
\end{lem}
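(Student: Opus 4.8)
The identity to prove is a rank-one computation inside $\bu(\CG)$ for $\CG$ of type $A_2^{(2)}$, asserting that $(-1)^{\ell+m}x_{\frac\a2+\frac12\d}^{(2m)}x_{-\a}^{(\ell+m)}$ agrees, modulo the left ideal $\bu(\CG)\CN^+$, with the explicit sum $y(\ell,m)$ of ordered monomials in the negative real root vectors weighted by the coefficients appearing in \eqref{ydef}. The natural strategy is induction on the pair $(\ell,m)$, using a recursion obtained by peeling off one factor of $x_{-\a}$ from the right and commuting it past $x_{\frac\a2+\frac12\d}^{(2m)}$. Concretely I would first record the commutator relations in $\CG$ coming from the explicit realization of Section \ref{realization}: the brackets of $x_{\frac\a2+\frac12\d}$ with $x_{-\a+2N\d}$ and with $x_{-\frac\a2+(N+\frac12)\d}$, and the bracket of $x_{\frac\a2+\frac12\d}$ with the Cartan elements $h_{s\d}$. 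These are finite, low-complexity computations in $\lie{sl}_3\otimes\bc[t,t^{-1}]$ via Proposition \ref{sp2}(iii), and they are exactly the $A_2^{(2)}$ analogues of the $\lie{sl}_2$ relations used in \cite{CP01,CV13}.

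**Key steps.** (1) Establish the base cases: for $m=0$ this should reduce, after projecting away $\bu(\CG)\CN^+$, to the statement that $x_{-\a}^{(\ell)}$ equals the corresponding term of $y(\ell,0)$, which holds by definition of $S(\ell,0)$; for small $\ell+m$ one checks directly. (2) For the inductive step write $x_{-\a}^{(\ell+m)}=x_{-\a}^{(\ell+m-1)}x_{-\a}$ (up to a binomial factor from divided powers) and move the rightmost $x_{\frac\a2+\frac12\d}$ through; more efficiently, use the standard trick of computing $[x_{\frac\a2+\frac12\d}^{(2m)},x_{-\a}]$ and $[x_{\frac\a2+\frac12\d}, x_{-\a}^{(\ell+m)}]$ as in Garland's lemma so that modulo $\bu(\CG)\CN^+$ the product becomes a sum of terms of strictly smaller $m$ (and shifted $\ell$), to which the inductive hypothesis applies, together with ``diagonal'' terms picking up the Cartan elements $h_{s\d}$ that then get rewritten using the $P_{\frac12\a,s}$-type recursions of Lemma \ref{garl} — but here, since we work in the Weyl module's analog, these combine into the explicit coefficients of \eqref{ydef}. (3) Match coefficients: show that the combinatorial identity relating the sum over $S(\ell,m)$ to the sums over $S(\ell',m')$ produced by the recursion is precisely the one encoded in the weights $(-1)^{p_N}(2-(-1)^N)^{p_N}2^{-2Np_N}$ and $(-1)^{p_{N+1/2}}2^{-Np_{N+1/2}}$, the constraint $\ell+m\in\bz$ being what keeps the half-integer contributions consistent. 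This last bookkeeping — getting the powers of $2$ and the signs $(2-(-1)^N)$ to line up with the contribution of $h_{2N\d}$ versus $h_{(2N+1)\d}$ — is the delicate part.

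**Main obstacle.** The genuinely new difficulty compared to the $A_1^{(1)}$ case of \cite{CV13} is precisely the appearance of the two types of negative real roots, $-\a+2N\d$ (with even imaginary part) and $-\frac\a2+(N+\frac12)\d$ (with half-integral support), and the parity-dependent coefficient $2-(-1)^N$. One has to check that the commutators $[x_{\frac\a2+\frac12\d},\,x_{-\a+2N\d}]$ and $[x_{\frac\a2+\frac12\d},\,x_{-\frac\a2+(N+\frac12)\d}]$ produce, up to scalars that are powers of $2$, the correct next root vector and the correct $h_{s\d}$, and that these scalars are exactly what is needed for the ansatz $y(\ell,m)$ to be stable under the recursion. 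I expect that the cleanest route is to reduce the whole statement to the already-known relation \cite[Corollary 5.39]{F-V98} or \cite[Lemma 5.36]{F-V98} for $A_2^{(2)}$ — which underlies Lemma \ref{garl} and Lemma \ref{relations}(ii) — by an induction that at each stage invokes those results rather than reproving them, so that the only real work is the coefficient matching in step (3). If that reduction is not clean enough, one falls back on the direct double induction on $(\ell,m)$ outlined above.
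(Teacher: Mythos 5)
The paper does not prove this lemma at all: immediately after the statement it says ``We refer to \cite[Corollary 5.39]{F-V98} for a proof,'' i.e.\ Lemma~\ref{fullrelations} is exactly that corollary for the $A_2^{(2)}$ algebra, transported into the basis of Proposition~\ref{basis} (the same source already underlies Lemma~\ref{garl} and the second relation in Lemma~\ref{relations}(ii)). So the ``cleanest route'' you name at the end --- invoke \cite[Corollary 5.39]{F-V98} --- coincides with the paper's proof, and if you take that route the induction scaffolding is unnecessary; the only thing to check is the dictionary between the paper's root vectors $x_{-\a+2N\d}$, $x_{-\frac{\a}{2}+(N+\frac12)\d}$ and the generators used in \cite{F-V98}, which you do not write out but which is routine.

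Taken as a standalone argument, however, your primary route has a genuine gap exactly where the substance of the lemma lies. The statement is not the qualitative assertion that the product can be rewritten modulo $\bu(\CG)\CN^+$ as an ordered sum of monomials in negative root vectors; it is the precise formula \eqref{ydef}, with the parity-dependent factor $(2-(-1)^N)^{p_N}$ and the powers of $2$, and you explicitly defer this (``the delicate part,'' ``bookkeeping'') rather than carry out the commutator computations and the matching of the recursion with the sets $S(\ell,m)$. There is also a conceptual slip in step (2): you propose that the Cartan terms $h_{s\d}$ produced in the induction get ``rewritten using the $P_{\frac12\a,s}$-type recursions'' and disappear ``since we work in the Weyl module's analog.'' But the lemma is an identity in $\bu(\CG)$ modulo the left ideal $\bu(\CG)\CN^+$ only --- not modulo $\CH_+$, and not an identity of operators on a highest weight vector --- so elements of $\bu(\CH_+)$ cannot be evaluated away; note that $y(\ell,m)$ contains no Cartan factors at all, in contrast with Lemma~\ref{relations}. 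Showing that the Cartan contributions genuinely cancel or recombine into the pure monomials of $y(\ell,m)$ with the stated coefficients is precisely the content of \cite[Corollary 5.39]{F-V98}, and it is the part your sketch leaves undone.
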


We refer to \cite[Corollary 5.39]{F-V98}  for a proof.

\subsection{} For $k,\ell,m\in\frac{1}{2}\bz_+$ such that $\ell+m\in \bz$, let 
$$
S_{<k}(\ell,m)=\{p\in S(\ell,m)~|~\sup(p)\subseteq [0,k) \}$$ and $$S_{\geq k}(\ell,m)=\{p\in S(\ell,m)~|~\sup(p)\subseteq [k,\infty) \}.
$$
Consider $y_{<k}(\ell,m)$ and $y_{\geq k}(\ell,m)$ defined as in \eqref{ydef} with the index set for the summation is $S_{<k}(\ell,m)$ and respectively $S_{\geq k}(\ell,m)$.

With this notation, we have

\begin{equation}\label{eq4}
y(\ell, m)=y_{\geq k+\frac{1}{2}}(\ell,m)+\sum_{(i,r)\in T(\ell,m,k)}  y_{<k+\frac{1}{2}}(\ell-i,m-r) y_{\geq k+\frac{1}{2}}(i,r) ,
\end{equation}
where 
\begin{equation}
T(\ell,m,k)=\{(i,r)~|~i,r\in\frac{1}{2}\bz_+,~ i+r\in\bz,~i<\ell,~0\leq m-r\leq 2k(\ell-i), ~ r\geq (2k+1)i\}.
\end{equation}
The constraint that $m-r\leq 2k(\ell-i)$ arises from the fact that if  $m-r> 2k(\ell-i)$ then $S_{< k+\frac{1}{2}}(i,r)=\emptyset$ (in which case $y_{< k+\frac{1}{2}}(i,r)=0$). The constraint that $r\geq (2k+1)i$ arises from the fact that if  $r< (2k+1)i$ then $S_{\geq k+\frac{1}{2}}(i,r)=\emptyset$ (in which case $y_{\geq k+\frac{1}{2}}(i,r)=0$). Both verifications are straightforward from \eqref{a22sequences}.
\subsection{} Let us consider the local Weyl module $W(n\L,\bi_{n\L,0})$, $n\geq 0$, for $\CG$. It is generated by the highest weight vector $w_n$ with the relations
\begin{equation}
\begin{aligned}
& (\CN^+\oplus \CH_+)\cdot w_n=0,\\
& h_0\cdot w_n=nw_n,\\
& x_{-\a}^{n+1}\cdot w_n=0.
\end{aligned}
\end{equation}
Our goal is to show that the following relations are also satisfied
\begin{equation}\label{sl2rels}
\begin{aligned}
&x_{-\a+2k\d}^{n-k+1}\cdot w_n=0, \text{ for } 0\leq k\leq n-1,\\
&x_{-\a+2k\d}\cdot w_n=0, \text{ for } k\geq  n,
\end{aligned}
\end{equation}
and
\begin{equation}\label{a22rels}
\begin{aligned}
&x_{-\frac{\a}{2}+(k+\frac{1}{2})\d}^{2n-2k}\cdot w_n=0, \text{ for } 0\leq k\leq n-1,\\
&x_{-\frac{\a}{2}+(k+\frac{1}{2})\d}\cdot w_n=0, \text{ for } k\geq  n,
\end{aligned}
\end{equation}
The relations \eqref{sl2rels} follow from the fact that $\oplus_{n\in\bz}\Gaff_{\pm\a+2n\d}\oplus\H$ is the current algebra of type $A_1^{(1)}$. We will focus on the relations \eqref{a22rels}. Our starting point is the following.
\begin{lem}\label{Lm1}
For $\ell,m\in\frac{1}{2}\bz_+$ such that $\ell+m\in \bz$ we have
\begin{equation}
y(\ell,m)\cdot w_n
=
(-1)^{\ell+m}{x_{\frac{\a}{2}+\frac{1}{2}\d}^{(2m)}}{x_{-\a}^{(\ell+m)}}\cdot w_n
\end{equation}
In particular, if $\ell+m\geq n+1$, we have
\begin{equation}\label{eq1}
y(\ell,m)\cdot w_n=0.
\end{equation}
\end{lem}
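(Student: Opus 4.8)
The plan is to derive both assertions directly from Lemma \ref{fullrelations} and the presentation of the local Weyl module $W(n\L,\bi_{n\L,0})$, so the proof will be short.

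First I would apply the identity of Lemma \ref{fullrelations} to the cyclic generator $w_n$. By that lemma,
$$
(-1)^{\ell+m}{x_{\frac{\a}{2}+\frac{1}{2}\d}^{(2m)}}{x_{-\a}^{(\ell+m)}}-y(\ell,m)\in\bu(\CG)\CN^+ .
$$
Every element of $\bu(\CG)\CN^+$ is a finite sum of terms $u\,x$ with $u\in\bu(\CG)$ and $x\in\CN^+$, and since $\CN^+\cdot w_n=0$ we have $u\,x\cdot w_n=u(x\cdot w_n)=0$; hence $\bu(\CG)\CN^+$ annihilates $w_n$. Evaluating the displayed element on $w_n$ therefore yields
$$
y(\ell,m)\cdot w_n=(-1)^{\ell+m}{x_{\frac{\a}{2}+\frac{1}{2}\d}^{(2m)}}{x_{-\a}^{(\ell+m)}}\cdot w_n,
$$
which is the first assertion. (Here $\ell+m\in\bz_+$ and $2m\in\bz_+$ since $\ell,m\in\frac12\bz_+$ with $\ell+m\in\bz$, so the divided powers make sense.)

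For the vanishing statement, suppose $\ell+m\geq n+1$. Then $\ell+m-(n+1)\geq 0$, so
$$
x_{-\a}^{(\ell+m)}\cdot w_n=\frac{1}{(\ell+m)!}\,x_{-\a}^{\,\ell+m-(n+1)}\bigl(x_{-\a}^{\,n+1}\cdot w_n\bigr)=0,
$$
using the defining relation $x_{-\a}^{n+1}\cdot w_n=0$. Combined with the first assertion this gives $y(\ell,m)\cdot w_n=0$, as claimed.

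I do not expect any real obstacle here: the lemma is a formal consequence of Lemma \ref{fullrelations} together with the relations $\CN^+\cdot w_n=0$ and $x_{-\a}^{n+1}\cdot w_n=0$. The only points that deserve a line of care are that $\bu(\CG)\CN^+$ kills $w_n$ (because $\CN^+$ does), and that the divided-power symbols occurring are genuine elements of $\bu(\CG)$, so that the identity of Lemma \ref{fullrelations} transfers verbatim.
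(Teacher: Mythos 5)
Your proof is correct and is exactly the argument the paper intends — its proof of this lemma is just the remark that it is straightforward from Lemma \ref{fullrelations}, applied to $w_n$ using $\CN^+\cdot w_n=0$ and $x_{-\a}^{n+1}\cdot w_n=0$. Nothing to add.
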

\begin{pf}
Straightforward from Lemma \ref{fullrelations}.
\end{pf}
\subsection{} We can now show that the second relation in \eqref{a22rels} holds.
\begin{lem}\label{Lm2}
For $k\in\bz_+$ such that $k\geq n$ we have
\begin{equation}
x_{-\frac{\a}{2}+(k+\frac{1}{2})\d}\cdot w_n=0.
\end{equation}
\end{lem}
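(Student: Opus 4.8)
The plan is to obtain this as the first, and simplest, instance of the reduction pattern: choose the pair $(\ell,m)$ for which the operator $y(\ell,m)$ of \eqref{ydef} collapses to a single monomial that is a scalar multiple of $x_{-\frac{\a}{2}+(k+\frac12)\d}$, and then invoke Lemma \ref{Lm1}. The natural choice is $\ell=\frac12$ and $m=k+\frac12$. Then $\ell+m=k+1\in\bz$, so Lemma \ref{Lm1} applies, and since $k\geq n$ we have $\ell+m\geq n+1$; hence $y(\tfrac12,k+\tfrac12)\cdot w_n=0$.

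It then remains to compute $y(\tfrac12,k+\tfrac12)$ explicitly, i.e. to understand $S(\tfrac12,k+\tfrac12)$. First I would note that in the first equation of \eqref{a22sequences} the right-hand side is $\frac12$; since all $p_i$ are non-negative integers, the integer-indexed sum $\sum_{N\geq 0}p_N$ must vanish and exactly one half-integer-indexed coordinate equals $1$, say $p_{M+\frac12}=1$. The second equation of \eqref{a22sequences} then forces $\frac{2M+1}{2}=k+\frac12$, that is $M=k$. So $S(\tfrac12,k+\tfrac12)$ is the singleton consisting of the sequence supported at $k+\frac12$, and substituting it into \eqref{ydef} leaves only the block indexed by $N=k$, which contributes the factor $-2^{-k}x_{-\frac{\a}{2}+(k+\frac12)\d}$ (all other blocks being trivial). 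Thus $y(\tfrac12,k+\tfrac12)=-2^{-k}x_{-\frac{\a}{2}+(k+\frac12)\d}$, and combining with the vanishing above yields $x_{-\frac{\a}{2}+(k+\frac12)\d}\cdot w_n=0$.

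I do not expect a genuine obstacle here. The only points requiring care are the elementary integrality argument identifying $S(\tfrac12,k+\tfrac12)$ as a singleton and the bookkeeping of the sign and the power of $2$ read off from \eqref{ydef}; both are routine. The substantive input is already packaged in Lemma \ref{fullrelations} (equivalently Lemma \ref{Lm1}); the present lemma merely records the base case, while the decomposition \eqref{eq4} is what will be needed to push this type of argument to the remaining relations $x_{-\frac{\a}{2}+(k+\frac12)\d}^{2n-2k}\cdot w_n=0$ for $0\leq k\leq n-1$ in \eqref{a22rels}.
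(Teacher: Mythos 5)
Your proposal is correct and coincides with the paper's proof: both apply Lemma \ref{Lm1} with $\ell=\tfrac12$, $m=k+\tfrac12$ (using $k\geq n$ to get $\ell+m\geq n+1$), identify $S(\tfrac12,k+\tfrac12)$ as the singleton $p_i=\delta_{i,k+\frac12}$, and read off $y(\tfrac12,k+\tfrac12)=-2^{-k}x_{-\frac{\a}{2}+(k+\frac12)\d}$ to conclude. The sign and power of $2$ you compute agree with \eqref{ydef}, so nothing is missing.
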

\begin{proof}
By \eqref{eq1} we have
$$
y(\frac{1}{2},k+\frac{1}{2})\cdot w_n=0.
$$
Now, the unique element in $S(\frac{1}{2},k+\frac{1}{2})$ is the sequence $p$ for which $p_i=\d_{i,k+\frac{1}{2}}$, $i\in\frac{1}{2}\bz_+$ and hence
$$y(\frac{1}{2},k+\frac{1}{2})=-\frac{1}{2^{k}}
x_{-\frac{\a}{2}+(k+\frac{1}{2})\d},$$
from which the desired conclusion follows.
\end{proof}
\subsection{} The overall argument rest on the following two results.

\begin{lem}\label{Lm3}
Let $k\in\bz_+$  and $m\in \frac{1}{2}+\bz_+$ such that $m+\frac{1}{2}>n$ and $m\geq k+\frac{1}{2}$. Then,
$$
y(\frac{1}{2},m)=y_{\geq k+\frac{1}{2}}(\frac{1}{2},m)
$$
and $y_{\geq k+\frac{1}{2}}(\frac{1}{2},m)\cdot w_n=0$.
\end{lem}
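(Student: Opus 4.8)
The plan is to exploit that, when the first argument of $y$ equals $\frac12$, the combinatorial index set $S(\frac12,m)$ consists of a single sequence, so that $y(\frac12,m)$ is actually a single monomial; both assertions then become immediate. First I would analyze $S(\frac12,m)$ directly from \eqref{a22sequences}: with $\ell=\frac12$, the first relation forces $p_N=0$ for every integer index $N\geq 0$ (each such term contributes a non-negative integer, and $\sum_{N\geq 0}p_N\geq 1$ would already exceed $\frac12$) and forces $\sum_{N\geq 0}p_{N+\frac12}=1$, so exactly one $p_{N_0+\frac12}$ equals $1$ and all other entries vanish; the second relation then gives $N_0=m-\frac12$, which is a non-negative integer precisely because $m\in\frac12+\bz_+$. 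Hence $S(\frac12,m)=\{p\}$ with $p_i=\d_{i,m}$, and evaluating \eqref{ydef} (the factors indexed by $N\neq N_0$ all being $1$) yields
$$y(\tfrac12,m)=-\frac{1}{2^{m-\frac12}}x_{-\frac{\a}{2}+m\d},$$
which is the special case already used in the proof of \lemref{Lm2}.

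Next I would dispose of the equality $y(\frac12,m)=y_{\geq k+\frac12}(\frac12,m)$: the unique $p\in S(\frac12,m)$ has $\sup(p)=\{m\}$, and the hypothesis $m\geq k+\frac12$ says exactly that $\{m\}\subseteq[k+\frac12,\infty)$, so $S_{\geq k+\frac12}(\frac12,m)=S(\frac12,m)$ and the two sums defining the two quantities are literally identical. For the vanishing on $w_n$ it then suffices, by the displayed formula, to show $x_{-\frac{\a}{2}+m\d}\cdot w_n=0$. Writing $m=j+\frac12$ with $j=m-\frac12\in\bz_+$, the hypothesis $m+\frac12>n$ reads $j+1>n$, i.e. $j\geq n$, so \lemref{Lm2} gives $x_{-\frac{\a}{2}+(j+\frac12)\d}\cdot w_n=0$; therefore $y_{\geq k+\frac12}(\frac12,m)\cdot w_n=y(\frac12,m)\cdot w_n=0$.

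There is no real obstacle here: the lemma is a direct unwinding of the definitions, and it functions as the degenerate $\ell=\frac12$ base case for the induction on $\ell$ carried out via \eqref{eq4} in the lemmas that follow. The only steps meriting a moment's care are the half-integer bookkeeping in the last paragraph (extracting $j\geq n$ from $m+\frac12>n$ together with $m\in\frac12+\bz_+$) and the observation that the vanishing ultimately rests on the previously established rank-one relation recorded in \lemref{Lm2}.
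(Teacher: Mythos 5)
Your proof is correct and follows essentially the same route as the paper: both rest on the observation that $S(\frac12,m)$ contains the single sequence $p_i=\d_{i,m}$, so that $y(\frac12,m)$ is one monomial whose support lies in $[k+\frac12,\infty)$ by the hypothesis $m\geq k+\frac12$. The only cosmetic difference is that the paper gets the vanishing directly from \eqref{eq1} (since $m+\frac12\in\bz$ and $m+\frac12>n$ give $\frac12+m\geq n+1$), whereas you route it through \lemref{Lm2}, which is itself just \eqref{eq1} applied to the same monomial.
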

\begin{proof}
The unique element in $S(\frac{1}{2},m)$ is the sequence $p$ for which $p_i=\d_{i,m}$, $i\in\frac{1}{2}\bz_+$. Since $m\geq k+\frac{1}{2}$ this sequence belongs to 
$S_{\geq k+\frac{1}{2}}(\frac{1}{2},m)$ and therefore
$$
y(\frac{1}{2},m)=y_{\geq k+\frac{1}{2}}(\frac{1}{2},m).
$$
The second claim then follows from \eqref{eq1}.
\end{proof}

\subsection{} The following is an extension of Lemma \ref{Lm3}.

\begin{lem}\label{Lm4}
Let $k\in\bz_+$. If $\ell,m\in\frac{1}{2}\bz_+$ such that 
\begin{equation}\label{eq2}
\ell+m\in \bz, \quad m+\ell>(2k+1)\ell+n-k-\frac{1}{2},\quad \text{and}\quad m\geq (2k+1)\ell.
\end{equation}
Then, 
\begin{equation}\label{eq3}
y_{\geq k+\frac{1}{2}}(\ell,m)\cdot w_n=0.
\end{equation}
\end{lem}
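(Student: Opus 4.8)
The plan is to prove \eqref{eq3} by strong induction on $2\ell\in\bn$, the engine being the expansion \eqref{eq4} together with \eqref{eq1}. The base case $\ell=\frac{1}{2}$ is exactly Lemma \ref{Lm3}: under \eqref{eq2} with $\ell=\frac{1}{2}$ one has $m+\frac{1}{2}>(2k+1)\frac{1}{2}+n-k-\frac{1}{2}=n$ and $m\geq(2k+1)\frac{1}{2}=k+\frac{1}{2}$, which are precisely the hypotheses of that lemma, so its conclusion gives $y_{\geq k+\frac{1}{2}}(\frac{1}{2},m)\cdot w_n=0$.

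For the inductive step, fix $\ell,m$ satisfying \eqref{eq2} with $2\ell\geq 2$ and rewrite \eqref{eq4} in the form
$$y_{\geq k+\frac{1}{2}}(\ell,m)=y(\ell,m)-\sum_{(i,r)\in T(\ell,m,k)}y_{<k+\frac{1}{2}}(\ell-i,m-r)\,y_{\geq k+\frac{1}{2}}(i,r),$$
and apply both sides to $w_n$. First I would dispose of the term $y(\ell,m)\cdot w_n$: since $k\geq 0$ and $\ell\geq\frac{1}{2}$ we have $(2k+1)\ell-k-\frac{1}{2}\geq(2k+1)\frac{1}{2}-k-\frac{1}{2}=0$, so \eqref{eq2} forces $\ell+m>n$, hence $\ell+m\geq n+1$ because $\ell+m\in\bz$; then \eqref{eq1} gives $y(\ell,m)\cdot w_n=0$.

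Next I would show that each summand annihilates $w_n$, for which it is enough to prove $y_{\geq k+\frac{1}{2}}(i,r)\cdot w_n=0$ for every $(i,r)\in T(\ell,m,k)$. If $i=0$, then $S(0,r)=\emptyset$ whenever $r\neq 0$, while the index $(0,0)$ does not lie in $T(\ell,m,k)$, since that would require $m\leq 2k\ell$, contradicting $m\geq(2k+1)\ell>2k\ell$ (here $\ell>0$); hence $y_{\geq k+\frac{1}{2}}(0,r)=0$ for all such pairs. If $i\geq\frac{1}{2}$, I would check that $(i,r)$ again satisfies \eqref{eq2}: the conditions $i+r\in\bz$ and $r\geq(2k+1)i$ are built into the definition of $T(\ell,m,k)$, and the constraint $m-r\leq 2k(\ell-i)$ from that definition gives $r\geq m-2k\ell+2ki$, while \eqref{eq2} gives $m-2k\ell>n-k-\frac{1}{2}$; combining these yields $r>2ki+n-k-\frac{1}{2}$, that is, $r+i>(2k+1)i+n-k-\frac{1}{2}$. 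Since also $i<\ell$ and $2i\geq 1$, the induction hypothesis applies to $(i,r)$ and gives $y_{\geq k+\frac{1}{2}}(i,r)\cdot w_n=0$; therefore $y_{<k+\frac{1}{2}}(\ell-i,m-r)\,y_{\geq k+\frac{1}{2}}(i,r)\cdot w_n=0$. Feeding both computations into the displayed identity yields $y_{\geq k+\frac{1}{2}}(\ell,m)\cdot w_n=0$, completing the induction.

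The only steps that require genuine care are the two elementary estimates $(2k+1)\ell-k-\frac{1}{2}\geq 0$ and the propagation of \eqref{eq2} from $(\ell,m)$ to the pairs $(i,r)\in T(\ell,m,k)$, together with ruling out the degenerate index $(0,0)$; I do not anticipate a real obstacle here, since the decomposition \eqref{eq4} is arranged precisely so that every tail factor $y_{\geq k+\frac{1}{2}}(i,r)$ carries a strictly smaller value of the parameter $\ell$.
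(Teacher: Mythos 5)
Your proof is correct and takes essentially the same route as the paper: induction on $\ell$ with Lemma~\ref{Lm3} as the base case, the decomposition \eqref{eq4} together with the check that every $(i,r)\in T(\ell,m,k)$ again satisfies \eqref{eq2}, and Lemma~\ref{Lm1} to annihilate $y(\ell,m)\cdot w_n$. The only (immaterial) difference is that you start the induction at $\ell=\frac{1}{2}$ and dispose of the $i=0$ indices directly inside the inductive step via $S(0,r)=\emptyset$, whereas the paper uses $\ell=0$ as its base case.
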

\begin{proof}
We will prove the statement by induction on $\ell\in \frac{1}{2}\bz_+$. Remark first that if $\ell=0$ then $S_{\geq k+\frac{1}{2}}(0,m)=\emptyset$ and therefore $y_{\geq k+\frac{1}{2}}(0,m)=0$, which implies the conclusion.

If $\ell=\frac{1}{2}$, then \eqref{eq2} is equivalent with the fact that $m$ satisfies the hypotheses of Lemma \ref{Lm3}. Therefore, by Lemma \ref{Lm3} our conclusion is satisfied.

Assume now that $\ell\geq 1$ and that \eqref{eq3} holds for all pairs $(i,r)$ satisfying \eqref{eq2} with $i<\ell$. Let $(i,r)\in T(\ell,m,k)$. Then, by the definition of $T(\ell,m,k)$, we have
$$
m+\ell=(m-r)+r+\ell\leq 2k(\ell-i)+r+\ell.
$$
On the other hand, by hypothesis,
$$
(2k+1)\ell+n-k-\frac{1}{2}<m+\ell
$$
so, 
$$
(2k+1)\ell+n-k-\frac{1}{2}<2k(\ell-i)+r+\ell
$$
which is equivalent to 
$$
(2k+1)i+n-k-\frac{1}{2}<r+i.
$$
Now, $r\geq (2k+1)i$ and $i+r\in \bz$ by the definition of $T(\ell,m,k)$, which means that $(i,r)$ satisfies \eqref{eq2} and, according to our induction hypothesis, 
$$
y_{\geq k+\frac{1}{2}}(i,r)\cdot w_n=0
$$
for all $(i,r)\in T(\ell,m,k)$. From \eqref{eq4} we obtain
\begin{equation}\label{eq5}
y(\ell,m)\cdot w_n=y_{\geq k+\frac{1}{2}}(\ell,m)\cdot w_n.
\end{equation}
Remark that, since $\ell\geq 1$, we have
$$m+\ell>(2k+1)\ell+n-k-\frac{1}{2}\geq 2k+1+n-k-\frac{1}{2}=n+k+\frac{1}{2}\geq n+\frac{1}{2},$$
but since $m+\ell\in \bz$ we can write
$$
m+\ell\geq n+1.
$$
In this situation Lemma \ref{Lm1} applies and therefore
$$
y(\ell,m)\cdot w_n=0.
$$
Combining this with \eqref{eq5} we obtain that 
$$
y_{\geq k+\frac{1}{2}}(\ell,m)\cdot w_n=0,
$$
which is our desired conclusion.
\end{proof}

\subsection{} We are now ready to show that the first relation in \eqref{a22rels} holds.

\begin{prop}\label{Prop1}
For $k\in\bz_+$ such that $0\leq k\leq n-1$ we have,
\begin{equation}
x_{-\frac{\a}{2}+(k+\frac{1}{2})\d}^{2n-2k}\cdot w_n=0.
\end{equation}
\end{prop}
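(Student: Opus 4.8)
The plan is to obtain $x_{-\frac{\a}{2}+(k+\frac{1}{2})\d}^{2n-2k}\cdot w_n=0$ by recognizing the divided power $x_{-\frac{\a}{2}+(k+\frac{1}{2})\d}^{(2n-2k)}$ as, up to a nonzero scalar, one of the elements $y_{\geq k+\frac{1}{2}}(\ell,m)$ appearing in \eqref{ydef}, for a choice of $(\ell,m)$ falling under Lemma~\ref{Lm4}. The right choice is
$$
\ell=n-k,\qquad m=(2k+1)(n-k),
$$
these being exactly the values that the defining equations \eqref{a22sequences} attach to the sequence $p$ that is supported only at the index $k+\frac12$, with $p_{k+\frac12}=2n-2k$.

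First I would verify that this pair $(\ell,m)$ satisfies the three conditions in \eqref{eq2} for the given $k$: one has $\ell+m=2(k+1)(n-k)\in\bz$; the inequality $m+\ell>(2k+1)\ell+n-k-\frac12$ holds with a margin of exactly $\frac12$; and $m=(2k+1)\ell$ holds with equality. Since $0\le k\le n-1$ we also have $\ell=n-k\ge 1$, so Lemma~\ref{Lm4} applies and gives
$$
y_{\geq k+\frac{1}{2}}(n-k,(2k+1)(n-k))\cdot w_n=0 .
$$

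Next I would show that $S_{\geq k+\frac{1}{2}}(n-k,(2k+1)(n-k))$ is a singleton — namely $\{p\}$ with $p$ as above — so that $y_{\geq k+\frac{1}{2}}(n-k,(2k+1)(n-k))$ is a single term of \eqref{ydef}. For any $p$ in this set the support condition forces $p_{N+\frac12}=0$ for $N<k$ and $p_N=0$ for $N\le k$; subtracting $2k+1$ times the first equation in \eqref{a22sequences} from the second then leaves
$$
\sum_{N\ge k}(N-k)\,p_{N+\frac12}+\sum_{N\ge k+1}(2N-2k-1)\,p_N=0 ,
$$
all of whose coefficients are non-negative, so every entry except $p_{k+\frac12}$ must vanish, and then the first equation forces $p_{k+\frac12}=2n-2k$. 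With this, \eqref{ydef} reads
$$
y_{\geq k+\frac{1}{2}}(n-k,(2k+1)(n-k))=2^{-k(2n-2k)}\,x_{-\frac{\a}{2}+(k+\frac{1}{2})\d}^{(2n-2k)},
$$
and combining with the vanishing above, and then clearing the divided-power denominator, finishes the argument.

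The heavy lifting has already been done in Lemma~\ref{Lm4} (and the induction feeding it, via Lemma~\ref{Lm1}, Lemma~\ref{Lm3}, and the splitting \eqref{eq4}); the only place in the present step where one must be a little careful is the identification of $S_{\geq k+\frac{1}{2}}(n-k,(2k+1)(n-k))$ as a single sequence, which is where the precise half-integer bookkeeping matters.
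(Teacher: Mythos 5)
Your proposal is correct and follows exactly the paper's own argument: apply Lemma~\ref{Lm4} with $\ell=n-k$, $m=(2k+1)(n-k)$, then show by the same subtraction trick that $S_{\geq k+\frac12}(n-k,(2k+1)(n-k))$ consists of the single sequence with $p_{k+\frac12}=2n-2k$, so that $y_{\geq k+\frac12}(n-k,(2k+1)(n-k))$ is the nonzero scalar $2^{-2k(n-k)}$ times the divided power $x_{-\frac{\a}{2}+(k+\frac12)\d}^{(2n-2k)}$. All the verifications (the hypotheses \eqref{eq2}, the singleton identification, and the scalar) check out, so the argument is complete.
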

\begin{proof}
We apply Lemma \ref{Lm4} for $\ell=n-k$ and $m=(2k+1)(n-k)$ for which the hypotheses of Lemma \ref{Lm4} are easily checked. We obtain that
\begin{equation}\label{eq6}
y_{\geq k+\frac{1}{2}}(n-k,(2k+1)(n-k))\cdot w_n=0.
\end{equation}
Let $p\in S_{\geq k+\frac{1}{2}}(n-k,(2k+1)(n-k))$. By definition, we have 
\begin{equation}
\begin{aligned}
&\sum_{N\geq k} \frac{1}{2}p_{N+\frac{1}{2}} +\sum_{N\geq k+1} p_N=n-k,\\
&\sum_{N\geq k} \frac{2N+1}{2}p_{N+\frac{1}{2}} +\sum_{N\geq k+1} 2Np_N=(2k+1)(n-k) .
\end{aligned}
\end{equation}
Multiplying the first equality by $2k+1$ and subtracting from the second equality we obtain
$$
\sum_{N\geq k} \frac{(2N+1)-(2k+1)}{2}p_{N+\frac{1}{2}} +\sum_{N\geq k+1} (2N-2k-1)p_N=0.
$$ 
All terms in the above equality are non-negative, which implies that they must be all zero. Therefore, 
$$
p_i=(2n-2k)\delta_{i,k+\frac{1}{2}}, ~i\in \frac{1}{2}\bz_+.
$$
As $S_{\geq k+\frac{1}{2}}(n-k,(2k+1)(n-k))$ contains a unique sequence, we have
$$
y_{\geq k+\frac{1}{2}}(n-k,(2k+1)(n-k))=\frac{1}{2^{2k(n-k)}}
{x_{-\frac{\a}{2}+(k+\frac{1}{2})\d}^{(2n-2k)}}.
$$
Together with \eqref{eq6}, this implies our conclusion.
\end{proof}

\subsection{}

We have proved the following.
\begin{prop}\label{rankoneA22}
Let $\Gaff$ be an  affine Lie algebra of type $A_2^{(2)}$, and let $\l\in P^+$. Then,
$$
x_\a^{k_\a+1}\in \Ann_{\bu(\CG)}(w_\l), \quad \text{for all }~ \a\in \widehat{R}_{re}(-)\setminus R^-, ~k_\a=\max\{0,~-(\l+\L_0,\a^\vee)\}.
$$
\end{prop}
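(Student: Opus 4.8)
\emph{Strategy.} Since $\G$ has type $A_1$, $P^+=\bz_+\L$, so write $\l=n\L$ with $n\in\bz_+$; then $W(\l,\bi_{\l,0})=W(n\L,\bi_{n\L,0})$ and $w_\l=w_n$. In this case
\[
\widehat{R}_{re}(-)\setminus R^-=\{\,-\a+2k\d:k\in\bn\,\}\cup\{\,-\tfrac{\a}{2}+(k+\tfrac12)\d:k\in\bz_+\,\},
\]
so it is enough to check $x_\beta^{k_\beta+1}\cdot w_n=0$ for $\beta$ in these two families. First I would record the coroot pairings: a direct computation in the $A_2^{(2)}$ root datum (using that $\l+\L_0$ has level one and $\l(\a^\vee)=n$) gives
\[
-(\l+\L_0,(-\a+2k\d)^\vee)=n-k,\qquad -(\l+\L_0,(-\tfrac{\a}{2}+(k+\tfrac12)\d)^\vee)=2(n-k)-1,
\]
whence $k_\beta+1=\max\{1,n-k+1\}$ when $\beta=-\a+2k\d$ and $k_\beta+1=\max\{1,2(n-k)\}$ when $\beta=-\tfrac{\a}{2}+(k+\tfrac12)\d$. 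Thus the asserted membership in $\Ann_{\bu(\CG)}(w_n)$ is exactly the list of relations \eqref{sl2rels} and \eqref{a22rels}, and the proof reduces to establishing those.

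\emph{The long-root family.} The span $\bigoplus_{m\in\bz}\Gaff_{\pm\a+2m\d}\oplus\H$ is a copy of the current algebra of type $A_1^{(1)}$ inside $\CG$ (with imaginary root $2\d$), and with respect to it $w_n$ satisfies the defining relations of the graded local Weyl module of highest weight $n\L$. Hence \eqref{sl2rels} is the classical $\lie{sl}_2[t]$ statement identifying the graded local Weyl module with the level-one Demazure module, i.e.\ \cite[Corollary 1.5.1]{CL06} (equivalently, Garland's relations). This disposes of every $\beta=-\a+2k\d$.

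\emph{The short-root family.} For $k\ge n$ the relation $x_{-\frac{\a}{2}+(k+\frac12)\d}\cdot w_n=0$ is Lemma~\ref{Lm2}, and for $0\le k\le n-1$ the relation $x_{-\frac{\a}{2}+(k+\frac12)\d}^{\,2n-2k}\cdot w_n=0$ is Proposition~\ref{Prop1}. Both come from the identity of Lemma~\ref{fullrelations}, which rewrites $x_{\frac{\a}{2}+\frac12\d}^{(2m)}x_{-\a}^{(\ell+m)}$ modulo $\bu(\CG)\CN^+$ as the element $y(\ell,m)$: feeding the highest-weight relation $x_{-\a}^{n+1}w_n=0$ through Lemma~\ref{Lm1} kills $y(\ell,m)w_n$ whenever $\ell+m\ge n+1$, while the decomposition \eqref{eq4} (peeling off the high-degree factors) together with the nested induction of Lemma~\ref{Lm4} isolates the single surviving divided-power monomial $x_{-\frac{\a}{2}+(k+\frac12)\d}^{(2n-2k)}$. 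Combining the two families with the coroot computation yields the Proposition; the argument parallels the $A_1^{(1)}$ treatment of \cite{CV13}.

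\emph{Main obstacle.} The final assembly is immediate; the real difficulty, handled in Lemma~\ref{Lm4} and Proposition~\ref{Prop1}, is the induction on $\ell\in\tfrac12\bz_+$ that controls the cross terms $y_{<k+\frac12}(\ell-i,m-r)\,y_{\ge k+\frac12}(i,r)$ in \eqref{eq4}. One must verify that every $(i,r)$ in the cutoff set $T(\ell,m,k)$ again satisfies the inductive hypothesis (this is where the defining inequalities $m-r\le 2k(\ell-i)$ and $r\ge(2k+1)i$ are used) and that the surviving index set $S_{\ge k+\frac12}(n-k,(2k+1)(n-k))$ collapses to a single sequence, via the positivity of $\sum_{N\ge k}\tfrac{(2N+1)-(2k+1)}{2}p_{N+\frac12}+\sum_{N\ge k+1}(2N-2k-1)p_N=0$. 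Keeping the half-integer indexing, the parity constraint $\ell+m\in\bz$, and the normalization scalars consistent throughout, and pinning down the $A_2^{(2)}$ coroot pairings with the correct length normalization, are the points needing care.
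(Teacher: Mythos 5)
Your proposal is correct and takes essentially the same route as the paper: the proposition is precisely the collection of relations \eqref{sl2rels} and \eqref{a22rels}, with the long-root family disposed of via the embedded $\lie{sl}_2[t]$ copy and the short-root family via Lemma \ref{fullrelations}, Lemmas \ref{Lm1}--\ref{Lm4}, and Proposition \ref{Prop1}. Your explicit computation of the pairings, giving $k_\beta+1=\max\{1,\,n-k+1\}$ for $\beta=-\a+2k\d$ and $k_\beta+1=\max\{1,\,2(n-k)\}$ for $\beta=-\tfrac{\a}{2}+(k+\tfrac12)\d$, is the one step the paper leaves implicit, and it checks out in the standard $A_2^{(2)}$ normalization.
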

Taking into account Corollary \ref{annihilators} this is equivalent to Theorem \ref{locWeyl=Dem} for $\Gaff$  of type $A_2^{(2)}$.


\section{Weyl modules for the standard maximal parabolic subalgebras of twisted affine Lie algebras}  \label{nine}

\subsection{} In this section, we consider the special maximal parabolic subalgebra in the remaining idecomposable twisted affine Lie algebras. We shall be fairly brief here since  the treatment is conceptually identical and technically easier than the considerations for the hyperspecial maximal parabolic of the affine Lie algebra of type $A_{2n}^{(2)}$. The papers \cite{FMSa, FMS} discuss some of these ideas in greater generality but the precise results that we prove here are not stated, their methods are different, and the statements that might be relevant for us are conditional on the validity of certain assumptions. It  seems worthwhile to include a short self-contained treatment in this paper.


\subsection{}  

Let $\G$ be a simple Lie algebra of type $A_{2n-1}$, $n\geq 3$, $D_{n+1}$, $n\geq 2$, or $E_6$. The outer automorphism group of $\G$ is non-trivial. We fix  a non-trivial outer automorphism $\sigma$ and we denote by $m$ its order. The only possible value of $m$ is $2$ with the exception of the Lie algebra of type $D_4$ for which $m$ could also be equal to $3$. All the outer automorphisms arise from an automorphism of the Dynkin diagram of $\G$ and we can therefore regard $\sigma$ as acting on the labels of the vertices in the Dynkin diagram. For $1\leq i\leq \rank(\G)$, we denote by $\orb(i)$ and $\stab(i)$ the orbit of $i$ and, respectively, the stabilizer of $i$ under the action of the group generated by $\sigma$. We assume that the labelling of the Dynkin diagram of $\G$ is such that $1,\dots, n$ form a set of representatives for the orbits of $\sigma$. The integers $r_i=|\stab(i)|$, $1\leq i \leq n$, will play a role later on.

We denote by $\G^\sigma$ the fixed point subalgebra of $\G$. Note that $\G^\sigma$ is itself a simple Lie algebra of type $C_n$, $n\geq 3$, $B_n$, $n\geq 2$, or $F_4$, if $\G$ is of type $A_{2n-1}$, $n\geq 3$, $D_{n+1}$, $n\geq 2$, or $E_6$ and $m=2$, and $\G^\sigma$ is a simple Lie algebra of type $G_2$ if $\G$ is of type $D_4$ and $m=3$. In what follows $n$ will denote the rank of $\G^\sigma$.

We fix a Cartan subalgebra $\H$ of $\lie g$ and we denote the corresponding set of roots, the root lattice, and the weight lattice by $R_{\lie g}$, $Q_{\lie g}$, and $P_{\lie g}$, respectively. We also fix  a basis of $\G$ with respect to $\H$ and denote the corresponding set of positive roots, the $\bz_+$-cone spanned by the positive roots, and the cone of dominant weights by $R^+_{\lie g}$, $Q^+_{\lie g}$, and $P^+_{\lie g}$, respectively. Let $\lie n^\pm$ be the nilpotent subalgebra of $\G$ determined by $\pm R_\G^+$. 

The subalgebra $\lie h^\sigma=\lie g^\sigma\cap\lie h$ is a Cartan subalgebra of $\lie g^\sigma$ and we denote by $R$, $Q$, and $P$ the corresponding set of roots, the root lattice, and the weight lattice, respectively. Similarly, $\G^\sigma \cap\lie n^+$ is the nilpotent radical of a Borel subalgebra of $\G^\sigma$ and we denote the corresponding set of positive roots, the $\bz_+$-cone spanned by the positive roots, and the cone of dominant weights by $R^+$, $Q^+$, and $P^+$, respectively. Also, we fix basis $\{\a_1,\dots,\a_n\}$ of $R$ and a Chevalley basis $\{x_\alpha, ~h_i~:~\alpha\in R,~ 1\le i\leq n \}$ for $\G^\sigma$.

As in Section \ref{handh} of this paper, we identify $(\lie h^\sigma)^*$ with a subspace of $\lie h^*$ so that $P^+$ is a subset of $P^+_{\lie g}$ and the fundamental weights $\{\omega_i\}_{1\leq i\leq n}$ of $\lie g^\sigma$ are a subset of the fundamental weights $\{\omega_i\}_{1\leq i\leq \rank(\G)}$ of $\lie g$.  For $\lambda\in P^+$  we denote by $V(\lambda)$  the irreducible highest weight module for $\lie g^\sigma$  with highest weight $\lambda$ and for $\mu\in P^+_{\lie g}$ we denote by $V_{\lie g}(\mu)$ the irreducible highest weight module for $\lie g$ with highest weight $\mu$.


\subsection{}  Let $L(\lie g) =\lie g\otimes\bc[t, t^{-1}]$ be the loop algebra of $\lie g$ with the usual Lie bracket, given by the $\bc[t,t^{-1}]$--bilinear extension of the Lie bracket on $\lie g$, and denote by $\G[t]$ the current algebra associated to $\G$.  Extend $\sigma$ to an order $m$ automorphism of $L(\lie g)$ defined by $$\sigma(x\otimes t^i)=\sigma(x)\otimes e^{-2\pi i/m}t^i.$$

Modulo the center and the scaling element, the special maximal parabolic subalgebra of an affine Lie algebra of type $A_{2n-1}^{(2)}$, $n\geq 3$, $D_{n+1}^{(2)}$, $n\geq 2$, $E_6^{(2)}$, or $D_4^{(3)}$ is isomorphic to the fixed point subalgebra $\G[t]^\sigma$ of $\G[t]$, for $\G$  of type $A_{2n-1}$, $n\geq 3$, $D_{n+1}$, $n\geq 2$, $E_6$ and $m=2$, and  of type $D_4$ and $m=3$, respectively. Remark that both $\G[t]$ and $\G[t]^\sigma$ are naturally $\bz_+$-graded and the homogeneous components of degree zero are $\G$ and, respectively, $\G^\sigma$. We denote by $\ev_0:\lie g[t]^\sigma\to \lie g^\sigma$ the projection onto the homogeneous component of degree zero.

 Remark that the Lie  subalgebras  $\lie n^\pm[t]: =\lie n^\pm\otimes \bc[t]$ and $\lie h[t]: =\lie h^\pm\otimes \bc[t]$ are stabilized by $\sigma$ and we have a triangular decomposition $$\lie g[t]^\sigma=\lie n^-[t]^\sigma\oplus\lie h[t]^\sigma\oplus \lie n^+[t]^\sigma.$$ We denote by $\H[t]^\sigma_+$ the ideal of $\H[t]^\sigma$ generated by positive degree homogeneous elements.
 
Given $\boz= (z_1,\dots, z_k)\in(\bc^\times)^k$ and $N\geq 1$, let 
$$
\ev_{\boz, N}: L(\G)\to \oplus_{s=1}^N \G^{z_s,N}, \quad \G^{z_s,N}=\G\otimes \frac{\bc[t,t^{-1}]}{((t-z_s)^N)},
$$
be the canonical Lie algebra morphism and  let 
$$
\Psi_{\boz, N}: \lie g[t]^\sigma \hookrightarrow L(\G)\to \oplus_{s=1}^N \G^{z_s,N},
$$ be the restriction  of $\ev_{\boz, N}$ to $\lie g[t]^\sigma$. We have the following analog of Lemma \ref{generalev}.
\begin{lem} If $z_s\ne z_p$ for $1\le s\ne p\le k$ then $\ev_{\boz, N}$ is surjective. If  $z_s^m \ne  z_p^m$ for $1\le s\ne p\le k$, then $\Psi_{\boz, N}$ is surjective.
\end{lem}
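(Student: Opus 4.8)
The plan is to mimic the proof of Lemma~\ref{generalev}, although the argument is in fact cleaner here since $\lie g[t]^\sigma$ is an honest fixed-point subalgebra of $\lie g[t]$. For the first assertion I would argue exactly as there: when $z_s\ne z_p$ for $1\le s\ne p\le k$, the Chinese Remainder Theorem gives an isomorphism $\bc[t,t^{-1}]/\big(((t-z_1)\cdots(t-z_k))^N\big)\cong\bigoplus_{s=1}^k\bc[t,t^{-1}]/((t-z_s)^N)$, and $\ev_{\boz,N}$ is obtained from it by tensoring with $\lie g$; hence it is surjective.

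For the second assertion, write $P(t)=((t-z_1)\cdots(t-z_k))^N$. Since each $z_s\ne 0$ we may replace $\bc[t,t^{-1}]/(P(t))$ by $\bc[t]/(P(t))$, and by CRT (using $z_s^m\ne z_p^m\Rightarrow z_s\ne z_p$) it suffices to prove that the composite $\Psi\colon\lie g[t]^\sigma\hookrightarrow L(\lie g)\to\lie g\otimes\bc[t]/(P(t))$ is surjective. First I would unpack $\lie g[t]^\sigma$: decomposing $\lie g=\bigoplus_{l=0}^{m-1}\lie g_l$ into the $\sigma$-eigenspaces $\lie g_l=\{x:\sigma(x)=e^{2\pi\sqrt{-1}\,l/m}x\}$, one gets $\lie g[t]^\sigma=\bigoplus_{l=0}^{m-1}\lie g_l\otimes t^l\bc[t^m]$. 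Hence the image of $\Psi$ contains $\lie g_l\otimes\big(t^l\cdot(\bc[t^m]\bmod P(t))\big)$ for each $l$, and since $\bigoplus_l\lie g_l=\lie g$, surjectivity of $\Psi$ follows once I show $t^l\cdot(\bc[t^m]\bmod P(t))=\bc[t]/(P(t))$ for every $l$. As $P(0)\ne 0$, the class of $t$ is a unit modulo $P(t)$, so this reduces to the statement that the ring homomorphism $\bc[t^m]\to\bc[t]/(P(t))$ is surjective.

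This last statement is the heart of the argument and is exactly where the hypothesis $z_s^m\ne z_p^m$ enters. By CRT, $\bc[t]/(P(t))=\bigoplus_s\bc[t]/((t-z_s)^N)$, and the image of $\bc[t^m]$ is the subring generated by the single element $\overline{t^m}$, whose $s$-th component has constant term $z_s^m$. Since these constant terms are pairwise distinct, Lagrange interpolation produces a polynomial in $\overline{t^m}$ equal to the idempotent $e_s$ supported on the $s$-th summand, so all of these idempotents lie in the image. In the $s$-th factor, setting $u=t-z_s$, one has $t^m-z_s^m=mz_s^{m-1}u+O(u^2)$ with $mz_s^{m-1}\ne 0$ (again because $z_s\ne 0$), so $\overline{t^m}-z_s^m$ generates the maximal ideal of $\bc[t]/((t-z_s)^N)$; hence the subring generated by $e_s$ and $e_s\overline{t^m}$ is the whole $s$-th factor, and summing over $s$ gives the required surjectivity. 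I would also record, as in the Remark following Lemma~\ref{generalev}, that the same computation shows the restriction of $\Psi_{\boz,N}$ to the sum of the homogeneous components of $\lie g[t]^\sigma$ of degree $\ge r$ remains surjective for every $r\in\bz_+$, since the interpolating polynomials in $t^m$ can be multiplied by an arbitrarily high power of $t^m$. The only genuinely new ingredient compared with Lemma~\ref{generalev} is the elementary commutative-algebra fact in this last paragraph, which replaces the explicit Chevalley-generator computation used there; everything else is formal.
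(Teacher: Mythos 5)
Your argument is correct, and it takes a genuinely different route from the one the paper has in mind. The paper gives no separate proof of this lemma: it is stated as the analogue of Lemma \ref{generalev}, whose proof is Lie-theoretic --- it writes $1=A(t)P(t)+B(t)P(-t)$, manipulates the explicit elements $X^{\pm}_{i,i}\otimes t^r+X^{\pm}_{2n+1-i,2n+1-i}\otimes(-t)^r$ lying in the subalgebra, and then uses that each graded component of $\lie{sl}_{2n+1}\otimes\bc[t]$ is generated as a $\G$-module by these root vectors; the intended twisted analogue would run on the pairwise coprimality of $P(t),P(\zeta t),\dots,P(\zeta^{m-1}t)$, $\zeta=e^{2\pi\sqrt{-1}/m}$, which is exactly what the hypothesis $z_s^m\ne z_p^m$ (together with $z_s\ne 0$) guarantees. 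You instead exploit the fact that here, unlike in Section \ref{realization}, the algebra is an honest fixed-point algebra, $\lie g[t]^\sigma=\bigoplus_{l=0}^{m-1}\lie g_l\otimes t^l\bc[t^m]$, and you reduce the whole lemma to the purely commutative statement that $\bc[t^m]\to\bc[t]/(P(t))$ is onto when the $z_s^m$ are pairwise distinct. This is cleaner: it makes the role of the hypothesis transparent, requires no generation argument inside $\lie g$, and yields the analogue of the Remark following Lemma \ref{generalev} for free, whereas the paper's method buys uniformity with the hyperspecial case of Section \ref{realization}, where no eigenspace decomposition of this kind is available.

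One small slip in the commutative step: for $N>1$, plain Lagrange interpolation only produces $f_s(\overline{t^m})$ congruent to the idempotent $e_s$ modulo the nilradical, not $e_s$ itself. This is easily repaired: either interpolate to order $N$, i.e.\ choose $f_s$ with $f_s\equiv \delta_{sp} \bmod (u-z_p^m)^N$ for all $p$ (CRT in $\bc[u]$), which gives $f_s(\overline{t^m})=e_s$ exactly; or note that $f_s(\overline{t^m})^N=e_sv_s$ with $v_s$ a unit of the $s$-th factor, which serves the same purpose in your final step; or bypass idempotents entirely by observing that the minimal polynomial of $\overline{t^m}$ is $\prod_s(u-z_s^m)^N$, of degree $kN=\dim_\bc\,\bc[t]/(P(t))$, so the subalgebra it generates is everything. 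None of this affects the correctness of the overall argument.
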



\subsection{}  
Given $\lambda\in P^+$,  set $$P(\lambda)=\bu(\lie g[t]^\sigma)\otimes_{\bu(\lie g^\sigma)} V(\lambda).$$  
The global Weyl module $W(\lambda)$ is defined to to be the maximal  $\lie g[t]^\sigma$--module quotient of $P(\lambda)$ with weights contained in $\lambda- Q^+$. Let $v_\lambda$ denote a highest weight vector of $V(\lambda)$ and denote by  $w_\lambda$ be the image of $1\otimes v_\lambda$ in $W(\lambda)$. The module $W(\lambda)$ is 
generated by an element $w_\lambda$ with  relations 
$$
\lie n^+[t]^\sigma w_\lambda=0,\quad  h w_\lambda=\lambda(h)w_\lambda,\quad (x_{-\alpha}\otimes 1)^{\lambda(\alpha^\vee)+1} w_\lambda=0,
$$ 
for all $ h\in\lie h^{\sigma}$ and $\alpha\in R^+$,  where  $\alpha^\vee$ is the co-root of $\G^\sigma$ corresponding to $\alpha$.

There is a natural right action of  $\lie h[t]^\sigma$ on $ W(\lambda)$ and we let $\ba_\lambda$ be the quotient of $\bu(\lie h[t]^\sigma_+)$ by the ideal $$\Ann (w_\lambda)= \{u\in \bu(\lie h[t]^\sigma_+): uw_\lambda=w_\lambda u=0\}.$$  Then, $W(\lambda)$ is a $(\lie g[t]^\sigma,\ba_\lambda)$--bimodule.

Given a maximal ideal $\bi$ of $\ba_\lambda$, define the local Weyl module $W(\lambda,\bi)$  as 
$$
W(\lambda, \bi)=W(\lambda)\otimes_{\ba_\lambda}\ba_{\lambda}/\bi.
$$
We remark that the modules $W(\lambda)$  are naturally $\bz_+$-graded by assigning degree zero to $w_\lambda$. The ideal $\Ann(w_\lambda)$ is also a graded ideal and  in consequence the algebra $\ba_\lambda$ is a $\bz_+$-graded algebra. Let $\bi_{\lambda,0}$ be the maximal graded  ideal of $\ba_\lambda$. The local Weyl modules $W(\lambda,\bi_{\lambda,0})$ are the only graded local Weyl modules. 


\subsection{} The local Weyl modules for $L(\lie g)$ are indexed by  polynomials  $\bpi=(\pi_1,\dots, \pi_{\rank(\G)})$  in an indeterminate $u$ where  each  $\pi_i$ has constant term one and are subject to the notation in Section \ref{untwistedweyl}.  Furthermore, we denote by $\bvpi_i$ the $\rank(\G)$--tuple  of polynomials with entry $(1-u)$ in the $i^{\rm th}$ place and $1$ elsewhere.

The following analogues of Theorem \ref{charietal}  and Proposition \ref{piw} will be needed. Part (i) was proved in \cite{CL06} for $\G$ of type $A_n$ and in \cite{FoL07} for $\G$ simply-laced,  part (ii) was proved in \cite{CP01}, part (iii) was proved in \cite[Section 2.7]{CFS08}, and the proof of part (iv) is the same as the proof of Proposition \ref{piw}.
\begin{thm} Let $\bpi=(\pi_1,\dots, \pi_{\rank(\G)})$ be as above and let $W(\bpi)$ be the corresponding local Weyl module for $L(\G)$. Then,
\begin{enumerit}
\item[(i)] We have,  $$\dim W(\bpi)=\prod_{i=1}^n \dim W(\bvpi_i)^{\deg\pi_i}.$$ 

\item[(ii)] The unique irreducible quotient $V(\bpi)$ of $W(\bpi)$ is isomorphic to $\ev_{\boz, 1}^*(V_{\lie g}(\lambda_1)\otimes\cdots\otimes V_{\lie g}(\lambda_k))$ where 
 $$\lambda_s=\sum_{i=1}^{\rank{\G}}m_{i,s}\omega_i,\quad 1\leq s\leq k, \quad\text{and}\quad \pi_i=\prod_{s=1}^k(1-z_su)^{m_{i,s}}, \quad 1\leq i\leq \rank(\G).$$

\item[(iii)]  For $N\in\bn$ sufficiently large, we have $$\left(\lie g\otimes \prod_{s=1}^k(t-z_s)^N\bc[t,t^{-1}]\right)W(\bpi)=0.$$

\item[(iv)] Let  $z_1,\dots, z_k$ be  the distinct roots of  $\pi_1\cdots\pi_n$ and assume that $z_s^m\ne z_p^m$ for $1\le s\ne p\le k$. Let 
$$
\lambda=\sum_{i=1}^n \left(\sum_{j\in\orb(i)}\deg\pi_{j}\right)\omega_i.
$$
There exists a maximal ideal $\bi_\bpi$ in $\ba_\lambda$ such that $\Psi^*_{\boz, N} W(\bpi)$ is a quotient of $W(\lambda,\bi_\bpi)$. In consequence, $$\dim W(\lambda,\bi_\bpi)\ge \Psi^*_{\boz, N} W(\bpi).$$
\end{enumerit}
\end{thm}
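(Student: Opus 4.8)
The plan is to treat parts (i)--(iii) as recalled facts about the untwisted loop algebra $L(\G)$ and to concentrate on part (iv), which is the only genuinely new assertion. Since every $\G$ occurring here is simply laced, part (i) is the dimension formula of \cite{FoL07} (and of \cite{CL06} in type $A$), part (ii) is \cite{CP01}, and part (iii) is \cite[Section 2.7]{CFS08}; nothing needs to be added for these.

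For part (iv) I would run the proof of Proposition \ref{piw} essentially verbatim, with the explicit realization of $\CG$ there replaced by the fixed--point realization $\G[t]^\sigma=(\G\otimes\bc[t])^\sigma\subset L(\G)$. First I would pull the $L(\G)$--module $W(\bpi)$ back through $\Psi_{\boz,N}$ and verify that its cyclic generator $w_\bpi$ satisfies the defining relations of the global Weyl module $W(\lambda)$ for $\G[t]^\sigma$, with $\lambda$ as in the statement. The inclusion $\lie n^+[t]^\sigma\subset\lie n^+\otimes\bc[t]\subset L(\lie n^+)$ gives $\lie n^+[t]^\sigma w_\bpi=0$; since $W(\bpi)$ is finite dimensional over $\G\otimes 1$ it is in particular $\G^\sigma$--integrable, so $w_\bpi$ is a $\G^\sigma$--highest weight vector and the relation $(x_{-\alpha}\otimes 1)^{\lambda(\alpha^\vee)+1}w_\bpi=0$ for $\alpha\in R^+$ follows automatically from finite dimensional $\G^\sigma$--representation theory once the weight of $w_\bpi$ is identified. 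That weight computation is the one bookkeeping point requiring attention: under the identification of $(\lie h^\sigma)^*$ with a subspace of $\lie h^*$, the coroot $\alpha_i^\vee$ corresponds to $\sum_{j\in\orb(i)}H_j$, and each $H_j$ acts on $w_\bpi$ by $\deg\pi_j$, so $w_\bpi$ has $\G^\sigma$--weight $\lambda=\sum_{i=1}^n\bigl(\sum_{j\in\orb(i)}\deg\pi_j\bigr)\omega_i$, exactly the analog of the factor $\deg\pi_i+\deg\pi_{2n+1-i}$ in Proposition \ref{piw}, where the relevant orbit is $\{i,2n+1-i\}$.

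The remaining steps are formal. Since $\dim\bu(\H[t]^\sigma_+)w_\bpi=1$, the right annihilator of $w_\bpi$ in $\bu(\H[t]^\sigma_+)$ descends to a maximal ideal $\bi_\bpi$ of $\ba_\lambda$ with $\bi_\bpi w_\bpi=0$, whence a surjection $W(\lambda,\bi_\bpi)\twoheadrightarrow\bu(\G[t]^\sigma)w_\bpi$. By part (iii) we may fix $N$ so large that the $L(\G)$--action on $W(\bpi)$ factors through $\bigoplus_{s=1}^k\G^{z_s,N}$; the hypothesis $z_s^m\ne z_p^m$ for $s\ne p$ then allows us to invoke the analog of Lemma \ref{generalev} stated above for $\Psi_{\boz,N}$, and -- as in the remark following Lemma \ref{generalev} -- its proof survives restriction to the sum of the homogeneous components of $\G[t]^\sigma$ of degree at least $r$, which, exactly as in Section \ref{surj}, forces $\bu(\G[t]^\sigma)w_\bpi=\Psi^*_{\boz,N}W(\bpi)$. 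Combining this identification with the surjection gives $\dim W(\lambda,\bi_\bpi)\ge\dim\Psi^*_{\boz,N}W(\bpi)$, and part (i) evaluates the right--hand side. The only place genuine care is needed is in establishing the twisted analog of Lemma \ref{generalev} under the hypothesis $z_s^m\ne z_p^m$ and in the weight identification above; everything else is a transcription of the $A_{2n}^{(2)}$ argument, as the authors emphasize.
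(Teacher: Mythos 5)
Your proposal is correct and follows the paper's own route exactly: parts (i)--(iii) are quoted from \cite{CL06,FoL07}, \cite{CP01}, and \cite{CFS08}, and part (iv) is proved by repeating the argument of Proposition \ref{piw}, using the twisted analogue of Lemma \ref{generalev} under the hypothesis $z_s^m\ne z_p^m$. Your extra bookkeeping (identifying $\alpha_i^\vee$ with $\sum_{j\in\orb(i)}H_j$ so that $w_\bpi$ has weight $\lambda$, and deducing the relations $(x_{-\alpha}\otimes 1)^{\lambda(\alpha^\vee)+1}w_\bpi=0$ from finite--dimensionality) is a sound filling-in of details the paper leaves implicit.
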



\subsection{}  The arguments in Sections \ref{five} and \ref{six}  can now emulated to establish the following result.

\begin{thm} \label{thm8}
Let $\lambda\in P^+$. The algebra $\ba_\lambda$ is a graded polynomial algebra in variables $T_{i,r}$ of grade $r_ir$, $1\le i\le n$ and $1\le r\le\lambda(\alpha_i^\vee)$. The module $W(\lambda)$ is a finitely generated $\ba_\lambda$-module and for any maximal ideal $\bi$ of $\ba_\lambda$, we have
$$\dim W(\lambda, \bi_{\lambda,0})\geq \dim W(\lambda, \bi)\geq \prod_{i=1}^n \dim W(\bvpi_i)^{\lambda(\alpha_i^\vee)}.$$
\end{thm}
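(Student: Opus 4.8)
The plan is to run the arguments of Sections~\ref{five} and~\ref{six} in this setting; the only genuinely new input is that the rank-one current algebra one reduces to is $\lie{sl}_2[t]$, re-scaled by $t\mapsto t^{r_i}$ at the node $i$, which is exactly what produces the grade $r_ir$. First I would introduce, for $1\le i\le n$ and $r\in\bz_+$, elements $P_{i,r}\in\bu(\H[t]^\sigma_+)$ of grade $r_ir$ as the image under the Poincar\'e--Birkhoff--Witt projection $\bu(\lie g[t]^\sigma)\to\bu(\H[t]^\sigma)$ of an appropriate product of divided powers of the generators attached to $\a_i$ (the generator used, and hence the degree shift $r_i$, depends on whether the node $i$ comes from a $\sigma$-fixed node of $\lie g$). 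Since the rank-one current subalgebra attached to $\a_i$ is, up to $t\mapsto t^{r_i}$, the algebra $\lie{sl}_2[t]$, Garland's identity (\cite{G78}, in the form recalled in \cite{CP01}) gives a recursion for the $P_{i,r}$ in terms of the $\lie h$-currents and shows that $\bu(\H[t]^\sigma_+)$ is the graded polynomial algebra on $\{P_{i,r}:1\le i\le n,\ r\in\bn\}$. The defining relations of $W(\lambda)$ give $P_{i,r}w_\lambda=0$ for $r>\lambda(\a_i^\vee)$, so $\ba_\lambda$ is a graded quotient of the polynomial algebra $\widetilde{\ba}_\lambda$ on the $P_{i,r}$ with $1\le r\le\lambda(\a_i^\vee)$; to see this is an isomorphism I would, as in the proof of \thmref{alambda}, realize arbitrary scalars as the $P_{i,r}$-action on the cyclic vector of a tensor product $\ev_0^*V(\lambda_0)\otimes\Psi^*_{z_1,1}V_{\lie g}(\lambda_1)\otimes\cdots\otimes\Psi^*_{z_k,1}V_{\lie g}(\lambda_k)$, which by the analogues of \eqref{genmualt} and \lemref{tp} is a quotient of $W(\lambda)$, forcing every nonzero element of $\widetilde{\ba}_\lambda$ to act nontrivially on $w_\lambda$.

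Next I would prove finite generation exactly as in \propref{fin-gen}: $\wt W(\lambda)\subset\lambda-Q^+$ is $W$-invariant, hence finite, so $W(\lambda)$ is spanned over $\ba_\lambda$ by $w_\lambda$ and monomials of bounded length in the $\lie n^-[t]^\sigma$-part; the $\lie{sl}_2[t]$-type relations (the analogue of \lemref{relations}, again from \cite{G78}) together with $(x_{-\a}\otimes 1)^{\lambda(\a^\vee)+1}w_\lambda=0$ bound the degrees of the current root vectors that can occur, and an induction on monomial length using $x_\beta x_{\beta'}=x_{\beta'}x_\beta+[x_\beta,x_{\beta'}]$ finishes it; in particular every $W(\lambda,\bi)$ is finite dimensional. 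The inequality $\dim W(\lambda,\bi_{\lambda,0})\ge\dim W(\lambda,\bi)$ is then the associated-graded argument of \propref{dimineq}: filtering $W(\lambda,\bi)$ by degree, the associated graded $\lie g[t]^\sigma$-module is cyclic, the image of $w_{\lambda,\bi}$ is annihilated by all the $P_{i,r}$, hence it is a quotient of $W(\lambda,\bi_{\lambda,0})$, and dimension is preserved on passing to the associated graded of a finite dimensional module.

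For the lower bound I would follow \propref{weylsl} and \propref{dimineq}. Given a maximal ideal $\bi$, the first step identifies it with scalars $a_{i,r}$; put $\widetilde{\pi}_i(u)=1+\sum_{r=1}^{\lambda(\a_i^\vee)}a_{i,r}u^r$, choose representatives $z_1,\dots,z_k$ for the roots of $\widetilde{\pi}_1\cdots\widetilde{\pi}_n$ modulo multiplication by $m$-th roots of unity, and form $\bpi=(\pi_1,\dots,\pi_{\rank\G})$ and $\mu\in P^+$ with $\lambda-\mu\in P^+$ and $(\lambda-\mu)(\a_i^\vee)=\sum_{j\in\orb(i)}\deg\pi_j$, exactly as in the paragraph preceding \propref{weylsl}. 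Using part~(iii) of the Theorem above on the modules $W(\bpi)$ to factor the $L(\lie g)$-action through $\bigoplus_s\lie g^{z_s,N}$, the surjectivity lemma above (which requires precisely $z_s^m\ne z_p^m$), and the fact that the finite dimensional graded module $W(\mu,\bi_{\mu,0})$ is killed by $\lie g[t]^\sigma$ in high degree, one checks that $w_\mu\otimes w_\bpi$ satisfies the defining relations of $W(\lambda,\bi)$; hence $W(\lambda,\bi)$ surjects onto $W(\mu,\bi_{\mu,0})\otimes\Psi^*_{\boz,N}W(\bpi)$ and, by parts~(i) and~(iv) of that Theorem, $\dim W(\lambda,\bi)\ge\dim W(\mu,\bi_{\mu,0})\prod_i\dim W(\bvpi_i)^{(\lambda-\mu)(\a_i^\vee)}$. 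An induction on $\sum_i\lambda(\a_i^\vee)$, with base case $\mu=0$, then yields $\dim W(\lambda,\bi)\ge\prod_i\dim W(\bvpi_i)^{\lambda(\a_i^\vee)}$.

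I expect the structural parts (the first two steps) to be essentially bookkeeping once the rank-one Garland-type identities are recorded with the correct grade shift $r_i$; the real work, and the main obstacle, is the lower bound. The delicate point is matching an arbitrary, possibly non-graded and non-generic, maximal ideal $\bi$ to a pair $(\mu,\bpi)$ and verifying that $w_\mu\otimes w_\bpi$ satisfies the presentation of $W(\lambda,\bi)$: this is exactly where one needs surjectivity of $\Psi_{\boz,N}$ under the weaker hypothesis $z_s^m\ne z_p^m$, and where the graded factor $W(\mu,\bi_{\mu,0})$ has to be split off to absorb those roots of $\prod_i\widetilde{\pi}_i$ that coincide modulo $m$-th roots of unity.
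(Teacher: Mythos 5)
Your proposal is correct and follows exactly the route the paper intends: the paper's own proof of this theorem consists of the single remark that the arguments of Sections~\ref{five} and~\ref{six} can be emulated, using the analogues of Theorem~\ref{charietal} and Proposition~\ref{piw} recorded just before the statement, and your outline (Garland-type generators $P_{i,r}$ with grade shift $r_i$, the evaluation-module argument for $\ba_\lambda\cong\widetilde{\ba}_\lambda$, finite generation as in Proposition~\ref{fin-gen}, the associated-graded inequality as in Proposition~\ref{dimineq}, and the $(\mu,\bpi)$ splitting with surjectivity of $\Psi_{\boz,N}$ under $z_s^m\ne z_p^m$ as in Proposition~\ref{weylsl}) is precisely that emulation. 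No gaps beyond the level of detail the paper itself leaves implicit.
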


\subsection{}  Finally, the connection between the graded local Weyl modules and the level one Demazure modules for the corresponding affine twisted Lie algebra can be established in the same fashion as described in Section \ref{seven}. In fact, this was already done: for the following statement, part (i) was proved in \cite[Lemma 5.3.]{FK11}, part (ii) was proved in \cite[Theorem 5.1.]{FK11}, and part (iii) was proved in \cite[Theorem 2]{FoL06}.

\begin{thm}\label{gradetwist}  With the notation above, we have
\begin{enumerit}
\item[(i)]   For $1\le i\le n$ we have $$\dim W(\omega_i,\bi_{\omega_i,0})=\dim W(\bvpi_i).$$
\item[(ii)] Let $\lambda\in P^+$. The graded local Weyl module $W(\lambda,\bi_{\lambda,0})$ and the Demazure module $D(-\lambda)$ are isomorphic to as graded $\lie g[t]^\sigma$-modules.
\item[(iii)]  Let $\lambda\in P^+$.  Then, $$\dim D(-\lambda)=\prod_{i=1}^n\dim D(-\omega_i)^{\lambda(\alpha_i^\vee)}.$$\end{enumerit}
\end{thm}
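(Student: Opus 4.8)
The plan is to transpose the argument of Section \ref{seven} to the setting of $\lie g[t]^\sigma$, the key simplification being that the only rank-one current subalgebra which intervenes here is $\lie{sl}_2[t]$: there is no $A_2^{(2)}$-type subalgebra to contend with. All three ingredients are available in the literature, so the proof reduces to checking that the template of Section \ref{seven} applies essentially verbatim, and then bookkeeping the consequences.

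For part (ii) I would first record the analog of Corollary \ref{annihilators}: using Mathieu's presentation of Demazure modules for Kac--Moody algebras \cite{M89}, together with the identification of $\lie g[t]^\sigma$ with (the derived algebra modulo center of) the special maximal parabolic of the relevant twisted affine algebra, the module $D(-\lambda)$ is the graded cyclic $\lie g[t]^\sigma$-module whose degree-zero generator is annihilated by the left ideal generated by $\Ann_{\bu(\lie g[t]^\sigma)}(w_{\lambda,\bi_{\lambda,0}})$ and by the ``extra'' power relations $x_\alpha^{k_\alpha+1}$, $k_\alpha=\max\{0,-(\lambda+\L_0,\alpha^\vee)\}$, where $\alpha$ ranges over $\widehat{R}_{\rm re}(-)\setminus R^-$. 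The heart of the matter is then the analog of Proposition \ref{higherrank}: these extra relations already hold on $w_{\lambda,\bi_{\lambda,0}}$ in the graded local Weyl module. I would prove this by reduction to rank one exactly as in Section \ref{seven}: for every root $\alpha$ of $\G^\sigma$ the span of the root spaces indexed by $(\pm\alpha+\bz\delta)\cap\widehat{R}$ is a subalgebra of $\lie g[t]^\sigma$ isomorphic to $\lie{sl}_2[t]$ (up to a rescaling of the grading depending on whether $\alpha$ is long or short, and on $m$), so the required power relation is \cite[Corollary 1.5.1]{CL06}, transported along the inclusion of enveloping algebras and the evident inclusion of annihilators of the highest weight Weyl vectors. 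Once the annihilators are matched, $W(\lambda,\bi_{\lambda,0})$ and $D(-\lambda)$ are graded cyclic modules with identically annihilated degree-zero generators, hence isomorphic as graded $\lie g[t]^\sigma$-modules; this is \cite[Theorem 5.1]{FK11}.

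For part (iii) I would invoke \cite[Theorem 2]{FoL06}: level one Demazure modules of twisted affine algebras fall within the scope of that theorem, which gives $D(-\lambda)\cong\bigotimes_{i=1}^n D(-\omega_i)^{\otimes\lambda(\alpha_i^\vee)}$ as graded $\G^\sigma$-modules, and taking dimensions yields the product formula. For part (i), applying (ii) with $\lambda=\omega_i$ gives $W(\omega_i,\bi_{\omega_i,0})\cong D(-\omega_i)$ as graded modules, so it remains to identify $\dim D(-\omega_i)$ with $\dim W(\bvpi_i)$; this is \cite[Lemma 5.3]{FK11}, where the two modules are shown to share the same $\G^\sigma$-character. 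Alternatively, one has the inequality $\dim W(\omega_i,\bi_{\omega_i,0})\ge\dim W(\bvpi_i)$ from Theorem \ref{thm8}, and the reverse from decomposing $D(-\omega_i)$ as a $\G^\sigma$-module into a short list of $V(\mu)$'s that match those appearing in the restriction of $W(\bvpi_i)$.

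Finally, feeding (i) into (iii) gives $\dim D(-\lambda)=\prod_{i=1}^n\dim W(\bvpi_i)^{\lambda(\alpha_i^\vee)}$; by (ii) this equals $\dim W(\lambda,\bi_{\lambda,0})$, and comparing with the lower bound of Theorem \ref{thm8} forces $\dim W(\lambda,\bi)=\prod_{i=1}^n\dim W(\bvpi_i)^{\lambda(\alpha_i^\vee)}$ for every maximal ideal $\bi$, whence (arguing as for Theorem \ref{freeness}) $W(\lambda)$ is a free $\ba_\lambda$-module of that rank. I expect the only genuinely delicate point to be the rank-one reduction of the extra Demazure relations in part (ii) — the same obstacle as in the $A_{2n}^{(2)}$ case — but here it is substantially easier, since only $\lie{sl}_2[t]$ is involved and the relevant fact \cite[Corollary 1.5.1]{CL06} is already at hand; the one thing requiring care is the combinatorics of sorting the affine real roots $\widehat{R}_{\rm re}(-)\setminus R^-$ into the appropriate $\lie{sl}_2[t]$-subalgebras for each twisted type, including the order-three case $D_4^{(3)}$.
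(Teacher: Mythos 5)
Your proposal is correct and takes essentially the same approach as the paper: the paper observes that the connection can be established exactly as in Section~\ref{seven} and then cites \cite[Lemma 5.3]{FK11} for (i), \cite[Theorem 5.1]{FK11} for (ii), and \cite[Theorem 2]{FoL06} for (iii) --- the same references you invoke, with your rank-one reduction to $\lie{sl}_2[t]$ being precisely the Section~\ref{seven}-style argument the paper alludes to. (Your closing remarks on dimensions of arbitrary local Weyl modules and freeness go beyond the stated theorem, but they match Theorems~\ref{thm10} and~\ref{thm11}.)
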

As a consequence of Theorem \ref{thm8} and Theorem \ref{gradetwist} we obtain the following.
\begin{thm} \label{thm10}
Let $\lambda\in P^+$. For any maximal ideal $\bi$ of $\ba_\lambda$, we have 
$$
\dim W(\lambda,\bi)=\prod_{i=1}^n\dim W(\omega_i,\bi_{\omega_i,0})^{\lambda(\alpha_i^\vee)}.
$$
\end{thm}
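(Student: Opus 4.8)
The plan is to obtain the equality by squeezing the dimension of $W(\lambda,\bi)$ between the two bounds furnished by Theorem~\ref{thm8} and showing that these bounds coincide by means of Theorem~\ref{gradetwist}. First I would record the consequences of Theorem~\ref{gradetwist}. Applying part (ii) to $\lambda$ and then part (iii) gives
$$\dim W(\lambda,\bi_{\lambda,0})=\dim D(-\lambda)=\prod_{i=1}^n\dim D(-\omega_i)^{\lambda(\alpha_i^\vee)},$$
while applying part (ii) to each fundamental weight $\omega_i$ gives $\dim W(\omega_i,\bi_{\omega_i,0})=\dim D(-\omega_i)$. Substituting the latter into the former yields
$$\dim W(\lambda,\bi_{\lambda,0})=\prod_{i=1}^n\dim W(\omega_i,\bi_{\omega_i,0})^{\lambda(\alpha_i^\vee)}.$$

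Next I would feed this identity into the chain of inequalities of Theorem~\ref{thm8}. The lower bound there reads $\dim W(\lambda,\bi)\ge\prod_{i=1}^n\dim W(\bvpi_i)^{\lambda(\alpha_i^\vee)}$, and by Theorem~\ref{gradetwist}(i) each factor $\dim W(\bvpi_i)$ equals $\dim W(\omega_i,\bi_{\omega_i,0})$, so the right-hand side is exactly $\dim W(\lambda,\bi_{\lambda,0})$ by the previous paragraph. On the other hand, the upper bound in Theorem~\ref{thm8} asserts $\dim W(\lambda,\bi_{\lambda,0})\ge\dim W(\lambda,\bi)$. Combining, $\dim W(\lambda,\bi_{\lambda,0})\ge\dim W(\lambda,\bi)\ge\dim W(\lambda,\bi_{\lambda,0})$, forcing equality throughout; in particular $\dim W(\lambda,\bi)=\prod_{i=1}^n\dim W(\omega_i,\bi_{\omega_i,0})^{\lambda(\alpha_i^\vee)}$, which is the assertion.

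Since every substantive ingredient has already been established — the polynomiality of $\ba_\lambda$, the finite generation of $W(\lambda)$ over $\ba_\lambda$ (which in particular guarantees that $W(\lambda,\bi)$ is finite dimensional, so the statement is not vacuous) and the resulting two-sided estimate in Theorem~\ref{thm8}, together with the Demazure identification and its tensor-product factorization in Theorem~\ref{gradetwist} — there is essentially no obstacle remaining: the proof is a purely formal \emph{squeezing} argument. The only care required is bookkeeping about which earlier results are being applied to $\lambda$ and which to the fundamental weights $\omega_i$, together with the observation that the coincidence of the extreme terms is precisely what makes $\dim W(\lambda,\bi)$ independent of the maximal ideal $\bi$.
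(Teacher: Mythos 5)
Your proposal is correct and is essentially the paper's own argument: the paper deduces Theorem~\ref{thm10} directly from Theorem~\ref{thm8} and Theorem~\ref{gradetwist} by exactly this squeeze, using part (ii)--(iii) to evaluate $\dim W(\lambda,\bi_{\lambda,0})$ via the Demazure module and part (i) to match the lower bound $\prod_{i=1}^n\dim W(\bvpi_i)^{\lambda(\alpha_i^\vee)}$ with $\prod_{i=1}^n\dim W(\omega_i,\bi_{\omega_i,0})^{\lambda(\alpha_i^\vee)}$. No further comment is needed.
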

To conclude, we have proved the following.
\begin{thm} \label{thm11}
Let $\lambda\in P^+$. The global Weyl module $W(\lambda)$ is a free $\ba_\lambda$-module of rank equal to 
$$\prod_{i=1}^n\dim W(\omega_i,\bi_{\omega_i,0})^{\lambda(\alpha_i^\vee)}.$$
\end{thm}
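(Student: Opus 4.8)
The plan is to obtain Theorem~\ref{thm11} from Theorem~\ref{thm8} and Theorem~\ref{thm10} by the standard freeness argument, exactly as Theorem~\ref{freeness} is obtained from Theorem~\ref{alambda} and Theorem~\ref{mainthm}. Write $B=\ba_\lambda$, $W=W(\lambda)$, and $d=\prod_{i=1}^n\dim W(\bvpi_i)^{\lambda(\alpha_i^\vee)}$. By Theorem~\ref{thm8}, $B$ is a $\bz_+$--graded polynomial ring with $B[0]=\bc$ and $W$ is a finitely generated graded $B$--module; by Theorem~\ref{thm10}, $\dim_\bc\bigl(W\otimes_B B/\bi\bigr)=d$ for \emph{every} maximal ideal $\bi$ of $B$. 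We must show that $W$ is $B$--free of rank $d$.

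First I would build a graded presentation. By the graded Nakayama lemma, a homogeneous lift of a basis of $W\otimes_B\bc=W(\lambda,\bi_{\lambda,0})$ produces a graded surjection $\pi\colon F\to W$ with $F$ a finitely generated free graded $B$--module satisfying $F\otimes_B\bc\cong W(\lambda,\bi_{\lambda,0})$ as graded vector spaces; in particular $F$ has rank $d$. Put $K=\ker\pi$, a finitely generated graded $B$--module. Since $W\cong F$ once $K=0$, it suffices to prove $K=0$.

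Next I would show that the fibre--dimension function $\mathfrak p\mapsto\dim_{\kappa(\mathfrak p)}\bigl(W\otimes_B\kappa(\mathfrak p)\bigr)$ is identically equal to $d$ on $\operatorname{Spec}B$. It is upper semicontinuous, so $\{\mathfrak p:\dim\geq d+1\}$ is closed and $\{\mathfrak p:\dim\leq d-1\}$ is open, hence both are locally closed; by Theorem~\ref{thm10} neither contains a closed point, and since $B$, being a finitely generated $\bc$--algebra, is a Jacobson ring, every nonempty locally closed subset of $\operatorname{Spec}B$ contains a closed point. Therefore both loci are empty, the fibre dimension is constant equal to $d$, and in particular $\dim_{\operatorname{Frac}B}\bigl(W\otimes_B\operatorname{Frac}B\bigr)=d$. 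Tensoring $0\to K\to F\to W\to 0$ with the flat $B$--module $\operatorname{Frac}B$ now yields an exact sequence $0\to K\otimes_B\operatorname{Frac}B\to(\operatorname{Frac}B)^{\oplus d}\to W\otimes_B\operatorname{Frac}B\to 0$ whose two outer terms have dimension $d$, so $K\otimes_B\operatorname{Frac}B=0$; thus $K$ is a torsion $B$--module, and as a submodule of the torsion--free module $F$ it vanishes. Hence $W\cong F$ is free of rank $d$, and the same chain of arguments with Theorem~\ref{alambda} and Theorem~\ref{mainthm} in place of Theorem~\ref{thm8} and Theorem~\ref{thm10} proves Theorem~\ref{freeness}.

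The point to emphasize is that in this last step there is essentially no obstacle: everything is formal once the two inputs are available. The genuine content lies upstream, in Theorem~\ref{thm10}, which marries the lower bound $\dim W(\lambda,\bi)\geq\prod_i\dim W(\bvpi_i)^{\lambda(\alpha_i^\vee)}$ --- coming from the surjections $W(\lambda,\bi_{\bpi})\twoheadrightarrow\Psi^*_{\boz,N}W(\bpi)$ (the analogue of Proposition~\ref{piw}) together with the associated--graded comparison of Section~\ref{six} --- with the matching upper bound $\dim W(\lambda,\bi)\leq\dim W(\lambda,\bi_{\lambda,0})=\dim D(-\lambda)$ furnished by the identification of the graded local Weyl module with the level one Demazure module (Theorem~\ref{gradetwist}) and its tensor--product decomposition. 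Within the argument above, the only place that needs slight care is upgrading constancy of the fibre dimension from maximal ideals to all of $\operatorname{Spec}B$ --- this is where the Jacobson property enters --- and the bookkeeping of graded shifts in the Nakayama step.
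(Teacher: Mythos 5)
Your proposal is correct and follows essentially the same route as the paper, which obtains Theorem~\ref{thm11} directly from Theorem~\ref{thm8} and Theorem~\ref{thm10} as a formal consequence (just as Theorem~\ref{freeness} follows from Theorems~\ref{alambda} and~\ref{mainthm}), leaving the standard commutative algebra implicit. Your graded Nakayama presentation, the semicontinuity-plus-Jacobson argument upgrading constancy of fibre dimension from maximal ideals to all of $\operatorname{Spec}\ba_\lambda$, and the torsion-freeness step over the polynomial (hence integral) domain $\ba_\lambda$ are exactly the details that make this deduction rigorous, and they check out (note only that your $d=\prod_i\dim W(\bvpi_i)^{\lambda(\alpha_i^\vee)}$ agrees with the stated rank via Theorem~\ref{gradetwist}(i)).
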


\begin{rem}
Although, in this section, we have excluded from consideration the case of $\G$ of type $A_{2n}$ most of the statements are valid in that situation too. The first statement that fails to be true in this case is Theorem \ref{gradetwist}(ii), and in fact, for $\G$ of type $A_{2n}$, the dimension of a local Weyl module $W(\lambda,\bi_{\lambda,0})$ for $\G[t]^\sigma$ is not known if $\lambda(\alpha_n^{\vee})$ is even.  There are examples in \cite{FK11} which show that the dimension of a graded  local Weyl module can be bigger than the dimension of the corresponding Demazure module.
\end{rem}

\bibliographystyle{plain}
\bibliography{cik-biblist}
\end{document}